\documentclass[10pt,reqno]{amsart}
\usepackage {amssymb}
\usepackage {amsmath}
\usepackage {bbm}
\usepackage{amsthm}
\usepackage{mathtools}
\usepackage{graphicx}
\usepackage {amscd}
\usepackage {epic}
\usepackage[mathscr]{euscript}

\usepackage[dvipsnames]{xcolor}
\usepackage[colorlinks,citecolor=OliveGreen,linkcolor=Mahogany,urlcolor=Plum,pagebackref]{hyperref}
\usepackage[alphabetic]{amsrefs}

\usepackage{enumerate}
\usepackage{tikz}
\usepackage{tikz-cd}
\usepackage{verbatim,color,geometry}
\usepackage[all]{xy}
\usepackage{enumitem}
\usepackage{bm}
\geometry{centering,vcentering,marginratio=1:1,vscale=0.8,hscale=0.78}

\newcommand{\A}{\mathbb{A}}
\newcommand{\G}{\mathbb{G}}
\newcommand{\K}{\mathbb{K}}
\newcommand{\N}{\mathbb{N}}
\newcommand{\PP}{\mathbb{P}}
\newcommand{\Q}{\mathbb{Q}}
\newcommand{\R}{\mathbb{R}}
\newcommand{\T}{\mathbb{T}}
\newcommand{\Z}{\mathbb{Z}}

\newcommand{\cL}{{\mathcal{L}}}

\newcommand{\cF}{{\mathcal{F}}}
\newcommand{\cM}{{\mathcal{M}}}

\newcommand{\cX}{{\mathcal{X}}}

\newcommand{\fX}{{\mathfrak{X}}}
\newcommand{\fL}{{\mathfrak{L}}}

\newcommand{\fD}{{\mathfrak{D}}}

\newcommand{\bH}{{\mathbf{H}}}
\newcommand{\bP}{{\mathbf{P}}}


\newcommand{\Aut}[0]{\operatorname{Aut}}

\newcommand{\Supp}{{\rm Supp}}
\newcommand{\Hom}{{\rm Hom}}

\newcommand{\ord}{{\rm ord}}
\newcommand{\gr}{{\rm gr}}

\numberwithin{equation}{section}
\newtheorem{thm}{Theorem}[section]

\newtheorem{lem}[thm]{Lemma}
\newtheorem{cor}[thm]{Corollary}

\newtheorem{prop}[thm]{Proposition}

\newtheorem{conj}[thm]{Conjecture}

\theoremstyle{definition}
\newtheorem{defn}[thm]{Definition}

\newtheorem{example}[thm]{Example}


\newtheorem{rem}[thm]{Remark}

\newtheorem{defn-thm}[thm]{Definition--Theorem}  
\newtheorem{defn-prop}[thm]{Definition--Proposition}  
\newtheorem{defn-lem}[thm]{Definition--Lemma}  

\theoremstyle{remark}


\newcommand{\Fut}{{\rm Fut}}

\newcommand{\vol}{{\rm vol}}

\newcommand{\cD}{{\mathcal{D}}}
\newcommand{\cG}{{\mathcal{G}}}
\newcommand{\cO}{{\mathcal{O}}}

\newcommand{\bQ}{{\mathbb{Q}}}


\newcommand{\Spec}{{\rm Spec}}

\newcommand{\la}{\lambda}

\newcommand{\bmu}{\bm{\mu}}

\newcommand{\Gal}{{ \rm Gal}}
\newcommand{\PGL}{{ \rm PGL}}
\newcommand{\Map}{{ \rm Map}}
\newcommand{\Hilb}{{ \rm Hilb}}
\newcommand{\lnorm}[1]{\lVert#1\rVert_2}
\newcommand{\mnorm}[1]{\lVert#1\rVert_{\rm m}}


\newcommand{\bR}{\mathbb{R}}
\DeclareMathOperator{\Filt}{Filt}


\newcommand{\HL}[1]{{\textcolor{red}{[Daniel: #1]}}}
\newcommand{\CX}[1]{{\textcolor{red}{[Chenyang: #1]}}}


\newcommand{\sX}{\mathscr{X}}
\newcommand{\sS}{\mathscr{S}}
\newcommand{\uMap}{\underline{\Map}}
\newcommand{\bfm}{\mathbf{m}}
\newcommand{\prim}{\mathrm{prim}}

\title[Properness of K-moduli and optimal degenerations]{On properness of K-moduli spaces and optimal degenerations of Fano varieties}

\subjclass[2010]{14D20, 14J45}
\keywords{Fano varieties, K-stability, moduli}

\author{Harold Blum}
\address{Department of Mathematics, Stony Brook University, Stony Brook, NY 11794, USA}
\email{harold.blum@stonybrook.edu}

\author{Daniel Halpern-Leistner}
\address{Department of Mathematics, Malott Hall, Cornell University, Ithaca, NY 14853, USA}
\email{daniel.hl@cornell.edu}

\author{Yuchen Liu}
\address{Department of Mathematics, Northwestern University, Evanston, IL 60208, USA}
\email{yuchenl@northwestern.edu}

\author{Chenyang Xu}
\address{Department of Mathematics, Princeton University, Princeton, NJ 08544, USA}
\email     {chenyang@princeton.edu}
\address   {Department of Mathematics, MIT, Cambridge, MA 02139, USA}
\email     {cyxu@math.mit.edu}
\address   {Beijing International Center for Mathematical Research,       Beijing 100871, China}
\email     {cyxu@math.pku.edu.cn}


\begin{document}

\pagenumbering{arabic}

\begin{abstract}
We establish an algebraic approach to prove the properness of moduli spaces of K-polystable Fano varieties and reduce the problem to a conjecture on destabilizations of K-unstable Fano varieties.
Specifically, we prove that if the stability threshold of every K-unstable Fano variety is computed by a divisorial valuation, then such K-moduli spaces are proper. 
The argument relies on studying certain optimal destabilizing test configurations and constructing a $\Theta$-stratification on the moduli stack of Fano varieties.
\end{abstract}

\maketitle

\section{Introduction}

A key feature of the K-stability theory for Fano varieties, which was probably once beyond the imagination of algebraic geometers, is that it yields a good theory for moduli spaces. There has been significant progress in the recent years to give a purely algebro-geometric construction of such moduli spaces, called {\it K-moduli spaces}. Specifically, it has been shown that there is a finite type Artin stack  $\cM_{n,V}^{\rm Kss}$  parametrizing families of K-semistable Fano varieties of dimension $n$ and volume $V$ and the stack  admits a  morphism to a separated good moduli space $M_{n,V}^{\rm Kps}$ whose points parameterize those that are K-polystable. See \cites{Jia17, BX19, ABHLX19, BLX19, Xu20,  XZ20}. 

The remaining major challenge is to show that the moduli space $M_{n,V}^{\rm Kps}$ is proper. This is known for the component which parametrizes $\Q$-Gorenstein smoothable K-polystable Fano varieties (see \cite{LWX19}), whose proof essentially relies on analytic results in \cite{DS14, TW19}. In this note, we aim to establish an algebraic strategy to prove the properness. More precisely, we will show that it is implied by the existence of certain optimal destabilizing degenerations. 

We will follow a strategy sometimes called Langton's algorithm. Historically, Langton proved the valuative criterion of properness for the moduli space of polystable sheaves on a smooth projective variety $X$ of arbitrary dimension (see \cite{Lan75}). Starting with a semistable sheaf $F_K$ on $X\times {\rm Spec}(K)$, where $K$ is the fraction field of a DVR $R$, Langton shows that one can modify any degeneration $F_\kappa$ of $F_K$ on $X\times  {\rm Spec}(\kappa)$, where $\kappa$ is the residue field of $R$, with a sequence of uniquely determined elementary transformations, so that the `instability' of $F_\kappa$ decreases. Moreover, he showed that after finitely many steps this process terminates with the degeneration becoming semistable. This method was 
adapted to $G$-bundles on curves in \cite{Hein08} and finally abstracted in \cite{AHLH18}, where it is shown that Langton's algorithm can be carried out on an Artin stack as long as it admits a {\it $\Theta$-Stratification} (see Definition \ref{d:ThetaStrat}).

The notion of a $\Theta$-stratification was defined in \cite{HL14} to generalize the Harder-Narasimhan stratification of the moduli of coherent sheaves on a projective scheme as well as the Kempf-Ness stratification in GIT.
The definition is based on the idea that 
(i) the stability of a point $x$ in a stack $\sX$ is determined by considering maps $f:[\A^1/\G_m] \to \sX$ such that $f(1)=x$, (ii) if $x$ is unstable, then there should be a unique optimal destabilizing map, and (iii) these optimal destabilizations should satisfy certain properties in families and can be used to stratify the unstable locus of the stack.
When $\sX$ is the moduli stack of coherent sheaves on a projective scheme and $E$ a coherent sheaf on that scheme, 
maps $[\A^1/\G_m] \to \sX$ sending $1\to [E]$ are in bijection with 
filtrations of $E$, and when $E$ is unstable, the optimal destabilization is given by the  Harder-Narasimhan filtration.

In this paper, we seek to define a $\Theta$-stratification on the stack
$\cM^{\rm Fano}_{n,V}$ parametrizing  families of $\Q$-Fano varieties of dimension $n$ and volume $V$.
Since maps $[\A^1/\G_m] \to \cM^{\rm Fano}_{n,V}$  such that $f(1) =[X]$ are equivalent to special test configurations of $X$, we must identify a unique optimal destabilizing test configuration for each K-unstable Fano variety $X$.

A natural starting point is to normalize the Futaki invariant of a test configuration by a norm and hope to find a unique test configuration minimizing the invariant.
It is expected that if we normalize by the minimum norm, then the infimum
\begin{equation}\label{E:infimum}
\displaystyle \inf_{\cX} \frac{\Fut(\cX)}{ \mnorm {\cX} }\end{equation} that runs through all non-trivial special test configurations $\cX$ of $X$ is attainable. 
This infimum is closely related to the stability threshold of $X$, denoted $\delta(X)$, defined and studied in \cite{FO18,BJ20}. Indeed, by \cite{BLZ19}, the previous infimum equals $\delta(X)-1$ and is achieved if and only if a well known conjecture (see Conjecture \ref{c:optdest})  regarding valuations computing $\delta(X)$ holds. 

However, unlike the optimal destabilizing degeneration in Geometric Invariant Theory \cite{Kem78}, a degeneration achieving the infimum \eqref{E:infimum} need not be unique. Thus, only considering $ \frac{\Fut(\cX)}{ \mnorm {\cX} }$ is not enough to define a $\Theta$-stratification. To remedy this, we consider the set of test configurations achieving the above infimum and  minimize the Futaki invariant normalized by the $L^2$ norm among these test configurations. This amounts to minimizing the bi-valued invariant 
$\Big( \frac{\Fut(\cX)}{ \mnorm {\cX} }, \frac{\Fut(\cX  )}{ \lnorm{\cX}}\Big)$
with respect to the lexicographic order on $(\R\cup\{\pm\infty\})^2$.
Assuming Conjecture \ref{c:optdest} holds, we verify that there exists a unique special test configuration minimizing this function and that these minimizing test configurations define a {\it well-ordered} $\Theta$-stratification on $\cM_{n,V}^{\rm Fano}$ (see Definition \ref{d:ThetaStrat}).

\subsection{Main results}\label{S:main_results}
Below, we will give a more detailed description of our main results, which are stated in the more general setting of log Fano pairs, rather than $\Q$-Fano varieties.

 For a special test configuration $(\cX,\cD)$ of a log Fano pair $(X,D)$, we set
\[
\bmu \left( \cX,\cD \right) 
:=
\left( \frac{\Fut(\cX , \cD )}{ \mnorm {\cX,\cD} }, \frac{\Fut(\cX , \cD )}{ \lnorm{\cX,\cD}}\right) \in \R^2.
\]
When $(X,D)$ is K-unstable, we set
\begin{equation}\label{eq:M=infmu}
M^{\bmu} (X,D) : = \inf_{ (\cX,\cD)} \bmu(\cX,\cD)
,
\end{equation}
where the infimum runs through special test configurations of $(X,D)$.
Note that by  \cite{BLZ19},
$\inf_{ (\cX,\cD)}  \frac{ \Fut( \cX,\cD) }{ \mnorm{\cX,\cD}}$ is equal to $\delta(X,D)-1$ and the infimum is a minimum if the following conjecture holds.

\begin{conj}[Optimal Destabilization]\label{c:optdest}
If $(X,D)$ is a K-unstable log Fano pair, then there exists a divisor $E$ over $X$ computing the infimum 
$$
\delta(X,D):=\inf_{E} \frac{A_{X,D}(E)}{S_{X,D}(E)}
.$$
\end{conj}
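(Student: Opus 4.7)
The plan is to reduce Conjecture \ref{c:optdest} to a finite generation statement for the graded algebra associated to a valuation, mimicking the strategy that has succeeded in the K-semistable setting and for minimizers of the normalized volume at klt singularities.

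First, I would extract a quasi-monomial minimizer of $A_{X,D}/S_{X,D}$. By the reduction of Blum--Jonsson, the infimum defining $\delta(X,D)$ can be computed over quasi-monomial valuations on a log smooth model $Y\to X$. On each stratum of $Y$ the set of such valuations forms a rational simplex, and lower semicontinuity of $A_{X,D}/S_{X,D}$ on this simplex, together with a compactness argument after normalizing $A_{X,D}(v)=1$ and passing through a tower of increasingly refined log smooth models, should produce a quasi-monomial valuation $v_*$ with $A_{X,D}(v_*)/S_{X,D}(v_*)=\delta(X,D)$.

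The crucial step, and the main obstacle, is to show that the associated graded algebra $\gr_{v_*} R := \bigoplus_{m,\lambda}\{s \in R_m : v_*(s) \geq \lambda\}/\{s \in R_m : v_*(s) > \lambda\}$ is finitely generated, where $R=\bigoplus_m H^0(X,-m(K_X+D))$. Here one would attempt to exploit the minimality of $v_*$, together with BCHM and the theory of special degenerations of log Fano pairs, to force a polyhedral structure on the Newton--Okounkov body of $v_*$. Intuitively, a minimizer of $A/S$ should be rigid enough that its filtration is detected by only finitely many sections, but the known techniques from the K-semistable case do not transport verbatim: the absence of a canonical filtration of the anticanonical ring (as opposed to a local graded ring at a klt singularity) means one must produce the rigidity globally.

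Granting finite generation, the Rees algebra construction yields a non-trivial degeneration of $(X,D)$ to a log Fano pair $(X_0,D_0)$ equipped with a $\G_m$-action. The order of vanishing along the central fiber defines a divisorial valuation on $X$ whose ratio $A/S$ matches that of $v_*$, by the invariance of $A_{X,D}$ and $S_{X,D}$ under the resulting degeneration. This would produce the divisor $E$ demanded by the conjecture. As noted, the principal open piece is the finite generation step, which is widely regarded as the central structural problem underlying $\delta$-minimizers for K-unstable log Fano pairs and is precisely the reason Conjecture \ref{c:optdest} is left open in this paper.
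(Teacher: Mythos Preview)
The statement you are addressing is labeled as a \emph{conjecture} in the paper and is not proved there; the paper assumes it as a hypothesis for its main theorems. There is therefore no ``paper's own proof'' to compare against. What the paper does supply is exactly the reduction you sketch: immediately after stating the conjecture it records that a quasi-monomial minimizer of $A_{X,D}/S_{X,D}$ always exists (citing \cite{BJ20,BLX19,Xu20}), and that by \cite{Xu20b} the conjecture would follow from finite generation of the associated graded ring of such a minimizer on $\bigoplus_m H^0(-mr(K_X+D))$.

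Your outline tracks this known reduction faithfully, and you are explicit that the finite generation step is the genuine gap. That is accurate: everything in your plan up to and after that step is already in the literature the paper cites, and the step itself is the open problem. So what you have written is not a proof but a correct summary of the state of the art; it neither advances beyond nor conflicts with the paper's own discussion of the conjecture.
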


While the above conjecture remains open, it is known that there exists a  quasi-monomial valuation, rather than divisorial valuation, achieving the infimum (see \cite{BJ20,BLX19,Xu20}).
By \cite{Xu20b}, Conjecture \ref{c:optdest} would hold
if one knew the finite generation of the associated graded ring induced by a quasi-monomial valuation which computes $\delta(X,D)$ on the ring $\bigoplus_{m\in \mathbb N} H^0(-mr(K_X+\Delta))$ for a sufficiently divisible positive integer $r$.

We prove the following result on test configurations achieving the infimum in  \eqref{eq:M=infmu}.

\begin{thm}\label{t:HNfilts}
Let $(X,D)$ be a log Fano pair that is K-unstable. 
\begin{itemize}
\item[(1)] (Existence)  The  pair  $(X,D)$ satisfies the conclusion of Conjecture \ref{c:optdest} if and only if there exists a special test configuration achieving the infimum in  \eqref{eq:M=infmu}.
  \item[(2)] (Uniqueness)  Any two special test configurations of $(X,D)$ achieving the infimum in \eqref{eq:M=infmu} are isomorphic after (possible) rescaling.
\end{itemize}
\end{thm}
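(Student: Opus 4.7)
The plan is to treat the two lexicographic slots of $\bmu$ separately, exploiting the identification from \cite{BLZ19}
\[
\frac{\Fut(\cX,\cD)}{\mnorm{\cX,\cD}} = \frac{A_{X,D}(E_\cX)}{S_{X,D}(E_\cX)} - 1,
\]
where $E_\cX$ denotes the divisorial valuation on $X$ induced by the central fiber of a special test configuration $(\cX,\cD)$. The reverse direction of part (1) is then immediate: if $(\cX,\cD)$ attains the lexicographic infimum $M^{\bmu}(X,D)$, its first coordinate is forced to equal $\delta(X,D) - 1$, so the identity shows $E_\cX$ computes $\delta(X,D)$ and the conclusion of Conjecture \ref{c:optdest} holds.

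For the forward direction of part (1), I would start with a divisor $E$ computing $\delta(X,D)$. By the finite-generation results of \cite{Xu20b} (and the related input in \cite{BLX19}), the associated graded ring of $E$ is finitely generated, producing a weakly special test configuration whose central fiber recovers $E$ (up to rescaling). Running the K-stability MMP of \cite{BX19}, which cannot increase the first slot, degenerates this to a special test configuration $(\cX,\cD)$ with $\Fut/\mnorm = \delta - 1$. Thus the set of special test configurations attaining the first-slot infimum is nonempty, and within it one seeks existence and uniqueness of a minimizer for the second slot.

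The structural input I would rely on is that any two special test configurations $(\cX_i,\cD_i)$ attaining $\Fut/\mnorm=\delta-1$ admit, after a common positive rescaling, a common central fiber $(\cX_0,\cD_0)$, and so correspond (up to scaling) to cocharacters in $N := \Hom(\G_m, T)$ for a maximal torus $T \subseteq \Aut(\cX_0,\cD_0)$. Under this identification, $\Fut$ extends to a linear functional and $\lnorm{\cdot}^2$ to a positive-definite quadratic form on $N_{\R}$, both invariant under positive rescaling. The locus where $\Fut/\mnorm$ attains its infimum $\delta-1$ defines a rational polyhedral subcone of $N_{\R}$, and on it $\Fut/\lnorm$ is the ratio of a fixed linear functional to a Euclidean length. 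Strict convexity of $\lnorm{\cdot}^2$ then yields a unique minimizing ray, simultaneously proving existence in (1) and uniqueness up to rescaling in (2).

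The main obstacle is the structural claim in the previous paragraph: that all special test configurations attaining the first-slot infimum share, up to rescaling, a common central fiber and are parametrized by a cocharacter lattice in its automorphism group. Establishing this will require combining uniqueness of the degeneration produced by a valuation with finitely generated graded ring, together with the rigidity of divisors computing $\delta$, namely that any two are $T$-equivalent on the common central fiber; both ingredients ultimately rest on the MMP-for-test-configurations framework of \cite{BX19} and the $\delta$-minimizer theory of \cite{BLX19, Xu20b}. Once this is in place, the convex-geometric conclusion is parallel to Kempf's proof of uniqueness of the optimal one-parameter subgroup in GIT.
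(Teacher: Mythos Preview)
Your proposal correctly identifies the two-step structure (first minimize $\mu_1$, then $\mu_2$ on the $\mu_1$-minimizing locus) and the role of convexity of $\lnorm{\cdot}^2$, but the central structural claim you flag as ``the main obstacle'' is where the argument breaks down, and your proposed route to it does not work.

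\medskip

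\textbf{The gap.} You assert that any two special test configurations attaining $\mu_1=\delta-1$ share, up to rescaling, a common central fiber $(\cX_0,\cD_0)$, so that all such test configurations are parametrized by cocharacters of a single torus $T\subset\Aut(\cX_0,\cD_0)$. This is not true in general: two $\mu_1$-minimizers can have non-isomorphic central fibers (think of the infinitely many rulings on $\PP(1,1,m)$, each of which gives a $\delta$-computing divisor and hence a $\mu_1$-minimizing test configuration, but with different central fibers). What \cite{BLZ19} actually provides is weaker but sufficient: given any \emph{pair} of $\mu_1$-minimizing special test configurations $(\cX^1,\cD^1)$ and $(\cX^2,\cD^2)$, there is a $\G_m^2$-equivariant family of log Fano pairs $(\fX,\fD)\to\A^2$ restricting to the two test configurations along the coordinate axes. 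The fiber $(\fX_{\bf 0},\fD_{\bf 0})$ over the origin carries a $\G_m^2$-action, and it is on this two-dimensional cocharacter space that one runs the convexity argument: both $(1,0)$ and $(0,1)$ minimize $\bmu$ on the positive quadrant, and strict quasi-convexity of $\mu_2$ forces the map $\G_m^2\to\Aut(\fX_{\bf 0},\fD_{\bf 0})$ to have positive-dimensional kernel. Linearity of $\Fut$ then pins the kernel to the antidiagonal $\{(t,t^{-1})\}$, and a direct computation with the two induced filtrations of the section ring (via a compatible basis) shows they coincide. This is pairwise, not global, and does not require a common central fiber for all minimizers.

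\medskip

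\textbf{On existence.} Your forward direction of (1) also has a gap. Appealing to \cite{Xu20b} for finite generation of the graded ring of a $\delta$-computing divisor is circular: that finite generation is precisely the open content equivalent to the conjecture. What \cite{BLZ19} gives you directly (their Theorem~1.1) is that a divisor computing $\delta$ yields a special test configuration with $\mu_1=\delta-1$; no MMP or finite-generation step is needed here. The remaining issue is then to show that \emph{some} $\mu_1$-minimizer also minimizes $\mu_2$. Your torus-on-a-common-central-fiber picture would give this, but since that picture is unavailable, a different argument is needed. The paper instead embeds everything in a Hilbert-scheme-type parameter space $Z$ with a $\PGL$-action, so that every relevant test configuration arises from a one-parameter subgroup of $\PGL$; after passing to a fixed maximal torus, the function $v\mapsto\bmu(z,v)$ is locally constant in $z$ along a finite stratification, and on each stratum the convex geometry (now over a genuine rational polyhedral fan coming from the torus action on projective space) produces a rational minimizer. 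This simultaneously gives existence of the $\bmu$-minimizer and constructibility of $M^{\bmu}$.
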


To prove Theorem \ref{t:HNfilts}, we use that any two test configurations which minimize $\frac{ \Fut}{\mnorm{\,\,}}$ can be connected by an equivariant family over $\mathbb{A}^2$ following \cite{BLZ19}. With this in hand, we can analyze $\frac{ \Fut}{\lnorm{\,\,}}$ and conclude the uniqueness result. 

By analyzing the properties of these optimal destabilizing test configurations in families, we strengthen the theorem above by showing that $\mu$ determines a $\Theta$-stratification  on  $\cM_{n,V,c}^{\rm Fano}$, which denotes the moduli stack  parametrizing families of log Fano pairs with fixed numerical invariants (see Definition \ref{d-logfanofamily}). We give a direct construction of this $\Theta$-stratification, and we also discuss how this follows from the general theory of $\Theta$-stability developed in \cite{HL14}.

\begin{thm}\label{t-theta}
If Conjecture \ref{c:optdest} holds,
then $\bm{\mu}$ determines a $\Theta$-stratification on $\cM_{n,V,c}^{\rm Fano}$. 
\end{thm}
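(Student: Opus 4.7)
The plan is to verify each axiom of a $\Theta$-stratification (Definition \ref{d:ThetaStrat}) for the invariant $\bm{\mu}$ on $\cM_{n,V,c}^{\rm Fano}$, using Theorem \ref{t:HNfilts} as the pointwise input. Since maps $[\A^1/\G_m]\to \cM_{n,V,c}^{\rm Fano}$ sending $1$ to $[(X,D)]$ correspond bijectively to special test configurations of $(X,D)$ together with the trivial one, Theorem \ref{t:HNfilts} already produces a unique optimal $\Theta$-filtration at each K-unstable point under Conjecture \ref{c:optdest}. It remains to show that (i) the image of $M^{\bm{\mu}}$ lies in a well-ordered subset of $(\R\cup\{\pm\infty\})^2$ under the lex order, and (ii) these optimal destabilizations vary algebraically in families, producing locally closed strata $\cS_\alpha$ indexed by the value of $M^{\bm{\mu}}$.

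For (i), the first coordinate of $M^{\bm{\mu}}$ equals $\delta(X,D)-1$ by \cite{BLZ19}, and the discreteness results for stability thresholds in \cite{Jia17, BLX19} imply that its image set intersected with $(-\infty,0)$ is well-ordered under the reverse order, with $0$ as the only accumulation point. Fix now a value $\delta_0<1$. By Conjecture \ref{c:optdest} combined with \cite{XZ19, Xu20b}, the central fibers of the optimal test configurations over the stratum $\{\delta=\delta_0\}$ form a bounded family of K-semistable log Fano pairs, and the optimal one-parameter subgroup lies in a torus of the central fiber's automorphism group. Since $\Fut$ and $\lnorm{\cdot}$ factor through this datum and are algebraic functions of the one-parameter subgroup, a noetherian induction on the bounded moduli of central fibers shows that the second coordinate of $M^{\bm{\mu}}$ is well-ordered over each slice $\{\delta=\delta_0\}$.

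For (ii), I would work with the stack of filtrations $\Filt(\cM_{n,V,c}^{\rm Fano}) = \Map([\A^1/\G_m],\cM_{n,V,c}^{\rm Fano})$, which is algebraic by \cite{AHLH18}. The functions $\Fut$, $\mnorm{\cdot}$ and $\lnorm{\cdot}$ extend to constructible functions on this stack by the CM and norm line bundle formalism, so the substack $Z_\alpha\subset\Filt(\cM_{n,V,c}^{\rm Fano})$ where $\bm{\mu}=\alpha$ is locally closed. The evaluation-at-$1$ morphism $Z_\alpha\to\cM_{n,V,c}^{\rm Fano}$ is injective on the locus where $M^{\bm{\mu}}=\alpha$ by Theorem \ref{t:HNfilts}(2) and surjects onto $\cS_\alpha$ by Theorem \ref{t:HNfilts}(1). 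Upper semicontinuity of $\delta$ and of the induced $L^2$-minimizer (via the constructible behavior of these invariants in families, cf.\ \cite{BLX19}) then shows that $\cS_\alpha$ is locally closed in the complement of the lower strata and that the inverse morphism $\cS_\alpha\to Z_\alpha$ is algebraic, yielding the required family of optimal destabilizations.

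The main obstacle will be (ii): showing that the pointwise-unique optimal test configurations from Theorem \ref{t:HNfilts} assemble into an algebraic family rather than a merely constructible section. Along a one-parameter specialization within a candidate stratum, one must verify that the generic optimal test configuration extends to the special fiber as a test configuration with the same value of $\bm{\mu}$. This requires applying the uniqueness half of Theorem \ref{t:HNfilts} to the total space of the family and exploiting the $\A^2$-degeneration technique of \cite{BLZ19} to rule out jumps of $\bm{\mu}$ under specialization. Alternatively, one may deduce the stratification from the general criterion of \cite{HL14}, which produces a $\Theta$-stratification from a numerical invariant satisfying a Harder--Narasimhan property together with boundedness and semicontinuity; each of these axioms becomes checkable once Conjecture \ref{c:optdest} is in hand.
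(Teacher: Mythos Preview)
Your outline captures the right high-level ingredients (existence/uniqueness from Theorem~\ref{t:HNfilts}, semicontinuity, and extension of test configurations under specialization), but there are two genuine gaps.

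First, your argument for (i) rests on a false claim. The central fiber of an optimal test configuration over a point with $\delta=\delta_0<1$ is \emph{not} K-semistable: by Theorem~\ref{t:BLZinf}(2) one has $\delta(\cX_0,\cD_0)=\delta(X,D)=\delta_0$, so the central fiber is just as unstable as the original. Hence the ``bounded family of K-semistable central fibers plus noetherian induction'' argument does not get off the ground. The paper proceeds very differently: on each bounded substack $\cM^{\delta\geq\epsilon}_{h,c}\cong[Z/\PGL]$ it runs a GIT-style decomposition (Propositions~\ref{p:mubehaviour} and~\ref{p:descending}) to show that the set of values $\{M^{\bmu}(X,D)\}$ is actually \emph{finite}, not merely well-ordered. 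This simultaneously gives constructibility of the level sets and existence of minimizers, and is the source of well-orderedness.

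Second, in (ii) you are missing the step that makes $\sS_{\bfm}$ a substack of $\uMap(\Theta,\sX_{\geq\bfm})$ rather than of $\uMap(\Theta,\sX)$: one must know that the central fiber of the optimal test configuration still lies in $\sX_{\geq\bfm}$, i.e.\ that $M^{\bmu}(\cX_0,\cD_0)=M^{\bmu}(X,D)$. This is Proposition~\ref{p:minimum-same}, proved by a quasi-convexity argument on the induced $\G_m^2$-action, and it does not follow from the extension result you cite. Relatedly, the closedness of $\mathrm{ev}_1$ on each stratum (Proposition~\ref{p:thetaclosed}) uses the extension result Proposition~\ref{t:BLZTheta} directly; the $\A^2$-degeneration of \cite{BLZ19} is the input for \emph{uniqueness} (Section~\ref{s:uniqueness}), not for extension under specialization. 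Finally, note that $\delta$ is \emph{lower} semicontinuous, not upper; this is what makes $\sX_{\geq\bfm}$ open.

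Your closing remark about deducing everything from the general criterion of \cite{HL14} is correct in spirit and is exactly the alternative route the paper takes in Section~\ref{s:general_stability}; but the hypotheses of that criterion (HN specialization and HN boundedness) again require Propositions~\ref{t:BLZTheta}, \ref{p:lscDVR}, and~\ref{p:M(X,D)=infl}, so the same inputs must be established either way.
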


Now, we consider the open subfunctor $\cM_{n,V,c}^{\rm Kss}\subset \cM_{n,V,c}^{\rm Fano}$ parametrizing families that have K-semistable fibers, where the openness was shown in \cite{BLX19,Xu20}.
By \cite{BX19, ABHLX19}, we know $\cM_{n,V,c}^{\rm Kss}$ admits a separated good moduli space $M_{n,V,c}^{\rm Kps}$ (as an algebraic space). As a corollary of Theorem \ref{t-theta} and \cite{AHLH18}, we conclude the following.

\begin{cor}\label{cor:properness}
If Conjecture \ref{c:optdest} holds,
then $M_{n,V,c}^{\rm Kps}$ is proper. 
\end{cor}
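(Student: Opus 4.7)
The plan is to establish the valuative criterion of properness for $M_{n,V,c}^{\rm Kps}$ by combining the $\Theta$-stratification from Theorem \ref{t-theta} with Langton's algorithm on stacks as abstracted in \cite{AHLH18}. Since $M_{n,V,c}^{\rm Kps}$ is already known to be a separated algebraic space by \cite{BX19, ABHLX19}, only existence of limits remains: given a DVR $R$ with fraction field $K$ and residue field $\kappa$, and a morphism $\Spec K \to M_{n,V,c}^{\rm Kps}$, I must produce, possibly after replacing $R$ by the integral closure in a finite extension of $K$, a lift $\Spec R \to M_{n,V,c}^{\rm Kps}$ agreeing with the given map on $\Spec K$.

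First I would lift $\Spec K \to M_{n,V,c}^{\rm Kps}$ to a K-semistable family $(X_K,D_K)$ in $\cM_{n,V,c}^{\rm Kss}$, which is possible after a finite base change since the good moduli map $\cM_{n,V,c}^{\rm Kss}\to M_{n,V,c}^{\rm Kps}$ is surjective on geometric points. The next step is to produce an extension of $(X_K,D_K)$ to a log Fano family $(X_R,D_R)\to \Spec R$, corresponding to a morphism $\Spec R \to \cM_{n,V,c}^{\rm Fano}$. This uses boundedness of K-semistable log Fanos with the fixed numerical invariants and an MMP-based construction of $\Q$-Gorenstein families, in the spirit of \cite{BX19}. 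The resulting special fiber $(X_\kappa,D_\kappa)$ may well be K-unstable.

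The heart of the argument is Langton's algorithm, driven by the $\Theta$-stratification. If $(X_\kappa,D_\kappa)$ is K-unstable, then under Conjecture \ref{c:optdest}, Theorem \ref{t:HNfilts} furnishes a unique optimal destabilizing special test configuration realizing $M^{\bmu}(X_\kappa,D_\kappa)$; Theorem \ref{t-theta} packages these canonical degenerations into a $\Theta$-stratification on $\cM_{n,V,c}^{\rm Fano}$ indexed by $\bmu$-values under the lexicographic order. Invoking the main theorem of \cite{AHLH18}, this stratification allows one to modify the family over $\Spec R$ (after a further ramified base change) by an elementary transformation along the central fiber using the optimal destabilization, producing a new family $(X_R',D_R')$ with identical generic fiber but whose special fiber has strictly smaller $\bmu$-value in $\R^2$.

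Finally I would appeal to the well-ordered property built into the $\Theta$-stratification of Theorem \ref{t-theta}: the set of $\bmu$-values occurring on $\cM_{n,V,c}^{\rm Fano}$ is well-ordered, so the iterated Langton modifications must terminate after finitely many steps, leaving a family whose special fiber is K-semistable. Composing with the good moduli map yields the required lift $\Spec R \to M_{n,V,c}^{\rm Kps}$, establishing the valuative criterion. The principal obstacle I anticipate is not the Langton iteration itself, which is largely encapsulated in \cite{AHLH18}, but rather the verification of the hypotheses needed to apply that machinery in the Fano setting — namely, confirming that the optimal destabilizing test configurations supplied by Theorem \ref{t-theta} behave compatibly in families of log Fano pairs, and that the elementary modifications remain within $\cM_{n,V,c}^{\rm Fano}$ rather than escaping to a larger class of degenerations.
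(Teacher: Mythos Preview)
Your proposal is correct and follows essentially the same strategy as the paper: extend the K-semistable generic fiber to a log Fano family over the DVR (the paper invokes \cite{LX14} for this step rather than \cite{BX19}), then apply the semistable reduction theorem \cite[Theorem~6.5]{AHLH18} for stacks carrying a well-ordered $\Theta$-stratification---which packages exactly the Langton iteration you describe---and finally pass from the valuative criterion on $\cM_{n,V,c}^{\rm Kss}$ to properness of $M_{n,V,c}^{\rm Kps}$ via \cite[Proposition~3.47 and Remark~3.48]{AHLH18}. The concerns you raise at the end (compatibility of optimal destabilizations in families, and that the elementary modifications stay within $\cM_{n,V,c}^{\rm Fano}$) are precisely the content of Theorem~\ref{t-theta}, so no further work is needed there.
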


Note that in \cite{XZ19}, building on \cite{CP21}, it is shown that the CM line bundle on $M_{n,V,c}^{\rm Kps}$ is ample, provided that $M_{n,V,c}^{\rm Kps}$  is proper and its points  parameterize reduced uniformly K-stable log Fano pairs. The latter would follow from a conjecture similar to Conjecture \ref{c:optdest} (see \cite[Conjecture A.12]{XZ19}).

\bigskip

The paper is organized as follows. After providing  background in \S\ref{s:prelim}, we collect information on properties of $\bmu$ when restricted to 1-parameter subgroups of a torus acting on a log Fano pair in \S\ref{s:logFanotorus}.
We then prove Theorem \ref{t:HNfilts} on the existence and uniqueness of minimizers of $\bmu$ in \S\ref{s:existance} and \S\ref{s:uniqueness}.
In \S\ref{s:lsc}, we analyze the behavior  of $M^{\bmu}$  in families. 
Lastly,  we prove Theorem \ref{t-theta} and Corollary \ref{cor:properness} in  \S\ref{s:theta}, and discuss an alternative approach using the general framework of $\Theta$-stability in \S\ref{s:general_stability}.

\bigskip

\emph{Postscript remarks.} After the first version of this article was posted on the arXiv, in \cite{LXZ21}, the third, fourth authors and Zhuang prove that any valuation computing $\delta(X,\Delta)< \frac{n+1}{n}$ where $n=\dim(X)$ has a finitely generated associated graded ring. This confirms Conjecture \ref{c:optdest} in full generality and hence, combined with Theorem \ref{t-theta} and Corollary \ref{cor:properness}, leads to proofs of the existence of a $\Theta$-stratification on the  stack $\cM_{n,V,c}^{\rm Fano}$ and  the properness of the K-moduli space $M_{n,V,c}^{\rm Kps}$.

\bigskip

\noindent {\bf Acknowledgement}: \textit{The authors thank Jarod Alper and Jochen Heinloth for helpful conversations.
HB was partially supported by NSF grant DMS-1803102.
DHL was partially supported by a Simons Foundation Collaboration grant and NSF CAREER grant DMS-1945478.
YL was partially supported by NSF grant DMS-2001317.
CX was partially supported by NSF grant DMS-1901849.
}

\section{Preliminaries}\label{s:prelim}

\subsection{Conventions} 
Throughout, we work over an algebraically closed  characteristic 0 field $k$.
We  follow standard  terminologies in \cite{KM98, Kol13}.

A \emph{pair} $(X,D)$ is composed of a normal variety $X$ and an effective $\Q$-divisor $D$
on $X$ such that $K_X+D$ is $\mathbb{Q}$-Cartier. 
See \cite[2.34]{KM98} for the definitions of \emph{klt} and \emph{lc} pairs.
A pair $(X,D)$ is \emph{log Fano} if $X$ is projective, $(X,D)$ is klt, and $-K_X-D$ is ample. 
A variety $X$ is {\it $\mathbb{Q}$-Fano} if $(X,0)$ is log Fano.

\begin{defn}\label{d:familynormalbase}
A \emph{family of log Fano pairs} $f:(X,D) \to T$  over a normal scheme $T$ is the data of a flat surjective morphism of schemes $f:X \to T$ and a
$\Q$-divisor $D$ on $X$ satisfying
\begin{enumerate}
\item  $T$ is normal and $f$ has normal fibers (hence, $X$ is normal as well),
\item $\Supp(D)$ does not contain a fiber,
\item $K_{X/T} +D$ is $\Q$-Cartier, and
\item $(X_{\overline{t}},D_{\overline{t}})$ is  a log Fano pair for each $t\in T$.
\end{enumerate}
Using \cite{Kol19}, this definition can be extended to the case when $T$ is not-necessarily normal; see Definition \ref{d-logfanofamily}.
\end{defn}

\subsection{K-stability}

\subsubsection{Definition}
In this section, we recall the definition of K-stability \cite{Tia97,Don02}.  Following \cite{LX14}, we define these notions only using special test configurations.

\begin{defn}
Let $(X,D)$ be a log Fano pair. A \emph{special test configuration} $(\cX,\cD)$ of  $(X,D)$ is the data of 
\begin{enumerate}
    \item a family of log Fano pairs $(\cX,\cD)\to \A^1$,
    \item a $\G_m$-action on $(\cX,\cD)$ extending the standard action on $\A^1$, and
    \item an isomorphism $(\cX_1,\cD_1)\simeq (X,D)$.
 \end{enumerate}
The test configuration is a \emph{product} if $(\cX,\cD)\simeq (X,D)\times \A^1$ as a family of log Fano pairs  and  \emph{trivial} if the latter isomorphism is $\G_m$-equivariant with respect to the trivial action on $(X,D)$ and the standard action on $\A^1$.
\end{defn}

A special test configuration $(\cX,\cD)$ can be \emph{scaled} by a positive integer $d$.
Indeed, the base change of $(\cX,\cD)$ by the map $\A^1 \to \A^1$ sending $t\mapsto t^d$ is a special test configuration and we denote it by $(\cX^{(d)},\cD^{(d)})$. We call a special test configuration $(\cX,\cD)$ \emph{primitive} if it is not a scaling of some other test configuration with $d\geq 2$.

A special test configuration $(\cX,\cD)$ has a natural compactification
$(\overline{\cX},\overline{\cD})\to \PP^1$ constructed by gluing
 $\cX$  and $X\times( \PP^1 \setminus 0) $  along their respective open sets $\cX\setminus \cX_0$ and $X\times (\A^1\setminus \{0\})$.
The \emph{generalized Futaki invariant} of a special test configuration $(\cX,\cD)$ is defined by
\[
\Fut(\cX,\cD)  :  =- 
\frac{ (-K_{\overline{\cX}/\PP^1} - \overline{\cD})^{n+1}}
{(n+1) (-K_X-D)^n}, 
\]
where $n$ is the dimension of $X$. Note that this definition using the intersection formula is equivalent to the original definition from \cite{Tia97, Don02} by \cite{Wan12, Oda13b}.

\begin{defn}[K-stability]
\cite{Tia97,Don02,LX14}
A log Fano pair $(X,D)$ is 
\begin{enumerate}
    \item \emph{K-semistable} if $\Fut(\cX,\cD)\geq 0$ for all special test configurations of $(\cX,\cD)$;
    \item \emph{K-polystable} if it is K-semistable and any special test configuration $(\cX,\cD)$ such that $\Fut(\cX,\cD)=0$ is a product.
\end{enumerate}
We call $(X,D)$ \emph{K-unstable} if it is not K-semistable.
\end{defn}

\subsection{Further invariants of test configurations}\label{ss:furtherinvariants}

Let $(X,D)$ be a log Fano pair with a $\G_m$-action denoted by $\xi$. Fix $r\in \Z_{>0}$ such that $L:=-r(K_X+D)$ is a Cartier divisor and 
set 
\[
R:= \bigoplus_{m \in \N} R_{m} := \bigoplus_{m \in \N} H^0(X,\cO_X(L)).
\]
Note that the $\G_m$-action $\xi$ on $X$ induces a canonical $\G_m$-action on $R_m$ and we write $\bigoplus_{\lambda\in \Z} R_{m,\lambda}$ for the weight decomposition.  
Set
\[
N_m := \dim R_m, 
\quad
w_m := \sum_{\lambda \in \Z } \lambda \dim R_{m,\la},
\quad 
\text{ and }
\quad
q_m: = 
\sum_{\lambda \in \Z } \lambda^2 \dim R_{m,\lambda}
.\]
By general theory (see  \cite{Don05,BHJ17}), for $m\gg0$, there are Laurent expansions with rational coefficients such that
\[
\frac{w_m}{mrN_m}= F_0 + F_1 m^{-1} + F_2 m^{-2} + \cdots 
\]
\[
\frac{q_m}{(mr)^2 N_m} = Q_0 + Q_1 m^{-1}+ Q_2 m^{-2}+ \cdots 
.\]
The 
\emph{Futaki invariant} \cite{Don02}
 of $(X,D;\xi)$ is given by
\[
\Fut(X,D;\xi) := -F_0 
.\]
The $L^2$ \emph{norm} \cite{Don05} and \emph{minimum norm}
\cite{Der16} are defined by 
\[
\mnorm{X,D;\xi} : = F_0  - \lambda _{\min}
\quad 
\text{ and } \quad 
\lnorm{X,D;\xi} := \sqrt{Q_0 - F_0^2}
,\]
where  $\lambda_{\min} := \inf_m  \frac{\lambda_{\min,m}}{mr}$ and $\lambda_{\min,m} : = \min \{ \lambda\in \Z \, \vert \,  R_{m,\lambda} \neq 0\}$.

For a special  test configuration
$(\cX,\cD)$ of $(X,D)$, write $\xi$ for the induced $\G_m$-action on $(\cX_0,\cD_0)$. 
As observed in \cite{Wan12,Oda13b}, 
$\Fut{(\cX,\cD)} = \Fut(\cX_0,\cD_0;\xi)$.
The 
 \emph{minimum norm}
and 
$L^2$ \emph{norm}
 of $(\cX,\cD)$
are defined by 
\[
\mnorm{\cX,\cD}: = \mnorm{\cX_0,\cD_0;\xi}
\quad \text{ and }\quad
\lnorm{\cX,\cD}: = \lnorm{\cX_0,\cD_0;\xi}.
\]
These norms are non-negative and equal zero if and only if $(\cX,\cD)$ is trivial; see \cite[Corollary B]{BHJ17} and \cite[Theorem 1.3]{Der16}. We note that the \emph{minimum norm} agrees with the non-Archimedian $I-J$ functional in \cite{BHJ17} and also has an interpretation in terms of intersection numbers \cite[Remark 3.11]{Der16}  \cite[Remark 7.12]{BHJ17}.

More generally, for $p\in [1,+\infty]$, one can define the $L^p$ norm of a test configuration \cite{Don05} (see also \cite{His16} and \cite{BHJ17}*{Definition 6.5}).
Since  the minimum norm is equivalent to the $L^1$ norm by \cite{BHJ17}*{Remark 7.12} and the $L^1$ norm is less than or equal to   the $L^2$ norm, there exists  $c>0$ such that
\begin{equation}\label{e:mnorm<clnorm}
\mnorm{\mathcal{X},\mathcal{D}} \leq c \lnorm{\mathcal{X},\mathcal{D}}
\end{equation}
for all test configurations $(\mathcal{X},\mathcal{D})$ of $(X,D)$.

We will frequently use the following elementary fact  on the behavior of these invariants in families. 

\begin{lem}\label{l:futnormfiberwise}
Let $f:(X,D)\to T$ be a family of log Fano pairs  admitting a fiberwise $\G_m$-action $\xi$. If $T$ is connected, then $\Fut(X_t,D_t; \xi)$, $\mnorm{X_t,D_t; \xi}$, and $\lnorm{X_t,D_t; \xi}$ are independent of $t\in T$. 
\end{lem}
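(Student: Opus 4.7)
The plan is to show that each of the three invariants is determined by the $\G_m$-character of a locally free sheaf on $T$, and then use connectedness of $T$ to conclude that the character is independent of the fiber. Concretely, all three quantities are computed from the dimensions $\dim R_{m,\lambda}(t)$ of the weight subspaces of sections of $L := -r(K_{X/T}+D)$ on each fiber, for $r$ sufficiently divisible, so it suffices to check that each $\dim R_{m,\lambda}(t)$ is independent of $t$.

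First I would fix $r > 0$ so that $L = -r(K_{X/T}+D)$ is a relatively ample Cartier divisor, and observe that the fiberwise $\G_m$-action (which preserves the fibers and hence $K_{X/T}$, and preserves $D$) induces a canonical $\G_m$-linearization of $L$ covering the trivial action on $T$. Next I would invoke Kawamata--Viehweg vanishing on each klt log Fano fiber: for $m \geq 1$ the Cartier divisor $L_t^{\otimes m} - (K_{X_t}+D_t) = -(mr+1)(K_{X_t}+D_t)$ is ample, so $H^i(X_t, L_t^{\otimes m}) = 0$ for $i > 0$. Cohomology and base change then gives that $\mathcal{E}_m := f_* L^{\otimes m}$ is locally free on $T$ with $(\mathcal{E}_m)_t \cong R_m(t)$ for every $t \in T$.

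Since $\G_m$ is linearly reductive and acts trivially on $T$, the equivariant locally free sheaf $\mathcal{E}_m$ decomposes canonically as $\bigoplus_\lambda \mathcal{E}_{m,\lambda}$ into locally free weight subbundles whose fiber at $t$ is $R_{m,\lambda}(t)$. Connectedness of $T$ forces each rank $\dim R_{m,\lambda}(t)$ to be constant in $t$. Consequently $N_m$, $w_m(t)$, $q_m(t)$, and $\lambda_{\min,m}(t)$ are all independent of $t$, hence so are the asymptotic coefficients $F_0$, $Q_0$, and $\lambda_{\min}$ from the Laurent expansions in \S\ref{ss:furtherinvariants}. This yields the claimed invariance of $\Fut(X_t,D_t;\xi)$, $\mnorm{X_t,D_t;\xi}$, and $\lnorm{X_t,D_t;\xi}$.

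I do not anticipate a genuine obstacle here; the argument is essentially bookkeeping with weight subbundles. The one point worth double-checking is the existence of a canonical $\G_m$-linearization of $L$ in the normal (not necessarily smooth) setting, but this follows from the canonical equivariant structure on the reflexive relative dualizing sheaf $\omega_{X/T}$ combined with the $\G_m$-invariance of $D$, after making $r$ divisible enough to clear denominators.
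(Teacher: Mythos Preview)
Your proposal is correct and takes essentially the same approach as the paper: fix $r$ so that $L=-r(K_{X/T}+D)$ is Cartier, use Kawamata--Viehweg vanishing on each fiber to get that $f_*\cO_X(mL)$ is locally free and commutes with base change, then use the induced fiberwise $\G_m$-action to conclude that the weight-space dimensions $\dim R_{m,\lambda}(t)$ are locally constant on $T$. You simply spell out a few points the paper leaves implicit (the existence of the canonical $\G_m$-linearization on $L$ and the weight decomposition of $\mathcal{E}_m$ via linear reductivity), but the argument is the same.
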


\begin{proof}
Fix a positive integer $r$ such that $L:=-r(K_{X/T}+D)$ is a Cartier divisor. 
Since $H^i(X_t,\cO_{X_t}(mL_t))=0$ for all $m,i>0$  and $t\in T$
by Kawamata-Viehweg vanishing,  $ f_*\cO_{X}(mL)$ is a vector bundle and commutes with base change.
Since $\xi$ induces a fiberwise $\G_m$-action on $ f_*\cO_{X}(mL)$, the vector bundle  admits a  direct sum decomposition into weight spaces 
$ f_*\cO_{X}(mL) =\bigoplus_{\la \in \Z} (f_*\cO_{X}(mL))_\la$, where each $(f_*\cO_{X}(mL))_\la$ is a vector bundle and commutes with base change.
Therefore,  $\dim (H^0(X_t, \cO_{X_t}(mL_t))_{\lambda})$ is independent of $t\in T$ and the result follows. 
\end{proof}

\subsection{Stability Threshold}
Here, we recall the stability threshold of a log Fano pair and its relation to the invariants in Section \ref{ss:bmu}.

\subsubsection{Definition}
Let $(X,D)$ be a log Fano pair of dimension $n$. A \emph{divisor over}  $X$ is the data of a prime divisor $E$ on a normal variety $Y$ with a proper birational morphism $\pi:Y\to X$.
The \emph{log discrepancy} of $E$
is defined by $A_{X,D}(E) : = {\rm coeff}_E(D_Y)+1$, where $K_Y+ D_Y = \pi^* (K_X+D)$.
The \emph{average order of vanishing} of  $-K_X-D$ along $E$
is given by 
\[
S_{X,D}(E) : = \frac{1}{(-K_X-D)^n} \int_0^\infty \vol( -\pi^*(K_X+D) - t E) \, dt
.\]

The \emph{stability threshold} (also known as the \emph{$\delta$-invariant}) of $(X,D)$
is given by 
\begin{equation}\label{eq:deltadef}
\delta(X,D) := \inf_{E} \frac{ A_{X,D}(E)}{S_{X,D}(E)}
\end{equation}
If a divisor $E$ over $X$ achieves the infimum, we say that $E$ \emph{computes} $\delta(X,D)$.

This invariant was originally introduced in \cite{FO18} using a different definition, which is equivalent to the latter by \cite{BJ20}.
It follows from \cite{Fuj19,Li17}, that $\delta(X,D)\geq 1$ if and only if $(X,D)$ is K-semistable. 

\subsubsection{Optimal Destabilization Conjecture}
Conjecture \ref{c:optdest} predicts that on a K-unstable log Fano pair $(X,D)$, the infimum in \eqref{eq:deltadef} is achieved.
Partial answers to this conjecture are known.
\begin{itemize}
    \item  When $X$ is smooth and $D=0$, the statement follows from \cite{DS16,RS19}, which rely on deep analytic results; see \cite[Theorem 6.7]{BLZ19}.
    \item For any K-unstable log Fano pair, a weaker version of the conjecture holds in which divisorial valuations are replaced by quasi-monomial valuations. Specifically, there always exists a quasi-monomial valuation computing the stability threshold by either \cite{BLX19} or \cite{BJ20,Xu20}.
\end{itemize}
In \cite{Xu20b}, the conjecture is reduced to the problem of showing that valuations computing the stability threshold induce finitely generated filtrations of the section ring.

\subsubsection{Relation with test configurations}

The first and third authors and Zhou previously studied the relationship between Conjecture 1.1 and  destabilizing test configurations.

\begin{thm}[\cite{BLZ19}]\label{t:BLZinf}
If $(X,D)$ is a log Fano pair that is K-unstable, then 
\begin{equation}\label{e:d-1=infF/min}
\delta(X,D) -1  = \inf_{(\cX,\cD)}  \frac{ \Fut(\cX,\cD)}{\mnorm{\cX,\cD}}
,\end{equation}
where the $\inf$ runs through non-trivial special test configurations of $(X,D)$. 
Additionally,
\begin{enumerate}
    \item the infimum in \eqref{e:d-1=infF/min} is computed if and only if Conjecture 1.1 holds for $(X,D)$;
    \item if $(\cX,\cD)$ computes the infimum, then $\delta(X,D) = \delta(\cX_0,\cD_0)$.
\end{enumerate}
\end{thm}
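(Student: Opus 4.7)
The plan is to set up a dictionary between special test configurations of $(X,D)$ and divisorial valuations on $X$, then compare $\Fut/\mnorm$ with $A/S$. For a special test configuration $(\cX,\cD)$, the prime divisor $\cX_0\subset\cX$ (viewed through the birational identification $\cX\setminus\cX_0\cong X\times(\A^1\setminus 0)$) induces, after rescaling, a divisorial valuation $v_\cX$ on $K(X)$. The first step is to verify the fundamental identity
\[
\frac{\Fut(\cX,\cD)}{\mnorm{\cX,\cD}} \;=\; \frac{A_{X,D}(v_\cX)}{S_{X,D}(v_\cX)} - 1,
\]
which follows from the Fujita--Li intersection formula $\Fut(\cX,\cD)=c\bigl(A(v_\cX)-S(v_\cX)\bigr)$ together with a parallel computation identifying the minimum norm as $\mnorm{\cX,\cD}=c\cdot S(v_\cX)$ with the same normalization constant $c>0$. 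Since $v_\cX$ is divisorial, this immediately yields the inequality $\inf_{(\cX,\cD)}\Fut/\mnorm \geq \delta(X,D)-1$.

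For the reverse inequality, I would invoke the result of \cite{BLX19,Xu20} that $\delta(X,D)$ is always computed by some quasi-monomial valuation $v$. Via Diophantine approximation inside the dual complex of a fixed log resolution, one can approximate $v$ by divisorial valuations $v_k$ with $A_{X,D}(v_k)/S_{X,D}(v_k)\to\delta(X,D)$. Each $v_k$ induces a $\Z$-filtration of the section ring $R=\bigoplus_m H^0(-mr(K_X+D))$; since this filtration need not be finitely generated, one passes to its $m$-th truncation, which \emph{is} finitely generated and produces a normal test configuration. A relative MMP (the K-unstable hypothesis ensuring that $\Fut$ is non-increasing along its steps) converts this into a special test configuration $(\cX_{k,m},\cD_{k,m})$, and taking the double limit $m\to\infty$ then $k\to\infty$, together with continuity of both $A$ and $S$ along these approximations, shows that $\Fut(\cX_{k,m},\cD_{k,m})/\mnorm{\cX_{k,m},\cD_{k,m}}$ converges to $\delta(X,D)-1$.

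Part (1) then follows immediately in one direction: if $(\cX,\cD)$ attains the infimum, the induced valuation $v_\cX$ computes $\delta(X,D)$, verifying Conjecture~\ref{c:optdest}. Conversely, if a divisor $E$ computes $\delta(X,D)$, then combining the K-unstable hypothesis $A_{X,D}(E)/S_{X,D}(E)<1$ with finite generation of the associated graded ring produces a special test configuration from $E$ which, by the identity above, attains the infimum. For part (2), the $\G_m$-action on $(\cX_0,\cD_0)$ gives a product test configuration of $(\cX_0,\cD_0)$ with the same normalized Futaki invariant as $(\cX,\cD)$, so $\delta(\cX_0,\cD_0)\leq\delta(X,D)$; the reverse inequality is forced by the optimality of $(\cX,\cD)$, since any strictly better destabilizer of $(\cX_0,\cD_0)$ could be composed with $(\cX,\cD)$ via a $2$-parameter equivariant family to produce a destabilizer of $(X,D)$ strictly better than $(\cX,\cD)$, a contradiction.

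The main obstacle is the approximation step in the second paragraph. Converting filtrations arising from divisorial or quasi-monomial valuations into bona fide special test configurations requires truncating and then running a relative MMP, after which one must control the error introduced to both $A$ and $S$, and in particular must establish the convergence of $\mnorm$ along the truncations --- this is the most delicate point because $\mnorm$ is sensitive to the minimum weight, which can shift in unexpected ways under truncation. This is also precisely what makes part (1) nontrivial: the infimum is attained exactly when no truncation or limiting process is needed, i.e., when the optimal $v$ is divisorial and its induced filtration is already finitely generated.
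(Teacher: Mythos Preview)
The paper does not prove this statement: it quotes the result from \cite{BLZ19}, and its ``proof'' consists solely of citations to Theorems~1.1 and~1.5 of that paper, preceded by a brief translation of the twisted K-stability language used there (with twist parameter $1-\beta$, so that $\Fut_{1-\beta}=\Fut-(1-\beta)\mnorm{\,\cdot\,}$) into the $\Fut/\mnorm{\,\cdot\,}$ formulation used here. Your proposal therefore goes well beyond what the paper does, and in broad outline it is a reasonable reconstruction of the strategy in \cite{BLZ19}.

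There is, however, a genuine gap in your converse direction of (1). You assert that if a divisor $E$ computes $\delta(X,D)<1$, then ``finite generation of the associated graded ring produces a special test configuration from $E$.'' But finite generation is exactly the nontrivial step: there is no a priori reason why the filtration of $R$ induced by $\ord_E$ should be finitely generated, and you yourself flag this issue in your final paragraph. In \cite{BLZ19} this is established by showing, via MMP techniques, that a $\delta$-minimizing divisor with $\delta<1$ arises as an lc place of a bounded complement and is therefore dreamy; this birational input is the heart of the ``$\Leftarrow$'' implication and cannot be elided. A similar caution applies to your argument for (2): composing a destabilizer of $(\cX_0,\cD_0)$ with the given $(\cX,\cD)$ via a $\G_m^2$-equivariant family over $\A^2$ is not formal --- constructing that family is itself an MMP argument (it is \cite{BLZ19}*{Theorem 5.4}, quoted later in the present paper as Theorem~\ref{t:BLZ}).
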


The results in \cite{BLZ19} are stated in terms of twisted K-stability \cite{Der16}, which we do not use in this paper. 
To translate results to our setting note that for $\beta \in (0,1]$,
\begin{itemize}
    \item[(i)] the \emph{$\beta$-twisted Futaki invariant} $\Fut_{1-\beta}(\cX,\cD)$ equals $\Fut(\cX,\cD) - (1-\beta) \mnorm{\cX,\cD}$  \cite[Proposition 3.6.1]{BLZ19}.
    \item[(ii)]  a log Fano pair is $\beta$-twisted K-semistable if and only if
$\Fut_{1-\beta}(\cX,\cD) \geq 0 $ for all special test configurations $(\cX,\cD)$ of $(X,D)$ \cite[Theorem 1.6]{BLZ19}.

\end{itemize}
\begin{proof}
The equality follows from \cite[Theorem 1.5]{BLZ19}. Statements (1) and (2) follow from \cite[Theorem 1.1]{BLZ19}.
\end{proof}


\begin{rem}\label{e:P11m}
Divisors computing the stability threshold, as well as special test configurations computing $\inf \frac{ \Fut}{\mnorm{\,}}$,
 do not in general give canonical ``destabilizations'' of  K-unstable log Fano pairs.
 For example, consider the $\Q$-Fano variety $X:=\PP(1,1,m)$ with $m>1$. 
Since $X$ is toric, one can check 
using \cite[Section 7]{BJ20}
that $X$ is K-unstable and  there are infinitely many divisors over $X$ computing its stability threshold. For instance, every ruling corresponding to a section of $\cO_X(1)$ is a divisor computing $\delta(X)=\frac{3}{m+2}$.
\end{rem}

\subsection{Stability  Function}\label{ss:bmu}

In this paper, we introduce a bi-valued function on the set of special test configurations of a log Fano pair.
Conjecturally, the function will  identify a unique optimal destabilization  of a K-unstable log Fano pair.


For a non-trivial special test configuration $(\cX,\cD)$ of $(X,D)$, we define the invariant
\[
\bm{\mu} \left( \cX,\cD \right) 
:=
\left( \mu_1 \left( \cX,\cD \right), \mu_2 \left( \cX,\cD \right) \right) \in \R^2,
\]
where 
\[
\mu_1 \left( \cX,\cD \right)
:=
 \frac{\Fut(\cX , \cD )}{ \mnorm {\cX,\cD} }
\quad \text{ and } \quad 
\mu_2 \left( \cX,\cD \right):=
\frac{\Fut(\cX , \cD )}{ \lnorm{\cX,\cD}}.
\]
For  a K-unstable log Fano pair  $(X,D)$, we set 
\begin{equation}\label{eq:M=infmu1}
M^{\bmu}(X,D) : = (M_1^{\bmu}(X,D),M_2^{\bmu}(X,D)) := \inf_{ (\cX,\cD)} \bmu(\cX,\cD)  \in (\mathbb{R} \cup \{\pm \infty\})^2
,
\end{equation}
where the infimum runs through non-trivial special test configurations of
$(X,D)$ and  is taken with respect to the lexicographic ordering on $\mathbb{R}^2$. When $(X,D)$ is K-semistable, we set $M^{\bmu}(X,D) : = {\bf 0} \in \mathbb{R}^2$.

Observe that if $(X,D)$ is K-unstable, then
\begin{equation}\label{e:M_1}
M_{1}^{\bmu}(X,D) = \inf_{(\cX,\cD)} \mu_1(\cX,\cD) = \delta(X,D)-1,
\end{equation}
where the second equality is Theorem \ref{t:BLZinf}.
Additionally, if the inf in  \eqref{e:M_1} is computed, then 
\[
M_{2}^{\bmu}(X,D) = \inf \{  \mu_2(\mathcal{X},\mathcal{D}) \, \vert \, 
\mu_1( \cX,\cD) = M_1^{\bmu} (X,D)\}
.\] If not, $M_{2}^{\bmu} (X,D)= +\infty$. Since $(X,D)$ is K-unstable and $\delta(X,D)>0$ \cite[Theorem A]{BJ20}, $M_{1}^{\bmu}(X,D) \in (-1,0)$. 
Using \eqref{e:mnorm<clnorm}, we see $M_{2}^{\bmu}(X,D) \in \R_{<0} \cup \{+\infty\}$.


\begin{rem}
In the literature, minimizers of $\mu_1$ and $\mu_2$ have been studied separately. 
\begin{enumerate}
    \item In \cite{BLZ19}, properties of minimizers of $\mu_1$
    are studied using tools from birational geometry.
    See  Theorem \ref{t:BLZinf}.
    \item In \cite{Don05,Sze08,Xia19},  optimal destabilizations refer to test configurations that minimize $\mu_2$ and are studied in the more general polarized case.
\end{enumerate}
While (1) has the advantage that  birational geometry results can be applied to
study minimizers of  $\mu_1$, the minimizer of $\mu_1$ is in general non-unique (see Remark
\ref{e:P11m}). This can be
fixed by leveraging the convexity properties of $\mu_2$ and using the above bi-valued function.
\end{rem}

\subsection{Moduli spaces of log Fano pairs}\label{ss:modulifano}

To define a moduli functor for log Fano pairs, we need to  define families of log Fano pairs over arbitrary schemes.
To have a well behaved moduli theory for the divisors that appear, we use \cite{Kol19}.

\begin{defn}\label{d-logfanofamily} We call  $f:(X,D:=c\Delta)\to T$ a \emph{family of log Fano pairs}
 if 
\begin{enumerate}
\item $f:X\to T$ is a flat projective morphism of schemes,
\item $\Delta$ is K-flat family of relative Mumford divisors on $X$ (see \cite{Kol19}),
\item $K_{X/T} + D$ is $\Q$-Cartier, and
\item $(X_{\overline{t}},D_{\overline{t}})$ is a log Fano pair for each $t\in T$.
\end{enumerate}
Furthermore, we call $f$ a \emph{family of K-semistable log Fano pairs}
if in addition
\begin{enumerate}
\item[(5)] $(X_{\overline{t}}, D_{\overline{t}})$ is K-semistable  for each $t\in T$.
\end{enumerate}
Above $c\in \Q_{>0}$.
Since $\Delta$ has  coefficients in $\N$, the coefficients of $D$ lie in $\{ n c \mid n  \in \N\} \cap [0,1]$.
\end{defn}

For $n\in \N$ and $V,c\in \Q_{>0}$, we define the moduli functor
$\cM_{n,V,c}^{\rm Fano}$ that sends $T\in {\sf Sch}_k $ to 
 \[
\mathcal{M}^{\rm Fano}_{n,V,c}(T) = 
 \left\{
  \begin{tabular}{c}
\mbox{families of log Fano pairs $(X, D:= c\Delta ) \to T$} \\
\mbox{with $\dim(X_t)=n$ and $(-K_{X_t}-D_t)^n = V$  for all $t\in T$}
\end{tabular}
\right\}
\]
and subfunctor 
$\cM_{n,V,c}^{\rm Kss}$ defined by
 \[
\mathcal{M}^{\rm Kss}_{n,V,c}(T) = 
 \left\{
 \begin{tabular}{c}
\mbox{families of K-semistable log Fano pairs $(X, D:= c\Delta ) \to T$} \\
\mbox{with $\dim(X_t)=n$ and $(-K_{X_t}-D_t)^n = V$  for all $t\in T$ }
\end{tabular}
\right\}
\]

The next theorem is a consequence of the following recent results:
the  boundedness of $\cM_{n,V,c}^{\rm Kss}$ \cite{Jia17,Che18,LLX18,XZ20}, the openness of K-semistability \cite{BLX19,Xu20}, 
 and the existence of a separated good moduli space \cite{BX19,ABHLX19}. See \cite[Theorem 2.21]{XZ19} for details. 

\begin{thm}\label{t:K-moduli}
The moduli functor $\cM_{n,V,c}^{\rm Kss}$ is an 
algebraic stack of finite type over $k$ with affine diagonal, and it admits a separated good moduli space $ M_{n,V,c}^{\rm Kps}$ whose $k$-valued points are in bijection with K-polystable log Fano pairs $(X,cD:= \Delta)$ of dimension $n$ and volume $V$.
\end{thm}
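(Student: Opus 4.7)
The plan is to assemble the theorem from three separate bodies of existing work, in the order: algebraicity of the larger Fano stack, finite type with affine diagonal, and finally the existence of the separated good moduli space.

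First, I would establish algebraicity of the larger stack $\cM_{n,V,c}^{\rm Fano}$ parametrizing all (not-necessarily-K-semistable) log Fano families with the prescribed numerical invariants. The delicate ingredient is to make sense of the boundary $D=c\Delta$ in families, for which I would invoke Kollár's theory of K-flat families of Mumford divisors as already recorded in Definition \ref{d-logfanofamily}; the underlying variety side is handled by standard Hilbert-scheme techniques applied to the projective embedding by a sufficiently divisible multiple of $-(K_{X/T}+D)$. With $\cM_{n,V,c}^{\rm Fano}$ algebraic and locally of finite type, the openness of K-semistability in families, proved in \cite{BLX19, Xu20}, immediately gives that $\cM_{n,V,c}^{\rm Kss}$ is an open substack.

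Second, I would check finite type and affineness of the diagonal. Finite type reduces to boundedness of the set of K-semistable log Fano pairs of fixed dimension and volume, a deep result established over the sequence \cite{Jia17, Che18, LLX18, XZ20}. Affineness of the diagonal is more elementary: it is represented by relative Isom-schemes of pairs, which are affine of finite type since they embed as closed subschemes of the relative automorphism scheme of the anticanonical projective embedding, cut out by the conditions of compatibility with the boundary divisor.

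Third, I would deduce the existence and separatedness of the good moduli space $M_{n,V,c}^{\rm Kps}$ from the general existence criterion of \cite{ABHLX19}, which characterizes when an algebraic stack of finite type with affine diagonal over $k$ admits a separated good moduli space: the conditions are $\Theta$-reductivity and S-completeness. Both of these local-to-global stability conditions have been verified for $\cM_{n,V,c}^{\rm Kss}$ in \cite{BX19, ABHLX19}, the key inputs being reductivity of automorphism groups of K-polystable pairs and the existence and uniqueness (up to isomorphism) of K-polystable degenerations of K-semistable Fano pairs. The claimed bijection between $k$-valued points of $M_{n,V,c}^{\rm Kps}$ and K-polystable pairs then follows from the standard identification of $k$-points of a good moduli space with closed points of the source stack, together with the characterization of closed points of $\cM_{n,V,c}^{\rm Kss}$ as the K-polystable pairs.

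The main obstacle is essentially historical depth: each of boundedness, openness of K-semistability, reductivity of stabilizers, and uniqueness of polystable degenerations represented a substantial research program. Given those results, the present theorem is a packaging statement, and a detailed account is given in \cite[Theorem 2.21]{XZ19}.
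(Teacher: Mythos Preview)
Your proposal is correct and matches the paper's approach exactly: the paper does not give a proof but simply records that the theorem is a consequence of boundedness \cite{Jia17,Che18,LLX18,XZ20}, openness of K-semistability \cite{BLX19,Xu20}, and the existence of a separated good moduli space \cite{BX19,ABHLX19}, referring to \cite[Theorem 2.21]{XZ19} for details---precisely the references and the final pointer you give. One minor attribution point: the abstract existence criterion (via $\Theta$-reductivity and S-completeness) is in \cite{AHLH18}, with \cite{ABHLX19} carrying out the verification for the K-moduli stack.
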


The next result on the invariance of certain Hilbert functions is a consequence of \cite{Kol17}.

\begin{prop}\label{p:invarianceEuler}
Let $(X,D:=c\Delta)\to T$ be a family of log Fano pairs. If $mD$ is a $\Z$-divisor,
then $t \mapsto  \chi(X_t, \cO_{X_t}(- m(K_{X_t}+D_t))$ is locally constant.
\end{prop}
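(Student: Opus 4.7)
The plan is to exhibit a single coherent sheaf $\cL_m$ on $X$, flat over $T$, whose fiberwise restrictions agree with $\cO_{X_t}(-m(K_{X_t}+D_t))$, and then invoke Grothendieck's invariance of Euler characteristics for flat proper families.

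First, I would define $\cL_m$ as the divisorial (reflexive) sheaf on $X$ associated to the $\Q$-Cartier $\Q$-divisor $-m(K_{X/T}+D)$. Since $K_{X/T}+D$ is $\Q$-Cartier, pick a positive integer $N$ with $N(K_{X/T}+D)$ Cartier; then $\cO_X(-\ell N(K_{X/T}+D))$ is a genuine line bundle on $X$ for every $\ell \in \N$, and hence automatically flat over $T$ with formation commuting with base change. This handles the subsequence of $m$ that are multiples of $N$, but for a general $m$ with $mD$ integral it is the K-flatness hypothesis on $\Delta$ that intervenes.

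The main step is to show that $\cL_m$ is flat over $T$ and that its formation commutes with base change, meaning that for every $t \in T$ the natural map $\cL_m \otimes_{\cO_T} \kappa(t) \to \cO_{X_t}(-m(K_{X_t}+D_t))$ is an isomorphism. The natural way to see this is to split off the boundary part: write (up to reflexive hull) $\cL_m = \omega_{X/T}^{[-m]} \otimes \cO_X(-mD)$ and treat the two factors separately. The dualizing sheaf part $\omega_{X/T}^{[-m]}$ is well behaved since $X/T$ has normal (hence $S_2$) fibers, and the boundary part $\cO_X(-mD) = \cO_X(-mc\Delta)$ (an integral divisor by the assumption on $m$) is governed by the K-flatness of $\Delta$: this is precisely the property that \cite{Kol17} builds into the notion of a K-flat family of relative Mumford divisors, ensuring that the associated divisorial sheaves are flat over $T$ and commute with base change. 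Combined with the global $\Q$-Cartier assumption on $K_{X/T}+D$, this gives the desired property for $\cL_m$.

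Once flatness and base change are established, Grothendieck's theorem on cohomology and base change for flat proper families of coherent sheaves immediately yields that $t \mapsto \chi(X_t, \cL_m|_{X_t}) = \chi(X_t, \cO_{X_t}(-m(K_{X_t}+D_t)))$ is locally constant on $T$. The main obstacle is entirely in the flatness/base-change claim; everything else is standard. That obstacle is precisely the one that K-flatness was introduced to handle, which is why the proposition is simply cited as a consequence of \cite{Kol17}.
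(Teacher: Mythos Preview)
Your overall strategy matches the paper's: exhibit a single sheaf on $X$ flat over $T$ with the correct fiberwise restrictions, then apply invariance of the Euler characteristic. You also correctly identify that the substantive input is \cite{Kol17}. Two differences are worth noting.

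First, the paper reduces immediately to $T = \Spec(R)$ with $R$ a DVR, which is legitimate since local constancy of an integer-valued function can be tested along DVR specializations. This is a genuine simplification: over a DVR, flatness of a coherent sheaf is torsion-freeness, and the paper then cites \cite[Proposition 2.76.2 and Definition 3.52]{Kol17} directly for the flatness and base-change compatibility of $\cO_X(-m(K_{X/T}+D))$, without any decomposition.

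Second, your splitting $\cL_m = \omega_{X/T}^{[-m]} \otimes \cO_X(-mD)$ ``up to reflexive hull'' is an unnecessary detour with a real pitfall: flatness over $T$ and compatibility with base change are not in general preserved by tensoring two reflexive sheaves and then taking reflexive hull, so knowing these properties for each factor does not yield them for $\cL_m$. Your subsequent appeal to the $\Q$-Cartier hypothesis to repair this is too vague to constitute an argument. The results in \cite{Kol17} treat the combined sheaf directly, so the splitting buys nothing and should be dropped.
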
 

\begin{proof}
It suffices to consider the case when $T$ is the spectrum of a DVR. 
In this case,
 $\cO_{X}(- m(K_{X/T}+D))$ is flat over $T$ and $\cO_{X}\left(- m(K_{X/T}+D) \right)\vert_{X_t} \simeq \cO_{X_t}\left(- m(K_{X_t}+D_t)\right)$ for all $t\in T$ by  \cite[Proposition 2.76.2 and Definition 3.52]{Kol17}. Therefore, the function is constant on $T$.
\end{proof}
Let $c_{\rm den}$ denote the denominator of $c$. The above proposition implies that if  $[(X, D:= c\Delta ) \to T] \in \cM_{n,V,c}^{\rm Fano}(T)$, then
the Hilbert function 
\[
h: c_{\rm den} \Z\to \Z  \quad \text{ defined by } \quad h(m) : = \chi \left( X_t, \cO_{X_t}(-m(K_{X_t}+D_t)) \right)
\]
is independent of $t$ in a fixed connected component of $T$. For such a Hilbert function $h$, let $\mathcal{M}^{\rm Fano}_{h,c} \subset \mathcal{M}^{\rm Fano}_{n,V,c}$
and 
 $\mathcal{M}^{\rm Kss}_{h,c} \subset \mathcal{M}^{\rm Kss}_{n,V,c}$
 denote the subfunctors
 parametrizing families of log Fano pairs (resp., K-semistable log Fano pairs) with Hilbert function $h$. 
Note that \begin{equation}\label{eq:hilbert}
\cM_{n,V,c}^{\rm Fano}= \bigsqcup_{h} \cM_{h,c}^{\rm Fano}
\quad 
\text{ and } 
\quad 
\cM_{n,V,c}^{\rm Kss}= \bigsqcup_{h} \cM_{h,c}^{\rm Kss}
\end{equation}
 where the union runs through Hilbert functions $h$ of log Fano pairs $(X,D: = c \Delta)$ of dimension $n$ and volume $V$.

\begin{rem}
When $c=1$, $\cM_{n,V,1}^{\rm Kss}$ parametrize families of  K-semistable $\Q$-Fano varieties  of dimension $n$ and volume $V$, since if $(X,D)$ is a log Fano pair and $D$ is a $\Z$-divisors, then $D=0$. 
Note that
$\cM_{n,V,1}^{\rm Kss}$  differs from the moduli stack of K-semistable $\Q$-Fano varieties $\cM_{n,V}^{\rm Kss}$  considered in \cite{BX19,ABHLX19}. Indeed, while $\cM_{n,V,1}^{\rm Kss}$ parameterizes families satisfying the   so called Viehweg's condition, $\cM_{n,V}^{\rm Kss}$  considers families satisfying Koll\'ar's condition  (see \cite[\S1]{BX19}). These two conditions in general could be different (see \cite{AK19}), though they agree when the base is reduced by \cite[Theorem 3.68]{Kol17}. 
\end{rem}

 \subsection{Theta-stratificaitons}\label{ss-thetastratification}
The notion of a $\Theta$-stratification originated in \cite{HL14} and generalizes the Harder-Narasimhan stratification of the moduli of coherent sheaves on a projective scheme as well as the Kempf-Ness stratification in GIT to more general stacks.

\subsubsection{Filtrations}
Let   $\Theta$ denote the quotient stack $[\A^1/\G_m]$. 
Maps from $\Theta$ into a stack form the basis of stability notions in \cite{HL14}. In the case of K-stability, a special test configuration of a log Fano pair $[(X,D)] \in \cM_{n,V,c}^{\rm Fano}$ is equivalent to the data of a map $f: \Theta \to \cM_{n,V,c}^{\rm Fano}(k)$ with an isomorphism $f(1) \simeq [(X,D)]$.

For a stack $\sX$, let $\underline{\Map}(\Theta,\sX)$ denote the mapping stack parametrizing morphisms from $\Theta$ to $\sX$ and write ${\rm ev}_1$ and ${\rm ev}_0$ for the evaluation maps $\underline{\Map}(\Theta,\sX)  \to \sX$. We know that $\underline{\Map}(\Theta,\sX)$ is an algebraic stack when $\sX$ is a locally finite type algebraic stack over $k$ with affine stabilizers \cite{HLP20, HR19}. When $\sX = [X/G]$ is a quotient stack where $G$ is an algebraic group acting on a scheme $X$, the above mapping stack has a concrete description as follows (see  \cite[Theorem 1.37]{HL14}):
\[
\underline{\Map}(\Theta,[X/G])=\bigsqcup_{\lambda\in \Lambda} [X_\lambda/P_\lambda].
\]
Here $\Lambda$ is the complete set of conjugacy classes of 1-PS' $\lambda:\mathbb{G}_m\to G$, $X_\lambda$ is the union of Bialynicki-Birula strata of $X$ associated to $\lambda$ which equals $\{x\in X\mid \lim_{t\to 0}\lambda(t)\cdot x\textrm{ exists} \}$ set theoretically, and $P_\lambda=\{g\in G\mid \lim_{t\to 0} \lambda(t) g\lambda(t)^{-1}\textrm{ exists}\}$.

\subsubsection{Definition}
\begin{defn}\label{d:ThetaStrat}\footnote{This definition differs from that in \cite{HL14,AHLH18} by a sign convention to conform to the convention in the K-stability literature that non-negativity of the Futaki invariant corresponds to semistability.}
Let $\sX$ be an algebraic stack locally of finite type over $k$ with affine stabilizer groups.
\begin{enumerate}
    \item A \emph{$\Theta$-stratum} in $\sX$ is a union of connected components $\sS\subset \uMap(\Theta, \sX)$ such that ${\rm ev}_1:\sS\to \sX$ is a closed immersion. Informally, we sometimes identify $\sS$ with the closed substack ${\rm ev}_1(\sS) \subset \sX$.
    \item A \emph{$\Theta$-stratification} of $\sX$ indexed by a totally order set $\Gamma$ is a cover of $\sX$ by open substacks $\sX_{\geq c}$ for $c\in \Gamma$ such that $\sX_{\geq c'} \subset \sX_{\geq c}$ for $c'>c$, along with a $\Theta$-stratum $\sS_c\subset \uMap(\Theta, \sX_{\geq c})$ in each $\sX_{\geq c}$ whose complement in $\sX_{\geq c}$ is $ \cup_{c' >c} \sX_{\geq c'}$. We assume that for each $x\in |\sX|$ the subset $\{c \in \Gamma \, \vert \, x\in \sX_{\geq c}\}$ has a maximal element.    We assume for convenience that $\Gamma$ has a maximal element
$0\in \Gamma$.
    \item A $\Theta$-stratification is \emph{well-ordered} if for each $x\in |\sX|$, every nonempty subset of $\{ c\in \Gamma \, \vert \, {\rm ev_1}(\sS_{c}) \cap \overline{ \{x \} }\neq \emptyset \}$ has a maximal element.
\end{enumerate}
\end{defn}
Given a $\Theta$-stratification, we denote by $\sX^{\rm ss}: = \sX_{\geq 0}$ the \emph{semistable} locus of $\sX$. 
For any $x\in \sX(k) \setminus \sX^{\rm ss}(k)$, the unique stratum $\sS_c$ such that $x\in {\rm ev}_1(\sS_c)$ determines a canonical map $f:\Theta  \to \sX$ with $f(1)=x$. This map is referred to as the \emph{HN-filtration} of $x$ in \cite{HL14}.

\begin{example}
Let $C$ be a smooth projective curve over a field $k$, and let $\sX$ be the stack of vector bundles on $C$ of rank $n$ and degree $d$. Then the Harder-Narasimhan-Shatz stratification of $\sX$ \cite{HN74,Sha77} is a $\Theta$-stratification. For an unstable vector bundle $E$ on $C$, the Harder-Narasimhan filtration is the unique filtration $0\subsetneq E_p \subsetneq \cdots \subsetneq E_0 = E$ such that $i^{th}$ associated graded piece $E_i/E_{i+1}$ is semistable and locally free, and the slope $\mu_i := \deg(E_i/E_{i+1})/ \mathrm{rank}(E_i/E_{i+1})$ is strictly increasing with $i$. This encodes a map $f : \Theta_k \to \sX$ as follows:

One considers the graded sheaf of $\cO_{C}[t]$-modules $\cO_{C}[t^{\pm 1}] \otimes_{\cO_{C}} E$. Each $E_i$ defines a graded submodule $\cO_{C}[t] \otimes_{\cO_{C}} E_i$, and we combine these into the graded submodule
\[
\mathcal{E} := \sum_{i=0}^p t^{-n! \mu_i} \cdot \cO_{C}[t] \otimes_{\cO_{C}} E_i \subset E \otimes_{\cO_{C}} \cO_{C}[t^{\pm 1}].
\]
The factor of $n!$ guarantees that all of the exponents are integers. One can check that the $\mathbb{G}_m$-equivariant quasi-coherent sheaf on $\mathrm{Spec}_{C}(\cO_{C}[t]) \cong \mathbb{A}^1 \times C$ corresponding to $\mathcal{E}$ is locally free. Hence this equivariant sheaf defines a map $f : \Theta_k \to \sX$.

It is shown in \cite[Section 6]{HL14} that the set of maps constructed in this way defines an open substack $\sS \subset \uMap(\Theta,\sX)$, and $\mathrm{ev}_1 : \uMap(\Theta,\sX) \to \sX$ identifies each connected component of $\sS$ with the corresponding Harder-Narsimhan-Shatz stratum in $\sX$.
\end{example}

\section{Log Fano pairs with torus actions}\label{s:logFanotorus}
In this section, we collect basic results on the behaviour of the Futaki invariant, minimum norm, and $L^2$ norm for one paramater subgroups of a torus acting on a log Fano pair. 
While the results in Sections \ref{ss:momentp} and \ref{ss:quadform} are well known in the K-stability and K\"ahler-Einstein metrics literature (see e.g. \cite{FM95, WZ04, Sze08}), we provide short algebraic proofs for the convenience of the reader.

Let $(X,D)$ be an $n$-dimensional log Fano pair with an action of a $d$-dimensional torus $\T:= \G_m^d$. 
We write $N: = \Hom(\G_m, \T)$ and $M:= \Hom(\T,\G_m)$ for the \emph{coweight} and \emph{weight lattices}.
The lattices are isomorphic to $\Z^d$ and admit a perfect pairing ${\langle \,  ,  \rangle : M \times N \to \Z}$. 
 For $\mathbb{K} \in \{ \Q,\R \}$,  write
 $N_{\mathbb{K}}:= N \otimes_\mathbb{Z} \K$ 
 and  
 $M_{\K}:= M \otimes_\mathbb{Z} \K$
  for the  corresponding  vector spaces.
  
 Fix a positive integer $r$ such that $L:=-r(K_X+D)$ is a Cartier divisor and write
\[
R(X,L)
:= 
\bigoplus_{m \in \N} R_m 
=
\bigoplus_{m \in \N} H^0\left(X,\cO_X(L)\right)
\]
for the section ring of $L$. 
Set $N_m:= \dim R_m$ for each $m\geq1$.

The $\T$-action on $X$ 
induces a canonical action on each vector space $R_m$. This gives a direct sum decomposition
$R_m = \bigoplus_{u\in M} R_{m,u}$, where 
\[
R_{m,u}
:=
\{ s \in R_m \, \vert \, {\bf t} \cdot s = u({\bf t}) s \text{ for all } {\bf t}\in \T \}
\]
is the $u$\emph{-weight space}, satisfying $R_{m,u}\cdot R_{m',u'} \subseteq R_{m+m',u+u'}$. 

Note that an element  $v\in N := \Hom(\G_m, \T)$ induces an action of $\G_m$ on $(X,D)$ and hence $R$. 
If we write $R_m = \bigoplus_{\lambda \in \Z} R_{m,\lambda}$ for the weight decomposition with respect to the $\G_m$-action induced by $v$, then 
\begin{equation} \label{e:weights}
    R_{m, \lambda }= \bigoplus_{u \in M \vert \langle u,v\rangle = \lambda} R_{m,u}.
\end{equation}

\subsection{Moment polytope and  barycenter}\label{ss:momentp}
For each integer $m \geq 1$, we set 
\[
P_m
:= 
{\rm conv.hull}( u \in M \, \vert \, R_{m,u}\neq 0 ) \subseteq M_{\R}
.\]
The \emph{moment polytope}  of $(X,D)$ with respect to $\T$ is given by
\[
\textstyle
P: = {\rm conv.hull}\Big( \bigcup_{m\geq 1 }   \frac{1}{mr} \cdot P_m \Big)
. \]
Since $R$ is a finitely generated algebra,
$P$  may be expressed as the convex hull of finitely many points in $M_\Q$.
Furthermore, $P= \frac{1}{mr}P_m$ for $m\geq1$ sufficiently divisible. 
The \emph{weighted barycenter} of $P$ is given by 
\begin{equation}\label{eq:baryquad}
b_P
:= 
\lim_{m \to \infty}    \frac{1}{ mr N_m} \sum_{u \in M}   \dim(R_{m,u}) u,
\end{equation}
where the limit is taken in $M_\R$. 

\begin{lem}\label{l:baryclim}
The above limit  exists and lies in $M_{\Q}$. Additionally, for $v\in N$, 
\[
\Fut(X,D;v) 
= 
- \langle b_P, v \rangle 
\quad \text{ and } \quad
\mnorm{X,D;v}
 = 
 \langle b_p, v\rangle - \min_{u \in P} \langle u,v\rangle.
\]
\end{lem}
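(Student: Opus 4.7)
The plan is to reduce all three statements to the one-parameter asymptotic expansions recalled in Section~\ref{ss:furtherinvariants} by choosing a basis of $N$.

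First I would define the vector-valued sequence
\[
c_m := \frac{1}{mr N_m} \sum_{u \in M} \dim(R_{m,u})\, u \;\in\; M_\R,
\]
so that the content of the first claim is that $c_m$ converges to a rational point $b_P$. Fix a basis $e_1,\dots,e_d$ of $N$ with dual basis $e_1^*,\dots,e_d^*$ of $M$, and consider the one-parameter subgroup $v=e_i$. By \eqref{e:weights}, the $\G_m$-weight decomposition of $R_m$ induced by $e_i$ satisfies $R_{m,\lambda}=\bigoplus_{\langle u,e_i\rangle=\lambda} R_{m,u}$, so
\[
\langle c_m, e_i\rangle \;=\; \frac{1}{mr N_m}\sum_{u}\dim(R_{m,u})\langle u,e_i\rangle \;=\; \frac{w_m(e_i)}{mr N_m}.
\]
Applying the asymptotic expansion from Section~\ref{ss:furtherinvariants} to the $\G_m$-action $e_i$, the right-hand side admits a Laurent series with rational coefficients whose leading term is $F_0(e_i)=-\Fut(X,D;e_i)\in \Q$. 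Hence each coordinate $\langle c_m,e_i\rangle$ converges to a rational number, proving that $b_P:=\lim_{m\to\infty} c_m$ exists and lies in $M_\Q$.

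Next I would verify the Futaki formula. For arbitrary $v\in N$, the same argument applied to the $\G_m$-action induced by $v$ yields
\[
\Fut(X,D;v) \;=\; -\lim_{m\to\infty}\frac{w_m(v)}{mr N_m} \;=\; -\lim_{m\to\infty} \langle c_m, v\rangle \;=\; -\langle b_P, v\rangle.
\]

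For the minimum norm, I would first rewrite $\lambda_{\min,m}$ in terms of $P_m$. Since $R_{m,\lambda}=\bigoplus_{\langle u,v\rangle=\lambda} R_{m,u}$, we have
\[
\lambda_{\min,m}(v) \;=\; \min\{\langle u,v\rangle : u\in M,\ R_{m,u}\neq 0\} \;=\; \min_{u\in P_m}\langle u,v\rangle,
\]
the last equality since a linear functional on a polytope attains its minimum at a vertex. The ring structure $R_{m,u}\cdot R_{m',u'}\subseteq R_{m+m',u+u'}$ implies $P_m+P_{m'}\subseteq P_{m+m'}$, so $m\mapsto \lambda_{\min,m}(v)$ is subadditive. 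By Fekete's lemma, the infimum $\lambda_{\min}(v)=\inf_m \frac{\lambda_{\min,m}(v)}{mr}$ equals the limit, and since $\tfrac{1}{mr}P_m=P$ for $m$ sufficiently divisible (using finite generation of $R$), this limit equals $\min_{u\in P}\langle u,v\rangle$. Combining with the Futaki formula,
\[
\mnorm{X,D;v} \;=\; F_0(v)-\lambda_{\min}(v) \;=\; \langle b_P, v\rangle - \min_{u\in P}\langle u,v\rangle.
\]

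There is no serious obstacle here; the only mildly delicate point is the passage from infimum to limit in the minimum norm, which is handled by subadditivity and Fekete. Everything else is a direct translation between the weight lattice picture and the $\G_m$-asymptotics already established in Section~\ref{ss:furtherinvariants}.
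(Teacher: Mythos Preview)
Your proof is correct and follows essentially the same approach as the paper: reduce the existence and rationality of $b_P$ to the one-variable asymptotics via \eqref{e:weights}, identify the limit with $-\Fut(X,D;v)$, and compute $\lambda_{\min}$ by expressing $\lambda_{\min,m}$ as a minimum over $P_m$ and using $P=\tfrac{1}{mr}P_m$ for $m$ sufficiently divisible. The Fekete argument is a harmless extra step; the paper simply observes that $\tfrac{1}{mr}P_m\subseteq P$ for all $m$ (so the infimum is bounded below by $\min_{u\in P}\langle u,v\rangle$) with equality along the sufficiently divisible subsequence, which already pins down $\lambda_{\min}$ without invoking subadditivity.
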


\begin{proof}
To see that the limit exists and lies in $M_{\Q}$, it suffices to show that, for each $v\in N$,
\begin{equation}\label{p:limitsv}
\lim_{m \to \infty} \frac{1}{mr N_m}  \sum_{u \in M} \dim (R_{m,u}) \langle u,v\rangle 
\end{equation}
is a rational number.
To prove the latter, fix $v\in N$ and  write $R_m = \bigoplus_{\lambda \in \Z} R_{m,\lambda}$ for the weight decomposition with respect to the $\G_m$-action induced by $v$.
Using \eqref{e:weights}, we see
\[
\lim_{m \to \infty}   \frac{1}{mr N_m} \sum_{u \in M} \dim(R_{m,u})  \langle u,v\rangle 
=
 \lim_{m \to \infty} \frac{1}{mr N_m}  \sum_{\lambda \in \Z}   \lambda \dim (R_{m,\lambda})
 = -\Fut(X,D;v),
\]
which is  rational. Therefore,  $b_P \in N_\Q$ and the formula for the Futaki invariant holds.

To deduce the formula for the minimum norm, note that  
$\lambda_{\min,m} = 
\min_{u \in P_m}  \langle u,v \rangle.$
Since $P= \frac{1}{mr} P_m$ for $m\geq 1$ sufficiently divisible, $\lambda_{\min} = \min_{u \in P}  \langle u,v \rangle$ and the formula follows. \end{proof}
 
 \subsection{Associated quadratic form}\label{ss:quadform}
The \emph{associated quadratic form} $Q: N_{\R} \to \R$ of the weight decomposition is defined by 
\[
Q(v) 
: =
 \lim_{m \to \infty} \frac{1}{ (mr)^2 N_m}   \sum_{u \in M}  \dim( R_{m,u})  \langle u-mr b_P, v \rangle ^2
\] 
 
 \begin{lem}\label{l:Qlim}
The function $Q$ is a rational non-negative quadratic form (in particular, the above limit exists). 
Furthermore, $ Q(v) =\lnorm{X,D;v}^2$ for any $v\in N$.
 \end{lem}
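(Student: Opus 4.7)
The plan is to reduce the limit defining $Q(v)$ for $v \in N$ to the Hilbert expansion coefficients $F_0, Q_0$ from Section~\ref{ss:furtherinvariants}, and then extend the identity to all of $N_\R$ by polarization.

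Fixing $v \in N$, I would write $R_m = \bigoplus_{\lambda \in \Z} R_{m,\lambda}$ for the weight decomposition of the induced $\G_m$-action. The refinement \eqref{e:weights} gives
\begin{equation*}
\sum_{u \in M} \dim(R_{m,u})\, \langle u, v\rangle^k \;=\; \sum_{\lambda \in \Z} \lambda^k \dim R_{m,\lambda}
\end{equation*}
for $k = 0, 1, 2$, the three right-hand sides being $N_m$, $w_m$, and $q_m$ respectively. Expanding the square $\langle u - mrb_P, v\rangle^2$ then yields
\begin{equation*}
\sum_{u \in M} \dim(R_{m,u})\, \langle u - mrb_P, v\rangle^2 \;=\; q_m \;-\; 2mr\langle b_P, v\rangle w_m \;+\; (mr)^2\langle b_P, v\rangle^2 N_m.
\end{equation*}
Dividing by $(mr)^2 N_m$ and letting $m \to \infty$, the expansions recalled in Section~\ref{ss:furtherinvariants} together with the identity $F_0 = \langle b_P, v\rangle$ extracted from Lemma~\ref{l:baryclim} give
\begin{equation*}
Q(v) \;=\; Q_0 - 2F_0^2 + F_0^2 \;=\; Q_0 - F_0^2 \;=\; \lnorm{X,D;v}^2.
\end{equation*}
This simultaneously establishes existence of the limit for $v \in N$, the stated formula, and non-negativity and rationality of $Q$ at lattice points.

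For the quadratic form property on all of $N_\R$, I would set
\begin{equation*}
Q_m(v) \;:=\; \frac{1}{(mr)^2 N_m} \sum_{u \in M} \dim(R_{m,u})\, \langle u - mrb_P, v\rangle^2,
\end{equation*}
which is manifestly a non-negative quadratic form in $v$ with associated symmetric bilinear form $B_m$. Fixing a basis $e_1, \ldots, e_d$ of $N$, the previous paragraph shows $Q_m(e_i)$ and $Q_m(e_i + e_j)$ converge to rational limits for all $i, j$. Polarization
\begin{equation*}
B_m(e_i, e_j) \;=\; \tfrac{1}{2}\bigl(Q_m(e_i+e_j) - Q_m(e_i) - Q_m(e_j)\bigr)
\end{equation*}
then forces every matrix entry of $B_m$ to converge to a rational number, so $Q_m$ converges pointwise on $N_\R$ to a non-negative rational quadratic form $Q$.

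The argument is a direct computation once Lemma~\ref{l:baryclim} is in hand, so I do not anticipate a substantial obstacle. The one subtlety is that pointwise convergence on the lattice $N$ does not by itself imply the limit is a quadratic form on $N_\R$; this is precisely why polarization must be applied not only on the basis but also on the pairwise sums $e_i + e_j$, in order to recover convergence of the full bilinear form.
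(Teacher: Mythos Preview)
Your proposal is correct and follows essentially the same route as the paper: expand the square, pass to the one-parameter weight decomposition via \eqref{e:weights}, and identify the limit with $Q_0 - F_0^2$. The only cosmetic difference is that the paper asserts in one line that convergence of $Q_m(v)$ at all lattice points $v\in N$ forces convergence of the sequence of quadratic forms, whereas you spell out the polarization on basis vectors and their pairwise sums explicitly; this is the same argument, just made more visible.
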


\begin{proof}
For each $m\geq 1$, consider the function $Q_m:N_{\R}\to \R$ defined by 
\[
Q_m(v) 
= \frac{1}{ (mr)^2N_m}   \sum_{u \in M} \dim (R_{m,u})\langle u-mr b_P, v \rangle ^2 
\]
is a non-negative rational quadratic form. 
To see that $(Q_m)_m$ converges to a non-negative quadratic form,
 it suffices to show that $\lim_m Q_m(v) \in \Q$ for each $v\in N$.

To prove the latter, fix $v \in N$ and write 
 $R_m = \bigoplus_{\lambda \in \Z} R_{m,\lambda}$ for the weight decomposition with respect to the $\G_m$-action induced by $v$.
 Note that 
 \begin{align*}
 Q(v) &= 
 \lim_{m \to \infty}
 \frac{1}{(mr)^2 N_m} \sum_{u\in M} 
 \left( \langle u, v\rangle^2
-2mr  \langle u,v \rangle \cdot \langle b_P, v \rangle  + (mr)^2\langle  b_P, v \rangle^2 \right) \dim R_{m,u} \\
 & = 
 \lim_{m \to \infty} 
 \left( \frac{ 1}{(mr)^2 N_m} \sum_{u \in M} \langle u,v \rangle^2 \dim R_{m,u}
 \right) - \langle b_P,v\rangle^2
 \end{align*}
Using  \eqref{e:weights},  the right hand side above  equals
\[
\lim_{m \to \infty} \frac{1}{ (mr)^2 N_m}   \sum_{\lambda \in \Z}   \lambda^2 \dim R_{m,\lambda}
-\left(
\lim_{m \to \infty} \frac{1}{ mr N_m}   \sum_{\lambda \in \Z}  
 \lambda \dim R_{m,\lambda} \right)^2
\]
 which is precisely  $\lnorm{X,D;v}^2$. Since
 the latter value is rational, the result follows.
\end{proof}

 \subsection{Stability function} \label{S:stability_function}
We consider the three functions  $N_{\R} \to \R$ given by
\[
\Fut(v): =- 
\langle b_P, v \rangle,
\quad   
\mnorm{v}:=
\langle b_P ,v \rangle - \min_{u \in P} \langle u,v \rangle , 
\quad 
\text{ and }\quad
\lnorm{v} = \sqrt{Q(v)}
.\]
By Lemmas \ref{l:baryclim} and \ref{l:Qlim},
these agree with the corresponding invariants defined in Section \ref{ss:furtherinvariants}, when $v\in N$.
Observe that (i) $\Fut(\, \cdot \,)$ is rational linear, 
(ii) $\mnorm{ \, \cdot \,}$ is rational piecewise linear (by \emph{rational piecewise linear}, we mean that there is a decomposition of $N_{\R}$ into rational polyhedral cones, such that on each cone the function is rational linear)
and convex, 
and (iii) $\lnorm{ \, \cdot \, }^2$ is a non-negative rational quadratic form. (Note that the convexity in (ii) follows from the fact that $v\mapsto \langle b_p,v\rangle$ is linear and $v\mapsto \min_{u\in P} \langle u,v\rangle$ is concave, since it is the minimum of a collection of concave functions.)

\begin{lem}\label{l:positive}
If the natural map $\T \to \Aut(X,D)$ has finite kernel, then $\lVert \,\cdot \, \rVert_{\mathrm{m}}$ and  $\lVert \, \cdot \, \rVert_{2}$  are  positive on $N_\R\setminus 0 $.
\end{lem}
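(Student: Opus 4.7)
The plan is to reduce positivity of both $\mnorm{\cdot}$ and $\lnorm{\cdot}$ on $N_\R \setminus 0$ to positivity on the integer lattice $N \setminus 0$, and then to handle the integral case by producing an explicit nontrivial test configuration whose norms coincide with $\mnorm{v}$ and $\lnorm{v}$.

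For the reduction, I would use the rational structure emphasized at the end of Section \ref{S:stability_function}: the function $\mnorm{\cdot}$ is convex, rational piecewise linear, and homogeneous of degree one, so its zero locus is a rational polyhedral cone; and $\lnorm{\cdot}^2 = Q$ is a rational non-negative quadratic form, so its zero locus is a rational linear subspace. In either case, a non-zero vector in the zero locus forces the existence of a non-zero rational vector there, and after clearing denominators a non-zero $v \in N$. Thus it suffices to rule out zeros on $N \setminus 0$.

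Given such a $v$, I would form the product test configuration $(\mathcal{X}, \mathcal{D}) := (X \times \A^1, D \times \A^1)$ endowed with the twisted $\G_m$-action $t \cdot (x, s) = (v(t) \cdot x,\, t s)$. By construction its central fiber is $(X,D)$ equipped with the $v$-action, so the definitions in Section \ref{ss:furtherinvariants} immediately give $\mnorm{\mathcal{X}, \mathcal{D}} = \mnorm{v}$ and $\lnorm{\mathcal{X}, \mathcal{D}} = \lnorm{v}$. This test configuration is trivial if and only if the $v$-action on $(X,D)$ is trivial; under the finite-kernel hypothesis, triviality of that action would place the positive-dimensional subtorus $v(\G_m) \subseteq \T$ inside the kernel of $\T \to \Aut(X,D)$, contradicting the assumption. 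Hence $(\mathcal{X}, \mathcal{D})$ is nontrivial, and the norm-vs-triviality statements recalled in Section \ref{ss:furtherinvariants} (namely \cite[Corollary B]{BHJ17} for the minimum norm and \cite[Theorem 1.3]{Der16} for the $L^2$ norm) force both norms to be strictly positive.

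I do not expect a serious obstacle: the only step requiring care is verifying that the product test configuration just described has the correct central fiber so that the two norm equalities hold, and this is immediate from the definitions for a $\G_m$-action. As an alternative path, once $\mnorm{v} > 0$ is established one could deduce $\lnorm{v} > 0$ from the inequality \eqref{e:mnorm<clnorm}, but this shortcut is unnecessary since both norms independently detect triviality of a test configuration.
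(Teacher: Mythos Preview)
Your proposal is correct and follows essentially the same approach as the paper: positivity on $N\setminus 0$ comes from the fact that a nonzero cocharacter induces a nontrivial $\G_m$-action (hence a nontrivial product test configuration, whose norms are positive by the cited results of \cite{BHJ17,Der16}), and the extension to $N_\R\setminus 0$ uses exactly the rational piecewise-linear/quadratic structure recorded as properties (ii) and (iii). The paper's proof is terser---it does not spell out the product test configuration or the rational-zero-locus argument---but the logic is the same, only presented in the opposite order.
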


\begin{proof}
Since $\T\to \Aut(X,D)$  has finite kernel,
each $v\in N\setminus 0$  induces a non-trivial $\G_m$-action on $(X,D)$.
Therefore,  $\mnorm{v}$ and $\lnorm{v}$ are strictly positive on $N\setminus 0$.
Using properties (ii) and (iii) above, it follows that the functionals are also strictly positive on $N_\R\setminus 0$.
 \end{proof}

When $\T\to \Aut(X,D)$ has finite kernel, we consider the bi-valued \emph{stability function} $\bm{\mu}: N_{\R} \setminus 0 \to \R^2$ defined by 
$\bmu(v) : =  \left( \mu_1(v), \mu_2(v) \right)$
\[
\mu_1 (v) := \frac{ \Fut(v)}{\mnorm{v}}
\quad \quad \text{ and } \quad \quad
\mu_2 (v) := \frac{ \Fut(v)}{\lnorm{v}}.
\]
and  endow $\R^2$ with the lexicographic order. 
Since $\mu_1$ and $\mu_2$ are invariant with respect to scaling by $\mathbb{R}_{>0}$, $\bmu$ induces a function on $\Delta_\mathbb{R}: = (N_{\mathbb{R}}\setminus 0)/ \mathbb{R}_{>0}$.

We note the following quasi-convexity property of $\bmu$.

\begin{prop}\label{p:quasiconvex}
Assume $\T \to \Aut(X,D)$ has finite kernel.
Fix points $v,w  \in N_{\R}\setminus 0$ with distinct images in $\Delta_{\R}$ and $t\in (0,1)$.  If $\Fut(v)$ and $\Fut(w)$ are $<0$, then 
\[
\mu_{i} (tv+(1-t)w) \leq \max\{ \mu_i( v) , \mu_i(w) \} \quad \text{ for } i=1,2 .
\]
Furthermore, if $i=2$, then the inequality is strict.
\end{prop}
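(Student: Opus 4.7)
The plan is to exploit three structural features already isolated in Section \ref{S:stability_function}: $\Fut$ is linear, $\mnorm{\,\cdot\,}$ is convex, and $\lnorm{\,\cdot\,}$ is the norm coming from a positive-definite quadratic form $Q$ (strictly convex in the Euclidean sense). The whole thing is a sign-tracking exercise once we unfold the definitions.

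First I would normalize: write $\|\cdot\|_*$ for either $\mnorm{\,\cdot\,}$ or $\lnorm{\,\cdot\,}$, set $u=tv+(1-t)w$, and let $M=\max\{\mu_i(v),\mu_i(w)\}$. Since $\Fut(v),\Fut(w)<0$ and $\|v\|_*,\|w\|_*>0$ by Lemma \ref{l:positive}, both $\mu_i(v)$ and $\mu_i(w)$ are negative, so $M<0$. Unfolding the bound $\mu_i(v),\mu_i(w)\le M$ (and using $\|v\|_*,\|w\|_*>0$) gives
\[
\Fut(v)\le M\|v\|_*,\qquad \Fut(w)\le M\|w\|_*.
\]
Then linearity of $\Fut$ and convexity of $\|\cdot\|_*$ combine as
\[
\Fut(u)=t\Fut(v)+(1-t)\Fut(w)\le M\bigl(t\|v\|_*+(1-t)\|w\|_*\bigr)\le M\|u\|_*,
\]
where in the last inequality we used $\|u\|_*\le t\|v\|_*+(1-t)\|w\|_*$ together with $M<0$ (which flips the inequality). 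Dividing by $\|u\|_*>0$ yields $\mu_i(u)\le M$, which is the desired estimate for both $i=1$ and $i=2$.

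For the strict version when $i=2$, I would invoke strict convexity of the Euclidean-type norm $\lnorm{\,\cdot\,}=\sqrt{Q}$: since $Q$ is a positive-definite quadratic form on $N_\R$ (Lemmas \ref{l:Qlim} and \ref{l:positive}), the triangle inequality $\lnorm{u}\le t\lnorm{v}+(1-t)\lnorm{w}$ is strict unless $v$ and $w$ are positive scalar multiples of each other. The hypothesis that $v,w$ have distinct images in $\Delta_\R=(N_\R\setminus 0)/\R_{>0}$ rules this out, so $\lnorm{u}<t\lnorm{v}+(1-t)\lnorm{w}$. Multiplying by $M<0$ strengthens the second inequality in the chain above to a strict inequality, yielding $\mu_2(u)<M$.

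The main subtlety — and really the only place one can slip up — is the sign bookkeeping around $M<0$: one has to be sure that the direction of inequalities is preserved when dividing through by a negative scalar, and that the hypothesis $\Fut(v),\Fut(w)<0$ (not just K-instability) is exactly what guarantees $M<0$. Once that is accounted for, no further input from birational geometry or K-stability is needed; the statement is a general fact about ratios of a linear functional and a (strictly) convex norm, applied to the concrete invariants of Section \ref{S:stability_function}.
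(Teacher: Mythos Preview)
Your proof is correct and uses the same ingredients as the paper's---linearity of $\Fut$, convexity of $\mnorm{\,\cdot\,}$, strict convexity of $\lnorm{\,\cdot\,}$ coming from positive-definiteness of $Q$, and the sign $M<0$. The only cosmetic difference is that the paper first rescales $v$ and $w$ so that $\Fut(v)=\Fut(w)$ (which is harmless since $\mu_i$ is scale-invariant and the cone spanned by $v,w$ is unchanged), and then compares norms directly on that level set, whereas you carry the bound $\Fut\le M\|\cdot\|_*$ through without rescaling; the two arguments are equivalent.
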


\begin{proof}
After scaling $v$ and $w$ by $\R_{>0}$ we may assume $\Fut(v)=\Fut(w)$ and equals $\Fut( t v+(1-t) w)$ by linearity. 
Next, note that  $\mnorm{\,\cdot \,}$  is convex and  $\lnorm{\, \cdot \,}^2$ strictly convex (since it a quadratic form and positive definite by Lemma \ref{l:positive}).
Therefore, for the two norms satisfy
\[ \lVert tv+(1-t)w \rVert \leq \max\{ \lVert v \rVert , \lVert w \rVert \}\] 
and the inequality is strict for the $L^2$ norm. This implies the desired inequalities.
\end{proof}

For a  cone $\sigma\subset N_{\R}$, we set  $\Delta(\sigma):= (\sigma \setminus 0)/ \R_{>0} 
\subset \Delta_\R$, and refer to $\Delta(\sigma)$ as the image  of $\sigma$ in $\Delta$. We proceed to describe the geometry of minimizers of $\bmu$ restricted to such subsets. 

\begin{prop}\label{p:minimizeroncone}
Assume $\T \to \Aut(X,D)$ has finite kernel. 
If $\sigma \subseteq N_\R$ is a rational polyhedral cone with $ \sigma  \cap \{ \Fut <0\}\neq \emptyset$, then  the infimum
\begin{equation}\label{eq:infbmu}
 \inf_{v \in \Delta(\sigma) } \bmu(v).
\end{equation}
is achieved at a unique point in $ \Delta(\sigma)$ and the point is rational.
\end{prop}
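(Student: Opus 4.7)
The plan is to minimize $\bmu$ on $\Delta(\sigma)$ lexicographically: first locate the set $\Delta(\sigma_1) \subseteq \Delta(\sigma)$ where $\mu_1$ attains its minimum, then minimize $\mu_2$ on $\Delta(\sigma_1)$, and finally check that the resulting unique minimizer lies in a rational ray.

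For the first step, $\Delta(\sigma)$ is compact and $\mu_1$ is continuous on $N_\R \setminus 0$, so the minimum $m_1 := \min_{\Delta(\sigma)} \mu_1$ is attained, and $m_1 < 0$ by Lemma \ref{l:positive} and the hypothesis $\sigma \cap \{\Fut < 0\} \neq \emptyset$. Decomposing $\sigma$ into the finitely many rational polyhedral subcones on which $\mnorm{\cdot}$ is linear reduces the minimization on each piece to a rational linear program (minimize $\Fut$ on a bounded rational slice), so $m_1 \in \Q$. Now set
\[
\sigma_1 := \{v \in \sigma : \Fut(v) - m_1 \mnorm{v} \leq 0\}.
\]
Since $m_1 < 0$ and $\mnorm{\cdot}$ is rational convex piecewise linear, the function $\Fut - m_1 \mnorm{\cdot}$ is a maximum of finitely many rational linear functionals, so $\sigma_1$ is a rational polyhedral cone. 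On $\sigma_1 \setminus 0$ one has $\mu_1 \equiv m_1$ and $\Fut < 0$; in particular $\sigma_1$ is pointed and $\Delta(\sigma_1)$ is a nonempty compact subset of $\Delta(\sigma)$.

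For the second step, $m_2 := \min_{\Delta(\sigma_1)} \mu_2$ is attained by compactness. If $[v], [w] \in \Delta(\sigma_1)$ both achieve $m_2$, then Proposition \ref{p:quasiconvex} applied to representatives in $\sigma_1 \setminus 0$ (where $\Fut < 0$) yields $\mu_2(tv + (1-t)w) < m_2$ for $t \in (0,1)$; since $tv + (1-t)w \in \sigma_1$ by convexity, this contradicts minimality. Hence the minimizer $[v^*] \in \Delta(\sigma_1)$ is unique, and it is the unique lexicographic minimizer of $\bmu$ on $\Delta(\sigma)$. For rationality, let $F$ be the minimal face of $\sigma_1$ containing a representative $v^* \in \sigma_1 \setminus 0$ and set $W := \mathrm{span}(F)$, a rational linear subspace of $N_\R$. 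Then $v^*$ lies in the relative interior of $F$, hence is a local (and by homogeneity global) minimum of $\mu_2|_{W \setminus 0}$. The restriction of $\Fut$ to $W$ is a nonzero rational linear functional (because $\Fut(v^*) < 0$), and $\lnorm{\cdot}^2|_W = Q|_W$ is a rational positive definite quadratic form (using Lemma \ref{l:positive} and Lemma \ref{l:Qlim}), so a standard Cauchy--Schwarz argument for the associated inner product shows that the minimum of $\mu_2|_{W \setminus 0}$ is attained exactly along a rational ray. Thus $[v^*]$ is rational.

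The principal technical point is establishing that $\sigma_1$ is a rational polyhedral cone. This relies on the sign $m_1 < 0$, which ensures that $\Fut - m_1 \mnorm{\cdot}$ is a rational convex piecewise linear function and hence a maximum of finitely many rational linear functionals; its zero sublevel set is then a rational polyhedral cone. With $\sigma_1$ in hand, Steps 2 and 3 reduce to strict quasi-convexity of $\mu_2$ from Proposition \ref{p:quasiconvex} and a Cauchy--Schwarz computation on the positive definite rational quadratic form $Q|_W$, respectively.
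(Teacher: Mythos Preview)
Your proof is correct and follows essentially the same two-step lexicographic strategy as the paper: first show the $\mu_1$-minimizing locus $\Delta(\sigma_1)$ is the image of a nonempty rational polyhedral cone (via the convex piecewise-linear function $\Fut - m_1\mnorm{\cdot}$, exactly as in the paper's Lemma \ref{l:minimizercone}), then minimize $\mu_2$ there using strict quasi-convexity. The only notable difference is in the rationality argument for the $\mu_2$-minimizer: the paper simply cites a Lagrange multiplier argument from \cite{HL14}*{Lemma 4.12}, whereas you give a self-contained argument by passing to the minimal face $F$ of $\sigma_1$ containing $v^*$, observing that $v^*$ is then an interior (hence unconstrained) critical point of $\mu_2|_{W\setminus 0}$ for the rational subspace $W=\mathrm{span}(F)$, and identifying it via Cauchy--Schwarz for the rational inner product $Q|_W$. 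This is a clean and more explicit alternative that avoids the external reference; both approaches ultimately exploit the same structure (rational linear numerator over a rational positive-definite quadratic form).
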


Before proving the proposition, we describe the geometry of minimizers of $\mu_1$ and $\mu_2$ separately. Note that the rationality of $\Delta_2$ is known; see e.g. \cite{FM95}.

\begin{lem}\label{l:minimizercone}
Keep the assumptions of Proposition \ref{p:minimizeroncone} and 
set 
\[
\Delta_i : = \big\{ v\in \Delta_{\sigma} \, \vert \, 
\mu_i(v) = \inf_{v \in \Delta(\sigma)} \mu_i(v) \big\}
\quad \text{ for } i=1,2.\] 
Then $\Delta_1$ is the image of a nonempty rational polyhedral cone and  $\Delta_2$ is a  rational point.
\end{lem}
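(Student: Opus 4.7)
The plan is to exploit the structural description of $\Fut$, $\mnorm$, and $\lnorm$ from Section \ref{S:stability_function}: $\Fut$ is rational linear, $\mnorm$ is rational piecewise linear and convex, and, by Lemma \ref{l:positive}, $\lnorm^2$ is a rational positive definite quadratic form. Both $\mu_1$ and $\mu_2$ are homogeneous of degree zero and continuous on $N_\R \setminus 0$ (using that $\mnorm$ and $\lnorm$ are strictly positive there), so they descend to continuous functions on the compact polyhedral quotient $\Delta(\sigma)$; hence both infima $c_i := \inf_{\Delta(\sigma)} \mu_i$ are attained, and strictly negative since $\sigma \cap \{\Fut < 0\} \neq \emptyset$.

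For $\Delta_1$, I first verify $c_1 \in \Q$: choosing a rational polyhedral subdivision of $\sigma$ on which $\mnorm$ is linear, on each subcone $\tau$ the restriction $\mu_1|_\tau$ is a ratio of rational linear functionals, so by linear-fractional optimization $\mu_1|_\tau$ attains its minimum on a rational extreme ray of $\tau$ with rational value; taking the min over the finitely many $\tau$ yields $c_1 \in \Q$. Next, I consider
\[
f(v) := \Fut(v) - c_1\, \mnorm(v).
\]
Since $-c_1 > 0$, $f$ is a sum of a rational linear function and a positive multiple of a convex rational piecewise linear function, hence is itself convex and rational piecewise linear, and $f \geq 0$ by the definition of $c_1$. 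Writing $f = \max_i \ell_i$ as a maximum of finitely many rational linear functionals, the zero locus is the rational polyhedral cone $\{f = 0\} = \bigcap_i \{\ell_i \leq 0\}$. Intersecting with $\sigma$ gives a rational polyhedral cone whose image in $\Delta(\sigma)$ is $\Delta_1$, which is nonempty because $c_1$ is attained.

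For $\Delta_2$, uniqueness of the minimizer $[v^*]$ is immediate from the strict-quasiconvexity clause of Proposition \ref{p:quasiconvex}: two distinct minimizers in $\Delta(\sigma)$ would admit an interior convex combination (which lies in $\sigma$ by convexity) with strictly smaller $\mu_2$-value. For rationality, I reduce to a convex optimization problem on the rational polyhedron $P := \{v \in \sigma : \Fut(v) = -1\}$: by degree-zero homogeneity, minimizing $\mu_2$ on $\sigma \cap \{\Fut < 0\}$ is equivalent to minimizing the rational positive definite quadratic form $\lnorm^2$ on $P$. Strict convexity singles out the minimizer $v^*$, which lies in the relative interior of a unique face $F \subset P$ and is characterized by the first-order stationarity of $\lnorm^2$ along $\operatorname{aff}(F)$ together with $v^* \in \operatorname{aff}(F)$; these form a rational linear system with a unique solution, so $v^* \in N_\Q$ and its image in $\Delta(\sigma)$ is the unique rational point of $\Delta_2$. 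The main subtlety is the identification of $\{f = 0\}$ as a rational polyhedral cone in the $\Delta_1$ step, which hinges on the rationality of $c_1$ and on the max-of-linear representation of rational piecewise linear convex functions.
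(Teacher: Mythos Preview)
Your argument is correct and follows essentially the same route as the paper's proof: for $\Delta_1$ you form the auxiliary function $f(v)=\Fut(v)-c_1\mnorm{v}$ (the paper calls it $g$) and use that it is nonnegative, convex, and rational piecewise linear to conclude its zero set in $\sigma$ is a rational polyhedral cone; for $\Delta_2$ you invoke strict quasi-convexity (Proposition~\ref{p:quasiconvex}) for uniqueness and a first-order stationarity argument on a rational affine slice for rationality. The paper is terser---it asserts rationality of $c_1$ directly from the piecewise-linear structure and cites a Lagrange multiplier argument from \cite{HL14} for the rationality of the $\mu_2$-minimizer---whereas you spell out the linear-fractional optimization on subcones and the KKT-type system on $\operatorname{aff}(F)$ explicitly, but the underlying ideas coincide.
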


\begin{proof}
Since $\Fut$ is rational linear and $\mnorm{\, \cdot \, }$ is piecewise rational linear, 
the value  $ M_1 := \inf \{ \mu_1(v) \, \vert \, 
 v\in \Delta(\sigma) \}
$ is rational. Additionally, $M_1<0$, since $\sigma \cap \{ \Fut<0 \}\neq \emptyset$.

 Now, note that the function $g:N_\R \to \R$ defined by
\[
g(v):= \Fut(v) - M_1 \mnorm{v}
.\]
 is non-negative on $\sigma$ and $\sigma_1: =\{ v\in \sigma \, \vert \, g(v)=0\}$ has image $\Delta_1$ in $\Delta_\R$.  Since $g$ is rational piecewise linear and convex, it follows that $\sigma_1$ is a rational polyhedral cone, which completes the  $i=1$ case.
 
Next, note that $\Delta_2$ is nonempty, since $\mu_2$ is a continuous function and $\Delta(\sigma)$ is compact. 
Furthermore, $\Delta_2$ must be a point, by Lemma \ref{p:quasiconvex}. The rationality of the point  follows from the fact that $\lnorm{\,\, }^2$ is a  rational quadratic form  and a Lagrange multiplier argument (see the proof of \cite[Lemma 4.12]{HL14}).
\end{proof}

\begin{proof}[Proof of Proposition \ref{p:minimizeroncone}]
By Lemma \ref{l:minimizercone},   $  \inf \{ \mu_1(v) \, \vert \, v\in \Delta(\sigma) \}$ is achieved on a set $\Delta_1 \subset \Delta_\R$, which is the image of a   nonempty rational polyhedral cone. 
Since $\R^2$ is endowed with the lexicographic order,  $v\in \Delta(\sigma)$ achieves
$\inf\{ \bmu(v)\, \vert \,v\in \Delta(\sigma) \}$ if and only if $v\in \Delta_1$ and $v$ achieves $\inf  \{  \mu_2(v) \,\vert \, v \in \Delta_1 \}$.
 Applying Lemma \ref{l:minimizercone} again gives that  \eqref{eq:infbmu} is achieved at a unique point  and the point is rational.
\end{proof}

\section{Existence of minimizers and constructibility results}\label{s:existance}
In this section we prove Theorem \ref{t:HNfilts}.1 on the existence of test configurations computing $M^{\bmu}$. In the process, we also prove a constructibility result for $M^{\bmu}$.

\subsection{Parameter space}\label{ss:paramtest}
Fix $c \in \Q_{>0}$ and a Hilbert function $h:c_{\rm den}\Z\to \Z$, where $c_{\rm den}$ denotes the denominator of $c$. 
For $0<\epsilon \leq 1$, let 
${ \cM^{\delta \geq\epsilon }_{h,c} \subseteq \cM^{\rm Fano}_{h,c}}$ denote the subfunctor
defined by 
\[
 \cM^{\delta \geq \epsilon }_{h,c}(T) = \{ [(X,D:= c\Delta) \to T] \in \  \cM^{\rm Fano}_{h,c}(T) \, \vert \, \delta(X_{\overline{t}},D_{\overline{t}}) \geq \epsilon \, \text{ for all }\, t\in T \}.\]
When $\epsilon=1$, this is precisely $ \cM^{\rm Kss}_{h,c}$. 
In order to parameterize  test configurations of log Fano pairs, we will explicitly describe $ \cM^{\delta \geq \epsilon }_{h,c}$ as a quotient stack. 

 To begin, we note the following boundedness result,
 which is is known (see \cite{Jia17, Che18,LLX18,XZ20}). 
 For the reader's convenience, we include a proof here.
\begin{thm}\label{t:bounded}
The moduli functor  $ \cM^{\delta\geq\epsilon }_{h,c}$ is bounded.
\end{thm}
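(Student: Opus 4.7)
The plan is to combine Birkar's solution of the BAB conjecture with a uniform singularity bound extracted from $\delta \geq \epsilon$, and then control the boundary $\Delta$ via the Hilbert function $h$.

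First I would fix the discrete invariants. Since $h$ determines both $n=\dim X_t$ and $V=(-K_{X_t}-D_t)^n$ (from its degree and leading coefficient), we may treat $n$, $V$, $c$, and $\epsilon$ as fixed and reduce the problem to showing that the $k$-points of $\cM^{\delta\geq\epsilon}_{h,c}$ form a bounded set of log Fano pairs. Next I would convert $\delta\geq\epsilon$ into a uniform lower bound on singularities. From the inequality $S_{X,D}(E)\leq\tfrac{n}{n+1}T_{X,D}(E)$, valid for every divisor $E$ over $X$, the bound $\delta(X,D)\geq\epsilon$ yields $\alpha(X,D)\geq\epsilon/(n+1)$. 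Coupling this $\alpha$-invariant lower bound with uniform effective birationality for $-m(K_X+D)$ on log Fano pairs of fixed dimension and volume (as in the works of Jiang, Hacon--McKernan--Xu, and Birkar), one extracts an $\epsilon'=\epsilon'(n,V,c,\epsilon)>0$ such that every $(X,D)$ under consideration is $\epsilon'$-klt.

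With this singularity bound in hand, the log version of Birkar's BAB theorem provides that the family of $\epsilon'$-klt log Fano pairs $(X,D)$ with $\dim X=n$ and $(-K_X-D)^n=V$ is bounded. In particular, the underlying varieties $X$ fit into a bounded family $\mathcal{X}\to S$. Since $h$ constrains the intersection number $\deg_{-K_X-D}(\Delta)$, the boundary divisors $\Delta$ parametrize a bounded family of K-flat Mumford divisors on the fibers of $\mathcal{X}\to S$ in the sense of \cite{Kol19}, and we conclude boundedness of $\cM^{\delta\geq\epsilon}_{h,c}$.

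The main obstacle is the singularity transfer step: making the passage from the $\alpha$-invariant bound to uniform $\epsilon'$-klt-ness precise across the entire family requires the effective birationality and uniform non-vanishing inputs from the works cited in the statement. Once these ingredients are available, the remaining steps are standard consequences of Birkar's theorem and Koll\'ar's theory of K-flat families of divisors.
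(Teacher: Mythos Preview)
Your overall strategy (pass to an $\alpha$-invariant bound and then invoke the boundedness results of \cite{Jia17}) is one of the valid routes the paper cites, but there are two issues with your sketch, and the paper's own proof takes a different and much shorter path.

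First, a minor slip: the inequality $S_{X,D}(E)\leq\tfrac{n}{n+1}T_{X,D}(E)$ gives $\delta\geq\tfrac{n+1}{n}\alpha$, which bounds $\alpha$ from \emph{above}. To deduce $\alpha(X,D)\geq\epsilon/(n+1)$ from $\delta(X,D)\geq\epsilon$ you need the opposite inequality $S_{X,D}(E)\geq\tfrac{1}{n+1}T_{X,D}(E)$ (equivalently $\delta\leq(n+1)\alpha$), which does hold \cite{BJ20}. So your conclusion survives, but the justification as written is backwards.

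Second, the chain ``$\alpha$-bound $\Rightarrow$ $\epsilon'$-klt $\Rightarrow$ BAB $\Rightarrow$ bounded'' has a genuine gap at the first arrow. Extracting a uniform $\epsilon'$-klt bound directly from $\alpha(X,D)\geq\epsilon/(n+1)$ would require a uniform lower bound on $T_{X,D}(E)$ for divisors $E$ of small log discrepancy, and this is not available a priori. Jiang's actual argument in \cite{Jia17} does not go through $\epsilon'$-klt: the $\alpha$-bound combined with the volume bound yields effective birationality of $|-m(K_X+D)|$ for a uniform $m$, and this produces boundedness directly via log birational boundedness. A uniform $\epsilon'$-lc bound is then a \emph{consequence} of boundedness (lower semicontinuity of log discrepancies in a bounded family), not a route to it; the appeal to BAB is superfluous once effective birationality is in hand.

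The paper's own proof avoids all of this with a two-line argument: \cite{XZ20}*{Theorem 1.5} directly furnishes a uniform $N=N(h,\epsilon)$ such that $-Nc_{\rm den}(K_X+D)$ is an ample Cartier divisor whenever $\delta(X,D)\geq\epsilon$, and then \cite{HMX14}*{Corollary 1.8} gives boundedness of the pairs $(X,\Delta)$. No $\alpha$-invariant, no BAB.
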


\begin{proof}
By \cite{XZ20}*{Theorem 1.5}, there exists $N := N(h,\epsilon)\in \mathbb N$ such that for any $[(X,D:= c\Delta)] \in \cM_{h,c}^{\rm Fano}(k)$ with $\delta(X,D)\geq  \epsilon$, $N\cdot c_{\rm den}(-K_X-D)$ is an ample Cartier divisor. Then by \cite{HMX14}*{Corollary 1.8}, we know that the set of such pairs $(X,\Delta)$ is bounded. 
\end{proof}

As a consequence of Theorem \ref{t:bounded}, there exists an integer $r:= r(h,c,\epsilon )$ 
such that if $[(X,D:=c\Delta)] \in  |\cM_{h,c}^{\delta\geq\epsilon}|$,
then $L:=-r(K_X+ D)$ is a very ample Cartier divisor.
In addition, the set of degrees  $d:= \Delta\cdot L^{\dim X-1}$, where $[(X,D:=c\Delta)] \in | \cM_{h,c}^{\delta\geq \epsilon}|$ is finite.

To construct $\cM_{h,c}^{\delta\geq \epsilon}$ as a quotient stack, 
observe that if 
$[f:(X,D:=c\Delta) \to T ] \in \cM_{h,c}^{\delta\geq \epsilon}(T)$
and $ L:=-r(K_{X/T}+D)$, then ${H^i(X_t, \cO_{X_t}(L_t))=0}$ for all $i>0$ and $t\in T$ by Kawamata-Viehweg vanishing. 
Therefore, $f_*\cO_{X}(L)$ is a vector bundle of rank $h(r)$ and there is an embedding 
$X \hookrightarrow \PP( f_*\cO_{X}(L))$. 

To parametrize such objects, let $\bH:= \Hilb_{h(r \, \cdot \,)}(\PP^m)$ denote the Hilbert scheme parametrizing subschemes $X\subset \PP^{m}$ with Hilbert polynomial $h(r \, \cdot \, )$, where $m:= h(r)-1$. 
Write  $\bH^\circ \subseteq \bH$ for the open locus  parametrizing normal varieties so that $H^i(X,\cO_X(1))=0$ for all $i>0$ and $X\subset \PP^m$ is linearly normal.  Write $X_{\bH^\circ } \subset \PP^m_{\bH^{\circ}}$ for the corresponding universal family.
By \cite[Theorem 98]{Kol19}, there is a separated finite type $\bH^\circ$-scheme $\bP$ parametrizing  K-flat relative Mumford divisors on $X_{\bH^\circ }/\bH^\circ $ of all possible degrees $d$ as above. Let  $\Delta_{\bP}$ denote the corresponding universal K-flat relative Mumford divisor on the pullback $X_{\bP} \subset \PP^{m}_{\bP}$.

There exists a locally closed subscheme $Z\hookrightarrow \bP$ such that a morphism $S\to \bP$ factors through $Z$ if and only if
\begin{itemize}
    \item[(a)] $[ f_{S}: (X_{S} , D_{S}= c\Delta_S)\to S] \in 
\cM_{h,c}^{\delta \geq \epsilon }(S)$ and
\item[(b)] $\cO_{X}(-r(K_{X/S}+D_S)) \otimes f_S^*\mathcal{N}\simeq \cO_{X}(1)$ for some line bundle $\mathcal{N}$ on $S$.
\end{itemize}
Indeed, this follows from  \cite[Proof of Theorem 2.21]{XZ19}, with the boundedness given by Theorem \ref{t:bounded} and the openness by Theorem 1.1 of  \cite{BLX19} (rather than Corollary 1.2 of \emph{loc. cit} in \cite{XZ19}). We note that \cite[Section 6]{BLZ19} has a similar construction of $Z$ which works over (semi-)normal base schemes. Here, similar to \cite{XZ19}, we use K-flatness introduced in \cite{Kol19} to work over arbitrary base schemes.
Now, observe that the $S$-valued points of  $Z$  parameterize pairs
\[
\left( [ f: (X , D= c\Delta)\to S] \in \cM_{h,c}^{\delta \geq \epsilon }(S)\,\,;\,\, \phi \in {\rm Isom}_S( \PP( f_* \cO_X(L)) \simeq \PP_S^m) \right)\]
and that the $\PGL:=\PGL_{m+1}$-action on $\PP^{m}$
induces an action on $\bP$ that restricts to $Z$. 
 It follows from the construction that $\cM_{h,c}^{\delta\geq \epsilon } \simeq [ Z/ \PGL]$.
  Furthermore, by the construction of $\bH$ and $\bP$ (see \cite[Setion 6]{Kol19} for the latter),
 there exists an equivariant embedding $\bP \hookrightarrow \PP(W)$, where $W$ is a vector space with a $\PGL$-action.

\subsection{1-parameter subgroups of $\PGL$}\label{ss:1psPGL}
For a 1-PS $\lambda: \G_m \to \PGL$ and a closed point $z = [(X,D)\hookrightarrow \PP^{m}]\in Z$,
consider the $\G_m$-equivariant map 
\[
\A^1 \setminus 0\to Z\hookrightarrow  \PP(W)
\quad \text{ defined by }  \quad t \cdot z\mapsto \lambda(t) \cdot z.\]
By the valuative criterion for properness, there is a unique extension to a $\G_m$-equivariant morphism $\A^1 \to \PP(W)$.
In particular  $z_0: = \lim_{t\to 0} \lambda(t)\cdot z$ exists and  is fixed by $\la$.  

 If $z_0\in Z$,
 then $z_0$ corresponds to a  log Fano pair 
 $[(X_0,D_0)\hookrightarrow \PP^m]$ with a $\G_m$-action
 and the pullback of $(X_{\bP},c\Delta_{\bP})$ by $\A^1 \to Z$ is naturally a special test configuration of $(X,D)$ that we denote by $(\cX_{\la},\cD_{\la})$. 
 In this case, we set 
 \[
 \bmu(z,\lambda ) := \bmu (X_0, D_0; \la) = \bmu (\cX_{\la} , \cD_\la) \in \R^2 
 .\]
If $z_0\notin Z$, we set $\bmu(z,\lambda):=(+\infty, +\infty)$.

 \begin{prop}\label{p:M(X,D)=infl}
For $z=[(X,D) \hookrightarrow \PP^{m} ] \in Z$,
\[
M^{\bmu}(X,D)\leq 
\inf_{ \la \in \Hom( \G_m , \PGL)}  \bmu(z,\la)
.\]
Furthermore, if $(X,D)$ is K-unstable and satisfies  Conjecture \ref{c:optdest}, then the equality holds.
\end{prop}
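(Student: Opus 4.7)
The proposition has two parts, and the inequality direction is essentially definitional while the equality requires producing a 1-PS that realizes the optimal test configuration. My plan is as follows.

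For the inequality $M^{\bmu}(X,D) \leq \inf_\lambda \bmu(z,\lambda)$: fix $\lambda \in \Hom(\G_m,\PGL)$. If the limit $z_0 = \lim_{t\to 0}\lambda(t)\cdot z$ does not lie in $Z$, then by our convention $\bmu(z,\lambda) = (+\infty,+\infty)$ and there is nothing to prove. Otherwise, the induced $\G_m$-equivariant map $\A^1 \to Z$ pulls back the universal family over $Z$ to a special test configuration $(\cX_\lambda,\cD_\lambda)$ of $(X,D)$ satisfying $\bmu(z,\lambda) = \bmu(\cX_\lambda,\cD_\lambda)$. By the definition \eqref{eq:M=infmu1} of $M^{\bmu}$ as an infimum over all special test configurations, we obtain the required bound.

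For the reverse inequality under Conjecture \ref{c:optdest}: apply Theorem \ref{t:HNfilts}.1 to produce a special test configuration $(\cX,\cD)$ of $(X,D)$ achieving $\bmu(\cX,\cD) = M^{\bmu}(X,D)$. The task is then to realize $(\cX,\cD)$ as $(\cX_\lambda,\cD_\lambda)$ for some 1-PS $\lambda$ of $\PGL$. The plan is to set $\cL := -r(K_{\cX/\A^1}+\cD)$ and consider the $\G_m$-equivariant vector bundle $\pi_*\cO_\cX(\cL)$ on $\A^1$; by Kawamata--Viehweg vanishing and base change (as in Lemma \ref{l:futnormfiberwise}) it has rank $m+1$. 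Any $\G_m$-equivariant vector bundle on $\A^1$ is equivariantly trivial, and a choice of basis compatible with the weight decomposition of the fiber at $0$ yields a $\G_m$-equivariant closed embedding $\cX \hookrightarrow \PP^m \times \A^1$ over $\A^1$, in which $\G_m$ acts on $\PP^m$ through a 1-PS $\lambda'\colon \G_m \to \PGL$. Restricting to $t=1$ produces an embedding of $(X,D)$ into $\PP^m$ whose class in $Z$ agrees with $z$ up to an element $g\in\PGL$; replacing $\lambda'$ by its conjugate $\lambda := g\lambda' g^{-1}$ yields a 1-PS whose associated map $\A^1 \to \PP(W)$ sends $1$ to $z$ and whose pullback of the universal family is isomorphic to $(\cX,\cD)$. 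One verifies that the orbit $\A^1 \to \PP(W)$ factors through $Z$ by choosing $\epsilon \leq \delta(X,D)$ in the setup of \S\ref{ss:paramtest}, using Theorem \ref{t:BLZinf}.2 to see that $\delta(\cX_0,\cD_0) = \delta(X,D) \geq \epsilon$ as well. Consequently $\bmu(z,\lambda) = \bmu(\cX,\cD) = M^{\bmu}(X,D)$, completing the proof.

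The main technical point is the equivariant trivialization step and the comparison of the resulting embedding at $t=1$ with the fixed $z$. This is standard, but one must track the difference between 1-PS of $\GL_{m+1}$ and of $\PGL$ and ensure that conjugation does not spoil the identification of the pulled-back family with $(\cX,\cD)$ — which it does not, since conjugation by $g$ at $t=1$ corresponds to changing the isomorphism $(\cX_1,\cD_1) \simeq (X,D)$, leaving the underlying test configuration unchanged.
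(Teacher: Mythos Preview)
There is a genuine circularity in your argument. You invoke Theorem \ref{t:HNfilts}.1 to produce a test configuration achieving $M^{\bmu}(X,D)$, but in the paper's logical structure Theorem \ref{t:HNfilts}.1 is proved via Proposition \ref{c:tfae}, which relies on Proposition \ref{p:descending}.2, whose proof in turn uses the equality in Proposition \ref{p:M(X,D)=infl} --- the very statement you are trying to establish. So at this point in the argument you cannot assume that a $\bmu$-minimizing test configuration exists.

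The paper sidesteps this by using only what is already available, namely Theorem \ref{t:BLZinf}. Under Conjecture \ref{c:optdest}, Theorem \ref{t:BLZinf}.1 guarantees that $\inf_{(\cX,\cD)}\mu_1(\cX,\cD)$ is achieved, so by the lexicographic order
\[
M^{\bmu}(X,D)=\inf_{(\cX,\cD)}\bmu(\cX,\cD)=\inf_{\mu_1(\cX,\cD)=M_1^{\bmu}(X,D)}\bmu(\cX,\cD).
\]
One then shows that \emph{every} test configuration with $\mu_1(\cX,\cD)=M_1^{\bmu}(X,D)$ arises from a 1-PS: by Theorem \ref{t:BLZinf}.2 one has $\delta(\cX_0,\cD_0)=\delta(X,D)\geq\epsilon$, so $-r(K_{\cX_0}+\cD_0)$ is very ample by the choice of $r$ in \S\ref{ss:paramtest}, hence $-r(K_{\cX/\A^1}+\cD)$ is relatively very ample, and then (via \cite{BHJ17}*{\S3.2}, which is essentially your equivariant-trivialization argument) $(\cX,\cD)\simeq(\cX_\lambda,\cD_\lambda)$ for some $\lambda$. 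Taking the infimum over this class of test configurations gives the reverse inequality. Your embedding construction is fine; it just needs to be applied to this broader family of $\mu_1$-minimizers rather than to a single $\bmu$-minimizer whose existence has not yet been established.
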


\begin{proof}
The  first statement is clear, since  $\bmu(z,\lambda) = \bmu(\cX_\lambda,\cD_\lambda)$
for any 1-PS $\lambda$ such that $\lim_{t\to 0} \lambda(t) \cdot z\in Z$.
If $(X,D)$ is K-unstable and satisfies Conjecture \ref{c:optdest},
then Theorem \ref{t:BLZinf} implies $\inf_{(\cX,\cD)}\mu_1(\cX,\cD)$ is achieved.
Therefore,
\[
\inf_{(\cX,\cD)} \bmu (\cX,\cD) =  \inf_{ \mu_1 (\cX,\cD) = M_1^{\bmu}(X,D)} \bmu (\cX,\cD) .
\]
Now, fix a test configuration  $(\cX,\cD)$ with $\mu_1(\cX,\cD) = M_1^{\bmu}(X,D)$. By Theorem \ref{t:BLZinf}.2,
\[
\delta(\cX_0,\cD_0) = \delta(\cX,\cD)\geq \epsilon 
.\]
Therefore, $\cL_0: = -r(K_{\cX_0} + \cD_0)$ is a very ample  Cartier divisor and so is ${-r(K_{\cX/\A^1}+ \cD)}$.
The latter implies $(\cX,\cD) \simeq (\cX_{\lambda},\cD_{\lambda})$ for some 1-PS $\lambda:\G_m \to \PGL_n$  (see e.g. \cite[Section 3.2]{BHJ17}) and, hence, $\bmu(\cX,\cD) = \bmu(z,\lambda)$. This shows the inequality ``$\geq$'' also  holds.
\end{proof}

Using a standard argument from Geometric Invariant Theory, we will prove properties of the infimum appearing in Proposition \ref{p:M(X,D)=infl}.

\subsection{1-parameter subgroups of a maximal torus}\label{ss:1psT}
Fix a maximal torus $\T \subset \PGL$. Set $N:= \Hom(\G_m,\T)$ and
 $M: = \Hom(\T,\G_m)$.
Observe that
 \begin{equation}\label{e:infmulav}
 \inf_{ \la\in \Hom(\G_m, \PGL)} \bmu(z,\la) = 
 \inf_{g\in \PGL} \inf_{v \in N} \bmu(gz,v).
 \end{equation}
Indeed, this is a consequence of the following facts:
\begin{itemize}
    \item[(i)] $\bmu(z,\lambda ) = \bmu(gz, g\lambda g^{-1})$
for any $g\in \PGL$ and  $\lambda \in \Hom(\G_m,\PGL)$;
    \item[(ii)] for any $\lambda\in \Hom(\G_m, \PGL)$, there exists $g\in \PGL$ such that $g \lambda g^{-1}\in N$.
\end{itemize}
By \eqref{e:infmulav}, to study $\inf_{\la} \bmu(z,\la)$ it suffices to study $\inf_{v} \bmu(z,v)$ as we vary $z\in Z$.

\begin{prop}\label{p:mubehaviour}
Keep the above notation.
\begin{itemize}
    \item[(1)]  There exists a decomposition of $Z=\bigsqcup_{i=1}^s Z_i$ into locally closed subsets such that
    \[
     Z_i  \times N  \ni (z,v) \mapsto \bmu(z,v)
    \quad \text{ is independent of } \quad z\in Z_i.
    \]
    \item[(2)] Fix a point $z\in Z$. 
If the function $\bmu: N  \to \R^2$ defined by $v\mapsto \bmu(z,v)$ takes a value $<{\bf 0}$, then it achieves a minimum. 
\end{itemize}
\end{prop}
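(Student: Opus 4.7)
The plan is to reduce both parts of the proposition to the torus-equivariant analysis of Section \ref{s:logFanotorus}, via a finite fan decomposition $\Sigma$ of $N_\R$ that records how the $v$-initial data of $z$ varies with $v$.

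For part (1), write $W_m := H^0(\PP^m, \cO(m))$ with its finite $\T$-weight set $\Omega_m \subset M$, and recall that the $v$-weight multiplicities on $H^0(X_0^v, \cO(m))$ are computed from the graded pieces of the $v$-initial ideal $\mathrm{in}_v(I_m^z) \subset W_m$. Since $\mathrm{in}_v$ depends on $v$ only through the order that $\langle \cdot, v\rangle$ induces on $\Omega_m$, we obtain a finite rational polyhedral fan on $N_\R$ on which $\mathrm{in}_v(I_m^z)$ is constant for $v \in \sigma^\circ$. By boundedness (Theorem \ref{t:bounded}), finitely many degrees $m \leq m_0$ suffice to determine $\bmu$, so the common refinement over these degrees (augmented by the analogous $\T$-weight data for the K-flat divisor $\Delta$ in the sense of \cite{Kol19}) produces a single fan $\Sigma$ on $N_\R$ that works uniformly. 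For each $\sigma \in \Sigma$, $m \leq m_0$, and $\lambda \in \Z$, the function $z \mapsto \dim (\mathrm{in}_v I_m^z)_\lambda$ (with $v \in \sigma^\circ$) is upper-semicontinuous and hence constructible on $Z$. A common refinement of this finite family of constructible stratifications furnishes the desired decomposition $Z = \bigsqcup Z_i$ on which $\bmu(z,v)$ is independent of $z \in Z_i$.

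For part (2), fix $z \in Z$ and refine $\Sigma$ so that $Y_\sigma := X_0^v$ is constant for $v \in \sigma^\circ$. Let $\Sigma' \subseteq \Sigma$ be the subfan of cones with $Y_\sigma \in Z$; this is closed under faces since, for $\rho \subset \tau$ with $Y_\tau \in Z$, $Y_\rho$ appears as the generic fiber of an equivariant flat family with log Fano central fiber $Y_\tau$, forcing $Y_\rho$ to be log Fano and to satisfy $\delta(Y_\rho) \geq \delta(Y_\tau) \geq \epsilon$. Off the support of $\Sigma'$ we have $\bmu(z,\cdot) = +\infty$, which cannot contribute to a strictly-negative infimum. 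The crucial observation is that on any cone $\sigma \in \Sigma'$ the identity $\bmu(z,v) = \bmu(Y_\sigma;v)$ extends from $v \in \sigma^\circ$ to the entire closed cone $\sigma$: for any face $\tau \subsetneq \sigma$, choosing any $w \in \sigma^\circ$ realizes $Y_\sigma$ as the flat limit of $Y_\tau$ under the 1-PS $w$, which commutes with every $v \in \tau$ since $\T$ is abelian, so Lemma \ref{l:futnormfiberwise} gives $\bmu(Y_\tau;v) = \bmu(Y_\sigma;v)$ for all $v \in \tau$.

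It then remains to apply Proposition \ref{p:minimizeroncone} to each pair $(Y_\sigma,\sigma)$ for maximal $\sigma \in \Sigma'$, after first quotienting $\T$ by $\ker(\T \to \Aut(Y_\sigma))$ to ensure the finite-kernel hypothesis (to which $\bmu$ is indifferent). Cones on which $\Fut(Y_\sigma;\cdot) \geq 0$ satisfy $\bmu(z,\cdot) \geq \mathbf{0}$ on $\sigma$ and do not contribute to a negative infimum; each remaining cone furnishes a unique rational minimizer $v^*_\sigma \in \sigma$ at which $\bmu(z,v^*_\sigma) = \bmu(Y_\sigma;v^*_\sigma)$ by the identity above. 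Finiteness of $\Sigma'$ implies that $\inf_{v \in N}\bmu(z,v)$ equals the minimum of finitely many real values, which is therefore attained. The main obstacle is expected to be in part (1): obtaining the uniform degree bound $m_0$ and coordinating the $\T$-weight data across both the ideal of $X$ and the K-flat data for $\Delta$ is a bookkeeping-heavy application of Theorem \ref{t:bounded}. The other conceptually essential ingredient is the flat-degeneration identity of part (2), which converts the per-cone infimum of $\bmu(Y_\sigma;\cdot)$ on $\sigma^\circ$ (which may not be attained) into an attained minimum on $\sigma$ via Proposition \ref{p:minimizeroncone}.
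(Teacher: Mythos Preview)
Your Part~(2) is essentially the paper's argument: a finite rational fan whose open cones index the possible limit points, the key identity $\bmu(z,v)=\bmu(Y_\sigma;v)$ extended from $\sigma^\circ$ to the closed cone $\sigma$ via a further degeneration and Lemma~\ref{l:futnormfiberwise}, followed by Proposition~\ref{p:minimizeroncone} on each cone. The paper phrases this using the polytope $Q={\rm conv.hull}(u_i\mid w_i\neq 0)$ in the ambient $\PP(W)$ and its normal fan, but the content is the same; your remark that $\Sigma'$ is closed under faces is exactly what the paper uses implicitly when it invokes Lemma~\ref{l:futnormfiberwise} along the degeneration $[w^G]\rightsquigarrow[w^F]$.

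Your Part~(1), however, takes a genuinely different route and has a gap. You stratify $Z$ by the dimensions $\dim(\mathrm{in}_v I_m^z)_\lambda$ for $m\le m_0$, together with analogous divisor data. But $\bmu(z,v)$ is \emph{defined} to be $(+\infty,+\infty)$ when $\lim_{t\to 0} v(t)\cdot z\notin Z$, and two points $z,z'$ sharing all of your weight-dimension data can have limits with one lying in $Z$ and the other not: equal graded dimensions do not force the initial ideals (or the limit points in $\bP$) to coincide, and membership in $Z$ depends on geometric conditions (log Fano, $\delta\ge\epsilon$, the polarization condition~(b) of \S\ref{ss:paramtest}) that are not read off from these numbers. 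So your strata need not make $\bmu(\cdot,v)$ constant. A secondary issue is that $\bmu$ is an asymptotic quantity in $m$; your appeal to boundedness to truncate at $m_0$ is plausible but would need an argument that the full weight function, not just the Hilbert function, is controlled.

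The paper sidesteps both problems by never computing weight multiplicities at all. It uses the equivariant embedding $Z\hookrightarrow\bP\hookrightarrow\PP(W)$ already supplied in \S\ref{ss:paramtest}, stratifies $Z$ by the nonvanishing-coordinate pattern $I\subset\{1,\dots,l\}$ in $\PP(W)$, and then refines each $Z_I$ into connected locally closed pieces $Z_{I,k}$ such that every coordinate projection $\varphi_{I,J}(Z_{I,k})$ lies either entirely in $Z$ or entirely in $\PP(W)\setminus Z$. On a stratum where the limit lands in $Z$, the image $\varphi_{I,J}(Z_{I,k})$ is a connected family of log Fano pairs carrying the fixed $\G_m$-action $v$, so Lemma~\ref{l:futnormfiberwise} gives constancy of $\bmu$ directly; on a stratum where it does not, $\bmu\equiv(+\infty,+\infty)$. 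This is both shorter and avoids the bookkeeping you flag as the main obstacle.
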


 Related results appear in the literature for when $\bmu$ is replaced by  the Futaki invariant.  For example, see  \cite[Lemma 2.10]{Oda13}.

Before proving the above result,
we recall an explicit  description  of the relevant 1-parameter degenerations \cite[pg. 51]{GIT}.
Since $\T$ acts linearly on $W$, we may choose a basis $\{e_1,\ldots, e_l\}$ for $W$
and cocharacters $u_1,\ldots,u_l \in M$ such that 
\[
{\bf t}\cdot e_i = u_i({\bf t} ) e_i \quad \quad \text{ for each } 1\leq i \leq l \text{  and } {\bf t }\in \T.\]
Hence, if we write a point  $[w]=[w_1:\cdots : w_l]\in \PP(W)$ using coordinates in this basis and fix $v\in N$, then
\[
 v(t) \cdot [w] = [t^{ \langle u_1, v \rangle } w_1 : \cdots : t^{ \langle u_l, v\rangle } w_l]
 \quad \text{ for } t\in \G_m
 .
\]
Therefore, if we set $I : = \{1 \leq  i \leq  l \, \vert \, w_i \neq 0 \}$,
then 
$
\lim_{t\to 0} v(t) \cdot [w] =  [w']$,
where 
\[
w'_{j} = \begin{cases}
 w_j & \text{ if } \langle u_j, v \rangle
\leq \langle u_i,v\rangle \text{   for all  } i \in I\\
0 & \text{otherwise}
\end{cases}
,\]
 and $v$ fixes $[w]$ if and only if $\langle u_i,v\rangle = \langle u_j,v\rangle$ for all $i,j\in I$.
 Based on this computation, 
for each  nonempty $I \subset \{1,\ldots, l \}$, we set
    \[U_I :=\{ [w] \in \PP(W)\, \vert \,   w_i \neq 0 \, \text{ iff }  i \in  I\}
     .\]
and, when $J \subset I$,  write 
    $\varphi_{I,J}:  U_{I} \to  U_J$ for the projection map. 

\begin{proof}[Proof of Proposition \ref{p:mubehaviour}.1]
    For each  nonempty subset $I \subset \{1,\ldots, l \}$, consider the locally closed subset  $Z_I := U_I \cap Z \subset Z$. 
    Next, write
$Z_I =\sqcup_{k} Z_{I,k}$ as the disjoint union of finitely many connected locally closed subschemes
such that, for each $J \subsetneq I$, 
 $\varphi_{I,J}(Z_{I,k})$  is either contained entirely in $Z$ or in $\PP(W)\setminus Z$.

To see the decomposition $Z=\sqcup_{I,k} Z_{I,k}$ satisfies the conclusion of the proposition, fix a component $Z_{I,k}$ and $v\in N$.
Set \[
J := \{ j \in I\, \vert \, \langle v,u_{j} \rangle \leq \langle v,u_i  \rangle \text{ for all } i \in I \}\subset I\]
and note that (i) if $z\in Z_{I,k}$, then $\lim_{t\to 0} v(t) \cdot z = \varphi_{I,J}(z)$ and (ii) $v$ fixes the points in $Z_J$.
If  $\varphi_{I,J}(Z_{I,k}) \subset Z$, then $\varphi_{I,J}(Z_{I,k})$
lies in a connected component of $Z_J$, since $Z_{I,k}$ is connected. 
In this case, $\bmu(z,v) = \bmu( \varphi_{I,J}(z), v)$ and the latter is independent of $z\in Z_{I,k}$ 
by Lemma \ref{l:futnormfiberwise}.
On the other hand, if $\varphi_{I,J}(Z_{I,k})\subset \PP(W)\setminus Z$,  then $\bmu(z,v) = (+\infty,+\infty)$ for all $z\in Z_{I,k}$. 
Therefore, the decomposition is of the desired form.
\end{proof}

Before proving Proposition  \ref{p:mubehaviour}.2, 
we  recall how  $\lim_{t \to 0} v(t)\cdot z$ changes as we vary $v\in N$.
Fix a point $[w]\in \PP(W)$ and consider the polytope
\[ Q : = {\rm conv.hull}(u_i \, \vert \, w_i \neq 0 ) \subseteq M_{\R}.\]
For a face $F\subset Q$, the normal cone to $F$ is given by 
\[
\sigma_F : = \{ v\in N_{\R} \, \vert \, \langle u,v\rangle \leq \langle u',v \rangle  \text{ for all } u\in F \text{ and } u' \in Q  \}
\]
and is a rational polyhedral cone.
Note that the cones $\sigma_F$ as $F$ varies through faces of $Q$ form a fan supported on $N_{\R}$. 
For a face $F\subset Q$, set 
\[
w^F_{j} = \begin{cases}
 w_j & \text{ if } u_j \in F\\
0 & \text{otherwise}
\end{cases}
.\]
Note that, if $v\in {\rm Int}( \sigma_F) \cap N$, then $\lim_{t\to 0} v(t) \cdot [w]= [w^F]$.
Additionally, if $v\in  { {\rm span}_{\R}(\sigma_F) \cap N}$, then $v$ fixes $[w^F]$.

\begin{proof}[Proof of Proposition \ref{p:mubehaviour}.2]
Let $[w]\in \PP(W)$ be a representation of $z$ in coordinates
and consider the polytope $Q\subset N_{\R}$ as above. 
Now, fix a face $F\subset Q$.
It suffices to show that if $\bmu$ takes a value $<{\bf 0}$ on ${\rm Int} (\sigma_{F})\cap N$, then $\inf \{ \bmu([w],v) \, \vert \,  v \in \sigma_F \cap N\}$ is a minimum. 

We claim that
\begin{equation}\label{eq:musigmaF}
\bmu([w],v) = \bmu([w^F],v) 
\quad \text{ for all }\quad   v\in \sigma_F \cap N.
\end{equation}
Indeed, if $v \in {\rm Int}(\sigma_F) \cap N$, then 
$\lim_{t \to 0} v(t) \cdot [w]= [w^F]$ and the formula holds.
On the other hand, if $v\in ( \sigma_F \setminus {\rm Int}(\sigma_F)) \cap N$, then $\lim_{t\to 0 }v(t) \cdot [w] = [w^G]$, where  $G$ is the face of $Q$  such that $v\in {\rm Int}(\sigma_G)$.
Using that any element in $N \cap {\rm Int}(\sigma_F)$ gives a degeneration $[w^G] \rightsquigarrow [w^F]$ and Lemma \ref{l:futnormfiberwise}, we see
\[
\bmu(z,[w]) = \bmu([w^G],v) = \bmu([w^F],v),\]
which shows \eqref{eq:musigmaF} holds.

Now, consider the  subspace $N_{\R}^F : = {\rm span}_{\R}(\sigma_F) \subset N_\R$ and the lattice $N^F : = N_{\R}^F \cap N$. 
Write $\T^{F}\subset \T$ for the subtorus satisfying $N_F= \Hom(\G_m,\T^F)$ and note that $\T^F$ fixes $[w^F]$. 
Applying Proposition \ref{p:minimizeroncone} to the log Fano pair corresponding to $[w^F]$ with the action by $\T^F$, we  see
$\inf \{ \bmu([w^F],v)\, \vert \,  v\in \sigma_F \cap N \}$ is a minimum, which completes the proof.
\end{proof}

\subsection{Existence of minimizers and constructibility results}

Using results from Sections \ref{ss:1psPGL} and \ref{ss:1psT},
we can deduce properties of $M^{\bmu}(X,D)$.

\begin{prop}\label{p:descending}
Assume Conjecture \ref{c:optdest} holds and set 
\[
\Gamma  := \left\{ M^{\bmu}(X,D) \, \vert \, [(X,D) \hookrightarrow \PP^m ] \in Z\right\} .
\]
Then the following hold:
\begin{enumerate}
    \item  The set $\Gamma$ is finite
    and $
Z_{ \geq {\bf m}}:=\{ [(X,D) \hookrightarrow \PP^m]  \in Z \, \vert \, M^{\bmu}(X,D) \geq {\bf m} \}$
is constructible  for each ${\bf m} \in \Gamma$.
    \item If $[(X,D)\hookrightarrow \PP^m] \in Z$ and $(X,D)$ is K-unstable, then the infimum $M^{\bmu}(X,D)$ is attained.
\end{enumerate}
\end{prop}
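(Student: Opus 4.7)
The plan is to reduce the statement to a constructibility analysis for the action of $\PGL$ on $Z$, combining Proposition~\ref{p:M(X,D)=infl}, the identity \eqref{e:infmulav}, and the two parts of Proposition~\ref{p:mubehaviour}. Fix a maximal torus $\T \subset \PGL$ with coweight lattice $N$, and define $F \colon Z \to (\R\cup\{\pm\infty\})^2$ by $F(z) := \inf_{v \in N} \bmu(z,v)$. Since Conjecture~\ref{c:optdest} is assumed, for any K-unstable $z = [(X,D)\hookrightarrow \PP^m] \in Z$ we have
\[
M^{\bmu}(X,D) \;=\; \inf_{\lambda \in \Hom(\G_m,\PGL)} \bmu(z,\lambda) \;=\; \inf_{g \in \PGL} F(gz),
\]
by Proposition~\ref{p:M(X,D)=infl} and \eqref{e:infmulav}, while $M^{\bmu}(X,D) = {\bf 0}$ on the K-semistable locus.

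The first key observation is that $F$ is a constructible function with finite image, being constant on each stratum $Z_i$ of the decomposition $Z = \bigsqcup_{i=1}^s Z_i$ produced by Proposition~\ref{p:mubehaviour}.1. Thus for every $\bfm$, the sublevel set $F^{-1}(\{y<\bfm\})$ is a finite union of the $Z_i$. This gives part~(1) directly: $\Gamma \subseteq \{{\bf 0}\} \cup F(Z)$ is finite; and for the constructibility of $Z_{\geq \bfm}$, I would write its complement inside the closed subset $Z^{\rm Kunst} := Z \setminus Z^{\rm Kss}$ (K-semistability being open by \cite{BLX19,Xu20}) as the $\PGL$-saturation
\[
\big(\PGL \cdot F^{-1}(\{y < \bfm\})\big) \cap Z^{\rm Kunst},
\]
which is constructible by Chevalley's theorem applied to the action map $\PGL \times Z \to Z$. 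The contribution from $Z^{\rm Kss}$ is all of $Z^{\rm Kss}$ or empty, according to whether $\bfm \leq {\bf 0}$ in the lexicographic order.

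For part~(2), the attainment of $M^{\bmu}(X,D)$ is a two-step finite minimization. Given a K-unstable $z$, the orbit $\PGL \cdot z$ meets only finitely many strata $Z_i$, so the outer infimum $\inf_{g}F(gz)$ is the minimum of a finite set of values in $(\R\cup\{\pm\infty\})^2$; pick $g^* \in \PGL$ attaining it. Since $M_{1}^{\bmu}(X,D) = \delta(X,D)-1 < 0$ by Theorem~\ref{t:BLZinf} and \eqref{e:M_1}, we have $F(g^*z) < {\bf 0}$, so Proposition~\ref{p:mubehaviour}.2 yields some $v^* \in N$ attaining the inner infimum $\inf_{v \in N}\bmu(g^*z,v)$. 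The conjugate 1-PS $\lambda^* := (g^*)^{-1} v^* g^* \in \Hom(\G_m,\PGL)$ then satisfies $\bmu(z,\lambda^*) = M^{\bmu}(X,D)$, and the associated special test configuration $(\cX_{\lambda^*},\cD_{\lambda^*})$ achieves the infimum in \eqref{eq:M=infmu1}.

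The conceptual substance is already absorbed into Propositions~\ref{p:M(X,D)=infl} and~\ref{p:mubehaviour}, so I do not anticipate a deep obstacle. The main bookkeeping is separating K-semistable from K-unstable loci in the constructibility statement, verifying that the image of the action map is constructible (standard Chevalley), and keeping straight the two nested infima so that attainment in the $g$-direction (from finiteness of the stratification) can be combined, in the correct order, with attainment in the $v$-direction (from Proposition~\ref{p:mubehaviour}.2).
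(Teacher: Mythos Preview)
Your proposal is correct and follows essentially the same approach as the paper: reduce to the torus via \eqref{e:infmulav}, use the stratification of Proposition~\ref{p:mubehaviour}.1 to see that $F(z)=\inf_v \bmu(z,v)$ takes finitely many values, apply Chevalley to the $\PGL$-saturation of sublevel sets, and combine the finite outer minimum with Proposition~\ref{p:mubehaviour}.2 for attainment. The only packaging difference is that the paper avoids your separate treatment of the K-semistable locus (and the appeal to openness of K-semistability) by using the inequality $M^{\bmu}(X,D)\leq \inf_\lambda \bmu(z,\lambda)$ from Proposition~\ref{p:M(X,D)=infl} uniformly, which directly yields $Z_{\geq \bfm} = Z \setminus \bigcup_{\bfm^i < \bfm} \PGL \cdot Z_i$ for all $z$.
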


The proof is similar to an argument in \cite[\S 6]{BLZ19} that proves a related constructibility result for $M_{1}^{\bmu}$.

\begin{proof}
By Proposition \ref{p:mubehaviour}.1, there exists a decomposition $Z = \bigsqcup_{i=1}^s Z_i $ such that each $Z_i$ is a locally closed subset and functions $\bmu^{i}: N \to \R^2 \cup \{ ( + \infty, +\infty)\}$ so that
$\bmu^{i}(v) = \bmu(z,v)$ for each $z\in Z_i$ and $v\in N$ . 
Set 
${\bf m}^i :  = \inf_{v \in N} \bmu^{i}$.
Furthermore, Proposition \ref{p:mubehaviour}.2 guarantees the existence of 
$v_i\in N$ so that ${\bf m}^i : = \bmu^i (v^i)$ when 
 ${\bf m}^i<{\bf 0}$.

Now, fix $z= [(X,D) \hookrightarrow \PP^m]\in Z$ and recall that
\begin{equation}\label{e:constructible1}
M^{\bmu}(X,D) = \inf_{\la \in \Hom(\G_m , \PGL) } \bmu(z,\la)  
\end{equation}
 if  $(X,D)$ is K-unstable 
and the inequality $\leq $ holds if $(X,D)$ is K-semistable by Proposition \ref{p:M(X,D)=infl}.
To compute the right hand side infimum, note that 
\[
 \inf_{\la \in \Hom(\G_m , \PGL)}\bmu( z, \la ) = \inf_{g \in \PGL} \inf_{v\in N} \bmu(g\cdot z,v) 
\]
by \eqref{e:infmulav}
and
$\displaystyle \inf_{v} \bmu(g\cdot z,v) = \inf_{v } \bmu^j(v) = {\bf m^j}$ when $g \cdot z\in  Z_j $. 
Therefore, 
\begin{equation}\label{e:constructible2}
\inf_{\la \in \Hom(\G_m , \PGL)}\bmu( z, \la ) = \min \{ {\bf m}^j \,\vert \, \PGL \cdot z \in Z_j \} 
.\end{equation}
Combining \eqref{e:constructible1} and \eqref{e:constructible2} gives
$\Gamma \subseteq \{ {\bf 0}\} \cup \{ {\bf m}^1 , \ldots , {\bf m}^s \} $ and, hence, is finite.
In addition, for each ${\bf m } \in \Gamma$,
\[
Z_{\geq {\bf m} } =  Z \setminus \bigcup_{ {\bf m}^i < {\bf m} } \PGL\cdot Z_i .
\]
Since each set  $\PGL \cdot Z_i$ is constructible by Chevalley's Theorem,  $Z_{\geq {\bf m}}$ is also constructible. 

To see (2) holds, fix $z=[(X,D) \hookrightarrow \PP^m]\in Z$ with $(X,D)$ K-unstable. By the previous discussion, we may choose  $i\in \{1,\ldots, s\}$ and $g\in \PGL$ so that $M^{\bmu}(X,D) = {\bf m^i}$ and ${g\cdot z \in Z_i}$.
Since
$
{\bf m}^i=
\bmu(g\cdot z, v_{i})$, the action of $v_i$ on $g\cdot [(X,D) \hookrightarrow \PP^m]$ induces a special test configuration of $(X,D)$ computing $M^{\bmu}(X,D)$. 
\end{proof}

\begin{rem} 
While the result is stated for closed points of $Z$, the argument extends to geometric points. Specifically, morphisms $z':= \Spec(k') \to Z$, where $k'$ is an algebraically closed field, the conclusion of the proposition holds.  
\end{rem}

Using
 Proposition  \ref{p:descending}.2, we deduce the following statement.

 \begin{prop}\label{c:tfae}
 Let $(X,D)$ be a K-unstable log Fano pair. The following are equivalent: 
\begin{enumerate}
\item 
there exists a test configuration $(\cX,\cD)$ so that $M^{\bmu}(\cX,\cD) = \bmu(\cX,\cD)$;
\item  there exists a test configuration $(\cX,\cD)$ so that $M^{\bmu}_1(\cX,\cD) = \mu_1(\cX,\cD)$;
\item  the value $M_2^{\bmu}(X,D)$ is finite.
\end{enumerate}
 \end{prop}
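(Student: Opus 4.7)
The plan is to establish $(1)\Rightarrow(2)\Leftrightarrow(3)$ and then the main implication $(2)\Rightarrow(1)$. The implication $(1)\Rightarrow(2)$ is immediate, since $\bmu=(\mu_1,\mu_2)$ and any minimizer under the lexicographic order must agree with $M^{\bmu}(X,D)$ on the first coordinate. For $(2)\Leftrightarrow(3)$, recall that $M_2^{\bmu}(X,D)=+\infty$ by convention precisely when no test configuration achieves $\mu_1=M_1^{\bmu}(X,D)$, so $(3)\Rightarrow(2)$ is immediate. For $(2)\Rightarrow(3)$, the test configuration provided by (2) gives an upper bound $M_2^{\bmu}(X,D)\le\mu_2(\cX,\cD)<+\infty$, while the lower bound follows from \eqref{e:mnorm<clnorm}: any test configuration with $\mu_1=M_1^{\bmu}\in(-1,0)$ has $\Fut<0$, so $\mu_2\ge c\mu_1\ge cM_1^{\bmu}>-c$, and in particular $M_2^{\bmu}(X,D)\in\R$.

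The main content is $(2)\Rightarrow(1)$. Given (2), Theorem \ref{t:BLZinf}.1 applied to $(X,D)$ tells us that Conjecture \ref{c:optdest} holds for this specific pair. Hence by Proposition \ref{p:M(X,D)=infl}, after embedding $(X,D)\hookrightarrow\PP^m$ as a point $z$ of the parameter space $Z$ of Section \ref{ss:paramtest} (with $\epsilon:=\delta(X,D)>0$ and appropriate $r$), we have
\[
M^{\bmu}(X,D)=\inf_{\lambda\in\Hom(\G_m,\PGL)}\bmu(z,\lambda),
\]
and it suffices to show the right-hand infimum is attained. I would then rerun the attainment argument in the proof of Proposition \ref{p:descending}.2: via the conjugation identity \eqref{e:infmulav} reduce to a double infimum $\inf_{g\in\PGL}\inf_{v\in N}\bmu(gz,v)$, where $N=\Hom(\G_m,\T)$ for a maximal torus $\T\subset\PGL$; invoke the finite locally closed decomposition $Z=\sqcup_i Z_i$ of Proposition \ref{p:mubehaviour}.1 on which $\bmu(\cdot,v)$ depends only on the stratum, collapsing the outer infimum to a minimum over the finitely many functions $\bmu^i\colon N\to\R^2\cup\{(+\infty,+\infty)\}$ coming from strata meeting $\PGL\cdot z$; and on each such stratum apply Proposition \ref{p:mubehaviour}.2, which guarantees attainment whenever $\bmu^i$ takes a value below $\mathbf{0}$. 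Selecting the best of these finitely many stratum-level minimizers yields a 1-PS of $\PGL$, and the associated special test configuration realizes $M^{\bmu}(X,D)$.

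The main subtlety is not geometric but a matter of tracking hypotheses: although Proposition \ref{p:descending} is stated under the global Conjecture \ref{c:optdest}, its proof of part (2) only uses the conjecture through Proposition \ref{p:M(X,D)=infl} applied to the single pair $(X,D)$ under consideration. Consequently the argument transfers once one has supplied Conjecture \ref{c:optdest} for $(X,D)$ alone, which is exactly the content of (2) by Theorem \ref{t:BLZinf}. Beyond this bookkeeping, all the heavy lifting --- the polyhedral structure and quasi-convexity of $\bmu$ on a torus (Propositions \ref{p:quasiconvex} and \ref{p:minimizeroncone}) together with the finiteness of the stratification --- is already packaged in Section \ref{s:logFanotorus}, so no essentially new input should be required.
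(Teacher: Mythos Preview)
Your proposal is correct and follows essentially the same route as the paper: the implications $(1)\Rightarrow(2)$ and $(2)\Leftrightarrow(3)$ are declared clear, and $(2)\Rightarrow(1)$ is obtained by invoking Proposition~\ref{p:descending}.2. You have in fact been more careful than the paper's one-line citation, correctly isolating the key bookkeeping point that the global hypothesis of Conjecture~\ref{c:optdest} in Proposition~\ref{p:descending} is only used, via Proposition~\ref{p:M(X,D)=infl}, at the single pair $(X,D)$---and that hypothesis is supplied by $(2)$ together with Theorem~\ref{t:BLZinf}.1.
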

 
 \begin{proof}
It is clear that (1) $\Rightarrow$ (2) and (2) $\Leftrightarrow (3)$.  By Proposition \ref{p:descending}.2, (2) $\Rightarrow$ (1) holds.
 \end{proof}

\begin{proof}[Proof of Theorem \ref{t:HNfilts}.1] Combining Theorem \ref{t:BLZinf}.1 and Proposition \ref{c:tfae} yields the  result.
\end{proof}


\section{Uniqueness of minimizers}\label{s:uniqueness}

In this section,  we prove Theorem \ref{t:HNfilts}.2  on the uniqueness of test configurations minimizing  $\bmu$ for K-unstable log Fano pairs. 
The proof uses a result from  \cite{BLZ19} stating that any two test configurations minimizing $\bmu$ can be connected via an equivariant family over $\A^2$ and properties of $\bmu$ described in Section \ref{s:logFanotorus}.

\subsection{Equivariant degenerations over affine two-space}

Let $(X,D)$ be a log Fano pair and $g:(\fX,\fD)\to \A^2$
 a family of log Fano pairs with the data of:
\begin{enumerate}
    \item an isomorphism $(\fX_{\bf 1} , \fD_{\bf 1}) \simeq (X,D)$, where ${\bf 1}: = (1,1) \in \A^2$, and
    \item  a $\G_m^2$-action on $(\fX, \fD)$ extending the standard  $\G_m^2$-action on $\A^2$.
    \end{enumerate}
The restrictions 
$ (\fX,\fD)_{ \A^1 \times 1} $
and 
$ 
(\fX,\fD)_{ 1\times \A^1} 
$ with
$\G_m$-actions given by $ \G_m \times 1 $ and $1\times  \G_m$
are naturally special test configurations of $(X,D)$, 
and we denote them  by $(\cX^1,\cD^1)$ and $(\cX^2,\cD^2)$.
Families of the above form appear in \cite{HL14,LWX18,BLZ19}.

The family $(\fX, \fD)$
can be recovered from the data of the test configurations $(\cX^1, \cD^1)$ and $(\cX^2,\cD^2)$. 
Indeed, fix an integer $r>0$ such that $\mathfrak{L}:=-r(K_{\fX/\A^2}+\fD)$ is a Cartier divisor. Set  $\cL^i:=-r(K_{\cX^i/\A^2}+\cD^i)$ and 
$L=-r(K_X+\Delta)$.
Write 
\[
R (X,L): = \bigoplus_{m \in \Z} R_m = \bigoplus_{m \in \Z}H^0(X, \cO_{X}(mL))\]
for the section ring of $L$. 
The test configurations $(\cX^1, \cD^1)$ and $(\cX^2, \cD^2)$  induce decreasing multiplicative $\Z$-filtrations $\cF^\bullet $ and $\cG^\bullet $ of $R(X,L)$ that recover the data of the test configuration; see \cite[Section 2.5]{BHJ17}.
In particular, there exists a  $\G_m$-equivariant isomorphism of graded $k[t]$-algebras
\[
\bigoplus_{m \in \N} H^0 (\cX^1,m \cL^1)
\simeq
\bigoplus_{m \in \N} \bigoplus_{p \in \Z} ( \cF^p R_m ) t^{-p}
,\]
where the $\G_m$-action on the right hand side algebra is induced by the  grading by $p$,
and a corresponding isomorphism holds for $\cX^2$ in terms of the filtration $\cG$.

\begin{lem}\label{l:familya2}
There is a $\G_m^2$-equivariant isomorphism of graded $k[x,y]$-algebras
\[
\bigoplus_{m \in \N} 
H^0 (\fX,m \fL)
\simeq 
 \bigoplus_{m \in \N} \bigoplus_{(p,q) \in \Z^2 } ( \cF^{p} R_m \cap \cG^q R_m ) x^{-p}y^{-q}
.
\]
Furthermore, $\bigoplus_{m \in \N} 
H^0 (\fX_{\bf 0},m \fL_{\bf 0})  \simeq  \bigoplus_{m } \bigoplus_{p,q} \gr^{p,q} V_m  $,
where \[
\gr^{p,q} R_m : = ( \cF^p R_m \cap \cG^q R_m )/ ( \cF^{p+1} R_m \cap \cG^q R_m  + \cF^{p} R_m \cap \cG^{q+1} R_m ).  \]
\end{lem}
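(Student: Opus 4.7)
Set $V_m := H^0(\fX, m\fL)$. For $r$ sufficiently divisible, $\fL$ is $g$-very ample, and Kawamata--Viehweg vanishing on fibers combined with cohomology and base change (cf.\ the argument of Lemma~\ref{l:futnormfiberwise}) yields that $V_m$ is a locally free $k[x,y]$-module of rank $N_m := \dim R_m$, compatible with arbitrary base change. The $\G_m^2$-action on $(\fX, \fL)$ induces a compatible $\Z^2$-grading on $V_m$, with $x, y$ of weights $(-1, 0)$ and $(0, -1)$. The orbit of $\mathbf{1}$ trivializes $\fX|_{\G_m^2} \cong X \times \G_m^2$, yielding a $\G_m^2$-equivariant embedding $V_m \hookrightarrow R_m \otimes_k k[x^{\pm}, y^{\pm}]$. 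Since the weight-$(p,q)$ subspace of $R_m \otimes k[x^\pm, y^\pm]$ is $R_m \cdot x^{-p} y^{-q}$, I can write the weight-$(p,q)$ part of $V_m$ as $W^{p,q} \cdot x^{-p} y^{-q}$ for unique subspaces $W^{p,q} \subseteq R_m$, with $W^{p+1, q}, W^{p, q+1} \subseteq W^{p,q}$ by compatibility with the $k[x,y]$-action.

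For the containment $W^{p,q} \subseteq \cF^p R_m \cap \cG^q R_m$, base change along $k[x,y] \to k[x,y]/(y-1) \cong k[x]$ combined with the Rees-algebra description of a test configuration recalled just before the lemma gives $V_m/(y-1) \cong H^0(\cX^1, m\cL^1) = \bigoplus_p \cF^p R_m \cdot x^{-p} \subset R_m \otimes k[x^\pm]$. Matching first-$\G_m$ weight-$p$ parts on both sides yields $W^{p,q} \subseteq \cF^p R_m$; a symmetric argument modulo $(x-1)$ yields $W^{p,q} \subseteq \cG^q R_m$. Let $V'_m := \bigoplus_{p,q} (\cF^p R_m \cap \cG^q R_m) \cdot x^{-p} y^{-q}$ and $Q_m := V'_m / V_m$, so $V_m \subseteq V'_m$ as $\G_m^2$-equivariant $k[x,y]$-submodules of $R_m \otimes k[x^\pm, y^\pm]$.

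The main obstacle is to show $Q_m = 0$, which I would prove by a support-and-dimension argument. Direct inspection gives $V'_m/(y-1) \cong \bigoplus_p \cF^p R_m \cdot x^{-p} \cong V_m/(y-1)$ and similarly modulo $(x-1)$, so by Nakayama the coherent sheaf $\widetilde Q_m$ on $\A^2$ vanishes on the lines $\{y = 1\} \cup \{x = 1\}$. Since $\Supp \widetilde Q_m$ is closed and $\G_m^2$-invariant, and every point with nonzero second coordinate lies in a second-$\G_m$-orbit meeting $\{y = 1\}$ (and symmetrically for the first coordinate), the support is contained in $\{\mathbf{0}\}$, so $Q_m$ has finite length. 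Next, $V'_m$ is locally free of rank $N_m$ on $\A^2 \setminus \{\mathbf{0}\}$ via the computation $V'_m[y^{-1}] \cong H^0(\cX^1, m\cL^1) \otimes k[y^{\pm}]$ and its $x$-analog, while at the origin telescoping the bifiltration gives $\dim V'_m/(x,y)V'_m = \dim \bigoplus_{p,q} \gr^{p,q} R_m = N_m$; semicontinuity then forces $V'_m$ to be locally free of rank $N_m$ on all of $\A^2$. Since both $V_m$ and $V'_m$ are locally free, the long exact $\mathrm{Tor}$ sequence applied to $0 \to V_m \to V'_m \to Q_m \to 0$ forces $\mathrm{Tor}^{k[x,y]}_i(Q_m, k) = 0$ for $i \geq 2$. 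If $Q_m \neq 0$, it is a nonzero finite-length module over the regular local ring $k[x,y]_{(x,y)}$ with depth $0$, and Auslander--Buchsbaum forces $\mathrm{pd}(Q_m) = 2$, so $\mathrm{Tor}_2(Q_m, k) \neq 0$, a contradiction. Thus $Q_m = 0$, proving the first isomorphism. The second isomorphism then follows from $V_m/(x,y)V_m \cong H^0(\fX_{\bf 0}, m\fL_{\bf 0})$ by base change, combined with the already-computed identity $V'_m/(x,y)V'_m = \bigoplus_{p,q} \gr^{p,q} R_m$.
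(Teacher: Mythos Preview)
Your proof is correct, but it takes a more elaborate route than the paper's. The paper's argument hinges on a single observation you do not use: since $g_*\cO_{\fX}(m\fL)$ is locally free on the normal scheme $\A^2$ and $\{\mathbf 0\}$ has codimension~$2$, Hartogs gives
\[
H^0(\A^2, g_*\cO_{\fX}(m\fL)) \;\cong\; H^0(\A^2\setminus \mathbf 0,\, j^*g_*\cO_{\fX}(m\fL)).
\]
The right-hand side is then computed directly from the cover $\A^2\setminus \mathbf 0 = U_x \cup U_y$, using the trivializations $\fX|_{U_y}\simeq \cX^1\times(\A^1\setminus 0)$ and $\fX|_{U_x}\simeq \cX^2\times(\A^1\setminus 0)$; the sections over $U_y$ and $U_x$ are $\bigoplus_{p,q}(\cF^pR_m)x^{-p}y^{-q}$ and $\bigoplus_{p,q}(\cG^qR_m)x^{-p}y^{-q}$ respectively, and their intersection inside $R_m\otimes k[x^\pm,y^\pm]$ is exactly $V'_m$. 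The second statement then follows in one line by tensoring with $k[x,y]/(x,y)$.

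Your approach replaces the Hartogs step with a homological argument: you establish the inclusion $V_m\subseteq V'_m$ by restricting to the lines $\{y=1\}$ and $\{x=1\}$, then show equality by proving both modules are locally free of rank $N_m$ and invoking Auslander--Buchsbaum to rule out a nonzero finite-length cokernel. This is essentially a hands-on proof that two reflexive sheaves on $\A^2$ agreeing on $\A^2\setminus\mathbf 0$ must be equal. It works, and it has the minor virtue of making the local freeness of $V'_m$ and the identity $\dim\bigoplus_{p,q}\gr^{p,q}R_m=N_m$ explicit, but the paper's route is shorter and more transparent: once you recognize the $S_2$ extension property, the answer is read off from the open cover without any Tor computations.
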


\begin{proof}
Note that $g_*  \cO_{\fX}(m \fL)$ is a vector bundle on $\A^2$ and commutes with base change, since $H^i \left(\fX_t, \cO_{\fX_t}(m \fL_t) \right)= 0 $ for all $i>0$ and $t\in \A^2$ by Kawamata-Viehweg vanishing. 
Therefore,
\[
H^0 \left(\fX,  \cO_{\fX}(m\cL)\right) 
\simeq 
H^0 \left( \A^2,  g_*\cO_{\fX}(m\cL)\right)
\simeq 
H^0 \left(\A^2 \setminus 0, j^* g_* \cO_{\fX}(m\cL)\right) ,
\]
where  $j: \A^2 \setminus {\bf 0} \hookrightarrow \A^2$. 
To compute the latter module, consider the open immersions
\[
j_x: U_x \hookrightarrow \A^2,
\quad 
j_y: U_y 
\hookrightarrow \A^2,
\quad 
\text{ and }
\quad
j_{xy}: U_{xy} \hookrightarrow \A^2,\]
given by the loci where $x$, $y$, and $xy$ do not vanish. 
Using that 
$\fX\vert_{U_x} \simeq \cX_2 \times (\A^1\setminus 0)$,
 $\fX\vert_{U_y} \simeq \cX_1 \times (\A^1\setminus 0)$, 
 and $\fX_{U_{xy}} \simeq X\times (\A^1\setminus 0)^2$,
we see 
\begin{equation*}
H^0\left(U_x, j_x^* g_* 
\cO_{\fX}(m\fL)\right) \simeq  \bigoplus_{(p,q) \in \Z^2} (\cG^q R_m) x^{-p} y^{-q} 
\, \text{ and } \,
H^0 \left( U_y, j_y^* g_* 
\cO_{\fX}(m\fL) \right) \simeq  \bigoplus_{(p,q) \in \Z^2} (\cF^{p} R_m) x^{-p} y^{-q} 
,
\end{equation*}
and are both contained in 
\[
H^0 \left( U_{xy}, j_{xy}^* g_* \cO_{\fX}(m\fL) \right) \simeq \bigoplus_{(p,q) \in \Z^2} R_m x^{-p}y^{-q}.
\]
Using that  $\A^2 \setminus {\bf 0}= U_x \cup U_y$, we conclude that the first statement holds.

For the second statement, note that
$ H^0 (\fX_0, m\fL_0) \simeq 
 H^0 (\fX, m\fL)  \otimes  k[x,y]/(x,y)$.
Since
\[
(x,y) \big(  \bigoplus_{p,q} (\cF^p R_m \cap \cG^q R_m) x^{-p} y^{-q} \big)= 
 \bigoplus_{p,q} ( \cF^{p+1} R_m \cap \cG^q R_m + 
\cF^p R_m \cap \cG^{q+1} R_m )x^{-p} y^{-q}
,\]
the second statements holds.
\end{proof}

The following proposition is a special  case of a statement regarding maps from $[\A^2/ \G_m^2]$ to  algebraic stacks \cite[Lemma 4.23]{HL14}. We provide a proof using the previous lemma.

\begin{prop}\label{p:kernel}
If $\ker(\G_m^2 \to \Aut( \fX_{\bf 0}, \fD_{\bf 0}))$ 
contains $\{ (t,t^{-1}) \, \vert \, t\in \G_m\}$, then
$(\cX^1,\cD^1)$ and $(\cX^2,\cD^2)$
are isomorphic as test configurations.
\end{prop}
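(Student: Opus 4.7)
The plan is to use Lemma \ref{l:familya2} to reduce the conclusion to showing that the two decreasing $\Z$-filtrations $\cF^\bullet$ and $\cG^\bullet$ on $R(X,L)$ induced by $(\cX^1,\cD^1)$ and $(\cX^2,\cD^2)$ coincide. Since a special test configuration is determined by its induced filtration through the Rees construction described just above the lemma, the equality $\cF = \cG$ automatically yields a $\G_m$-equivariant isomorphism of Rees algebras (sending $t \mapsto u$) identifying the two test configurations as test configurations of $(X,D)$.

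First I would extract the weight information on the central fiber. The $\G_m^2$-equivariant structure on $\fL = -r(K_{\fX/\A^2}+\fD)$ is the canonical one inherited from the action on the pair $(\fX,\fD)$, and it restricts to the canonical $\G_m^2$-equivariant structure on $\fL_0 = -r(K_{\fX_0}+\fD_0)$ coming from the induced action on $(\fX_0,\fD_0)$. Thus if the antidiagonal $\{(t,t^{-1})\}$ lies in the kernel of $\G_m^2 \to \Aut(\fX_0,\fD_0)$, it acts trivially on the pair, hence trivially on $\fL_0$ with its natural equivariant structure, and therefore trivially on every $H^0(\fX_0, m\fL_0)$. Tracking weights in Lemma \ref{l:familya2}, the piece $(\cF^p R_m \cap \cG^q R_m)\,x^{-p}y^{-q}$ lives in weight $(p,q)$ under $\G_m^2$, so the antidiagonal acts on $\gr^{p,q} R_m$ via the character $t^{p-q}$; triviality forces
\[
\gr^{p,q} R_m = 0 \quad \text{whenever} \quad p \neq q.
\]

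The second step is a purely linear-algebra deduction of $\cF = \cG$ from this diagonal-support condition. Translating the vanishing of $\gr^{p,q}$ gives, for each pair $p \neq q$, the identity
\[
\cF^p R_m \cap \cG^q R_m = \cF^{p+1} R_m \cap \cG^q R_m + \cF^p R_m \cap \cG^{q+1} R_m.
\]
A straightforward induction on $p-q$ (using that both filtrations are bounded, as they arise from test configurations) then shows that every $v \in \cF^p R_m$ decomposes as $v = v' + v''$ with $v' \in \cF^p R_m \cap \cG^p R_m$ and $v'' \in \cF^{p+1} R_m$. Iterating in $p$ yields $\cF^p R_m = \sum_{p'\geq p}\bigl(\cF^{p'} R_m \cap \cG^{p'} R_m\bigr)$, and by the symmetric argument $\cG^p R_m$ equals the same sum, so $\cF^p R_m = \cG^p R_m$ for all $m,p$.

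The main obstacle, such as it is, lies in the first step: one must justify that the $\G_m^2$-equivariant structure on $\fL_0$ obtained by restricting $\fL$ from $\fX$ genuinely agrees with the canonical equivariant structure on $-r(K_{\fX_0}+\fD_0)$ induced by the $\G_m^2$-action on the pair, so that trivial action on $(\fX_0,\fD_0)$ really does force trivial action on global sections of $\fL_0$. Once this identification of equivariant line bundles is in hand, the diagonal-support conclusion is immediate, and the remaining filtration manipulation is elementary.
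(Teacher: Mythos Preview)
Your proof is correct and follows the same overall architecture as the paper's: use Lemma~\ref{l:familya2} to see that the antidiagonal acts on $\gr^{p,q}R_m$ with weight $p-q$, conclude $\gr^{p,q}R_m=0$ for $p\neq q$, deduce $\cF^\bullet=\cG^\bullet$, and hence identify the two test configurations.

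The one substantive difference is in how you pass from the diagonal-support condition to $\cF=\cG$. The paper invokes \cite[Lemma~3.1]{AZ20} to produce a single basis of $R_m$ compatible with both filtrations simultaneously; the vanishing of off-diagonal graded pieces then immediately forces $\ord_\cF(s_i)=\ord_\cG(s_i)$ for each basis element. Your inductive argument instead manipulates the filtrations directly, using boundedness to climb from $\cF^p\cap\cG^{q_{\min}}$ up to $\cF^p\cap\cG^p$ modulo $\cF^{p+1}$, and then iterating in $p$. Both are valid; yours is more self-contained (no external citation), while the compatible-basis approach is a one-liner once the lemma is in hand. Your flagged concern about matching the restricted and intrinsic equivariant structures on $\fL_{\bf 0}$ is a fair point that the paper leaves implicit; it is standard, since $\fL=-r(K_{\fX/\A^2}+\fD)$ carries its canonical linearization and restriction to the fiber commutes with this.
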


\begin{proof}
Let $\rho:\G_m \to \G_m^2$ denote the 1-PS defined by $t\mapsto(t,t^{-1})$.
By assumption, $\rho$
acts trivially on $(\cX_{\bf 0 },\cD_{\bf 0 })$ and, hence,  acts trivially on $H^0 (\fX_{\bf 0},m \fL_{\bf 0})$, which is isomorphic to $\oplus_{p,q} \gr^{p,q}R_m$ by Lemma \ref{l:familya2}.
Since $\rho$ acts with weight  $p-q$ on $\gr^{p,q} R_m$, this means $\gr^{p,q} R_m = 0$ if $p-q\neq 0$.

The latter implies the filtrations $\cF^\bullet$ and $\cG^\bullet$ of $R_m$
are equal. Indeed, by \cite[Lemma 3.1]{AZ20}, there exists a basis $\{s_1,\ldots, s_{N_m} \}$ of $R_m$ 
such that 
\[
\cF^p R_m = {\rm span} \langle s_i \, \vert \, \ord_{\cF}(s_i) \geq p \rangle\quad \text{ and } \quad 
\cG^q R_m = {\rm span} \langle  s_i \, \vert \, \ord_{\cG}(s_i) \geq q\rangle,\]
where $\ord_{\cF}(s_i): = \max\{ p \, \vert \, s_i \in \cF^p R_m \}$ and  $\ord_{\cG}(s_i) := \max\{ q \vert s_i \in \cG^q R_m \}$. 
Since $\gr^{p,q} R_m $ has basis given by 
$\{ \overline{s_i} \, \vert \, \ord_{\cF}(s_i) =p  \text{ and }\ord_{\cG}(s_i)=q \}$, the  vanishing of $\gr^{p,q}R_m$ for $p\neq q$ implies $\ord_{\cF}(s_i) = \ord_{\cG}(s_i)$ for each $i$ and, hence, $\cF^\bullet  = \cG^\bullet $.
Therefore,  $(\cX^1,\cD^1)$ and $(\cX^2,\cD^2)$ are isomorphic as test configurations.
\end{proof}

\subsection{Proof of Theorem \ref{t:HNfilts}.2}

The proof of  Theorem \ref{t:HNfilts}.2
uses the following result that builds on earlier work from  \cite[Proof of Theorem 3.2]{LWX18}).

\begin{thm}[{\cite[Theorem 5.4]{BLZ19}}]
\label{t:BLZ}
Let $(\cX^1,\cD^1)$ and $(\cX^2,\cD^2)$ be test configurations of a log Fano pair $(X,D)$ that is K-unstable.
If
\begin{equation}\label{eq:mu1=m}
\mu_1 (\cX^1, \cD^1) = M_1^{\bmu}(X,D) =\mu_1(\cX^2,\cD^2)
,\end{equation}
then there exists  $\G_m^2$-equivariant family of log Fano pairs $g: (\fX,\fD) \to \A^2$ with the data of an isomorphism $(\fX_{ {\bf 1}},\fD_{ {\bf 1}}) \simeq (X,D)$ such that their are isomorphisms of test configurations
\[
(\cX^1,\cD^1) \simeq (\fX,\fD)_{1\times \A^1}
\quad \text{ and } \quad (\cX^2,\cD^2) \simeq (\fX,\fD)_{1\times \A^1}.\]
\end{thm}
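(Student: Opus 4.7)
\medskip

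\noindent\textbf{Proof proposal.} The plan is to build the two-parameter family $(\fX,\fD)\to\A^2$ directly from the bi-filtered Rees algebra on $R(X,L)$ determined by the two test configurations, and then to use the hypothesis $\mu_1(\cX^i,\cD^i)=M_1^{\bmu}(X,D)=\delta(X,D)-1$ to verify that the resulting family has log Fano fibers. Fix $r\gg 0$ so that $L:=-r(K_X+D)$ is very ample, and let $\cF_i^\bullet$ denote the $\Z$-filtration of $R(X,L)$ induced by $(\cX^i,\cD^i)$ as recalled before Lemma \ref{l:familya2}. Form
\[
\cR\;:=\;\bigoplus_{m\in\N}\bigoplus_{(p,q)\in\Z^2}\bigl(\cF_1^p R_m\cap\cF_2^q R_m\bigr)\,x^{-p}y^{-q}\;\subset\;R(X,L)[x^{\pm1},y^{\pm1}],
\]
which carries a natural $\G_m^2$-action and an inclusion of $k[x,y]$. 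Assuming $\cR$ is finitely generated over $k[x,y]$, the candidate family is $\fX:=\Proj_{\A^2}\cR$ with $\fL:=\cO_{\fX}(1)$ and $\fD$ the natural extension of $D$ across $\A^2\setminus 0$ together with its K-flat Mumford limit at the origin.

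Granting finite generation, the coordinate-line restrictions of $\cR$ recover the Rees algebras of $\cF_1^\bullet$ and $\cF_2^\bullet$, so by the description reviewed before Lemma \ref{l:familya2} one gets
$(\fX,\fD)|_{\A^1\times 1}\simeq(\cX^1,\cD^1)$ and $(\fX,\fD)|_{1\times\A^1}\simeq(\cX^2,\cD^2)$ equivariantly, and $(\fX_{\bf1},\fD_{\bf1})\simeq(X,D)$. The central fiber is, by Lemma \ref{l:familya2}, the $\Proj$ of the bi-graded $\bigoplus_{p,q}\gr^{p,q}R_m$ together with the induced boundary. The remaining task is to show that $(\fX_{\bf 0},\fD_{\bf 0})$ is a klt log Fano pair; the other fibers are automatic since they are either isomorphic to $(X,D)$ or to one of the central fibers $(\cX^i_0,\cD^i_0)$, which are klt log Fano by hypothesis.

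The main input needed is the equivalence, proved in \cite{BLZ19}, between ``$\mu_1(\cX,\cD)$ is minimal $=\delta(X,D)-1$'' and ``$(\cX,\cD)$ is a $\beta$-twisted K-semistable degeneration for $\beta=\delta(X,D)$,'' together with $\delta(\cX^i_0,\cD^i_0)=\delta(X,D)$ (Theorem \ref{t:BLZinf}(2)). I would then run a joint MMP: choose a projective birational model $\pi:Y\to X$ extracting the divisorial valuations associated to the two test configurations (or their quasi-monomial generalizations), pull back the log Fano polarization, and run a relative $(K+\text{boundary})$-MMP with scaling over $\A^2$, analogous to \cite{LWX18} for $\A^1$ in the polystable case. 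The twisted K-semistability of both $(\cX^i_0,\cD^i_0)$ guarantees that the MMP terminates with an ample model whose section ring over $\A^2$ is finitely generated and agrees with $\cR$, and that $K_{\fX/\A^2}+\fD$ is $\Q$-Cartier and anti-ample on fibers; inversion of adjunction combined with the klt-ness of the two coordinate-axis central fibers then yields klt-ness of $(\fX_{\bf 0},\fD_{\bf 0})$.

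The principal obstacle is exactly the finite-generation step together with the control of the central fiber, i.e.\ running the MMP equivariantly with respect to $\G_m^2$ and showing it terminates. Finite generation of a bi-filtered Rees algebra is not automatic from finite generation of each slice, and here one relies crucially on the fact that both filtrations come from \emph{minimizers} of $\mu_1$, so that they are determined by valuations with finitely generated associated graded algebras and with comparable supports on $X$. The $\G_m^2$-equivariance of the whole construction is then inherited from the bi-grading on $\cR$ and is preserved by the MMP since the scaling is equivariant, which gives the desired $\G_m^2$-action on $(\fX,\fD)$ extending the standard one on $\A^2$.
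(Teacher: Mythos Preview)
This theorem is quoted from \cite{BLZ19} and the present paper does not supply a proof; it is invoked as an external input. So strictly speaking there is no proof in this paper to compare your proposal against.

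That said, your sketch inverts the logic of the actual argument in \cite{BLZ19} (which in turn follows \cite[Proof of Theorem~3.2]{LWX18}). There one does not first form the bi-Rees algebra $\cR$ and then attempt to verify finite generation; rather, the family $(\fX,\fD)\to\A^2$ is constructed geometrically by extending one test configuration across the other. Concretely: regard $(\cX^2,\cD^2)$ as a test configuration of the generic fiber of the family $(\cX^1,\cD^1)\to\A^1$, and extend it over the closed point of $\A^1$ using the same MMP-based extension mechanism that underlies Proposition~\ref{t:BLZTheta}. The hypothesis $\mu_1(\cX^2,\cD^2)=M_1^{\bmu}(X,D)=M_1^{\bmu}(\cX^1_0,\cD^1_0)$ (the last equality by Theorem~\ref{t:BLZinf}(2)) is exactly the input that makes this extension go through. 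The output is automatically a $\G_m^2$-equivariant family of log Fano pairs, and Lemma~\ref{l:familya2} then identifies its section ring with your $\cR$ \emph{a posteriori}; finite generation is a consequence, not a hypothesis to be discharged.

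Your proposal instead makes finite generation of $\cR$ the central obstacle and offers to resolve it by ``extracting the divisorial valuations associated to the two test configurations'' on $X$ and running an MMP ``over $\A^2$.'' This is where the gap lies: an MMP for a bi-filtered object directly over $\A^2$, starting from data on $X$, is not a standard construction, and setting it up rigorously would amount to redoing the extension argument anyway. The one-parameter extension route avoids this entirely by reducing to the special-test-configuration MMP of \cite{LX14}, which is already available over a DVR base.
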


\begin{proof}[Proof of Theorem \ref{t:HNfilts}.2]
Let
$(\cX^1, \cD^1)$ and $(\cX^{2},\cD^{2})$ be special test configurations of a K-unstable log Fano pair $(X,D)$ satisfying
\[
\bmu (\cX^{1}, \cD^{1})=M^{\bmu}(X,D)=\bmu(\cX^{2}, \cD^{2}) 
.\]
Since $(X,D)$ is K-unstable, $\Fut(\cX^i,\cD^i)<0$ for $i=1,2$.
Therefore, we may scale $(\cX^1, \cD^1)$ and $(\cX^{2},\cD^{2})$
such that
$\Fut(\cX^1,\cD^1) =F= \Fut(\cX^2,\cD^2)$
for some $F<0$.

Using that $\R^2$ is endowed with the lexicographic order,   \eqref{eq:mu1=m}
holds.
Let  ${(\fX ,\fD) \to \mathbb{A}^2}$ 
denote the $\G_m^2$-equivariant family of log Fano pairs satisfying the conclusion of
Theorem \ref{t:BLZ}. Consider the induced $\G_m^2$-action on $(\fX,\fD)_{{\bf 0}}$
and the functions $\Fut(\,) $ and $\bmu(\, )$ on
$ N_{\R}\simeq \R^2$, where $N:=\Hom( \G_m, \G_m^2)$.
Note that
\[
\bmu( 1,0 )
= 
\bmu(\cX^{1},\cD^{1})
\quad 
\text{ and }
\quad 
\bmu( 0,1 )
= 
\bmu(\cX^{2},\cD^{2}),
\]
which are equal to $M^{\bmu}(X,D)$ by assumption. 
Additionally,
$\bmu ( a,b ) \geq M^{\bmu}(X,D)$ for all $(a,b)\in \Z_{\geq 0}^2$,
since pulling back $(\fX,\fD)\to \A^2$ via the map $\A^1 \to \A^2$ sending $t\mapsto (t^a,t^b)$ induces a test configurations $(\cX^{(a,b)}, \cD^{(a,b)})$ of $(X,D)$ and 
$ \bmu (a,b) = \bmu(\cX^{(a,b)}, \cD^{(a,b)}) \geq M^{\bmu}(X,D)$. Therefore, \[
\bmu: \R_{\geq 0}^2 \cap (\N^2 \setminus (0,0) ) \to \R^2\]
is minimized at both $(1,0)$ and $(0,1)$. 
The previous statement combined with Proposition \ref{p:minimizeroncone} implies that
$\G_m^2 \to \Aut(X,D)$ has a positive dimensional kernel. Therefore, there exists $(0,0)\neq (a,b) \in \Z^2 $ such that the 1-PS $ \G_m \to \G_m^2$ defined by $t\mapsto (t^{a},t^b)$ acts trivially on $(\fX,\fD)_{{\bf 0}}$. 
Since
 \[
0=\Fut(a,b ) = a \Fut(1,0) +b \Fut(0,1) =  aF+bF,
 \]
 where the first inequality uses that the action is trivial and the second is the linearity of $\Fut$,
 we see $a=-b$ and, hence,
$\{ (t, t^{-1}) \, \vert \, t\in \G_m\} \subset \ker ( \G_m^2 \to \Aut(X,D))$. 
Applying  Proposition \ref{p:kernel}, we conclude $(\cX^1,\cD^1)\simeq (\cX^2,\cD^2)$.
\end{proof}

\subsection{Equivariant test configurations}

The uniqueness of minimizers (Theorem \ref{t:HNfilts}.2) has the following immediate consequence. 

\begin{cor}\label{c:equiv}
Let $(X,D)$ be a log Fano pair with the action of a group $G$. 
If $(X,D)$ is K-unstable and $(\cX,\cD)$ computes $M^{\bmu}(X,D)$, then 
$(\cX,\cD)$ is $G$-equivariant.
\end{cor}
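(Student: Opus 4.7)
The plan is to first show set-theoretically that each $g \in G$ carries the minimizing test configuration to an isomorphic one, and then to upgrade this observation into a genuine $G$-action on $(\cX,\cD)$ via the Rees-algebra / filtration description.

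First, for any $g \in G$, I would consider the twist $g^*(\cX,\cD)$: the same $\G_m$-equivariant family over $\A^1$ with the same $\cD$, but with the generic-fiber identification $\cX_1 \simeq (X,D)$ precomposed with $g^{-1}$. Since $\Fut$, $\mnorm{\,\cdot\,}$, and $\lnorm{\,\cdot\,}$ depend only on the underlying $\G_m$-family and its central fiber, they are unchanged by the twist, so $\bmu(g^*(\cX,\cD)) = \bmu(\cX,\cD) = M^{\bmu}(X,D)$ and $g^*(\cX,\cD)$ is again a minimizer.

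Next, Theorem \ref{t:HNfilts}.2 would give an isomorphism of test configurations $g^*(\cX,\cD) \simeq (\cX^{(d_g)}, \cD^{(d_g)})$ for some $d_g \in \Z_{>0}$. Comparing Futaki invariants, and using the scaling identity $\Fut(\cX^{(d)},\cD^{(d)}) = d\cdot\Fut(\cX,\cD)$ together with $\Fut(\cX,\cD) < 0$ (from K-instability), forces $d_g = 1$. Hence for every $g \in G$ one obtains an isomorphism $g^*(\cX,\cD) \simeq (\cX,\cD)$.

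Finally, to upgrade these individual isomorphisms to a genuine $G$-action, I would invoke the equivalence between special test configurations of $(X,D)$ and finitely generated $\G_m$-filtrations of the section ring $R := \bigoplus_{m \geq 0} H^0(X, \cO_X(-mr(K_X+D)))$ recalled at the start of Section \ref{s:uniqueness}. The $G$-action on $(X,D)$ induces a $G$-action on $R$, and the previous step translates to the statement that the filtration $\mathcal{F}^\bullet$ associated to $(\cX,\cD)$ is $G$-invariant. Consequently the Rees algebra $\bigoplus_{p \in \Z} \mathcal{F}^p R \cdot t^{-p}$ inherits a canonical $G$-action, trivial on $t$, commuting with the $\G_m$-grading, and recovering the $G$-action on $X$ on the generic fiber; taking $\Proj$ endows $(\cX,\cD)$ with the desired $G$-equivariant structure. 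The main subtlety lies precisely in this last step: the pointwise isomorphisms $g^*(\cX,\cD) \simeq (\cX,\cD)$ are only defined modulo automorphisms of the test configuration and need not compose into a group homomorphism, and the filtration viewpoint is what produces a canonical $G$-action rather than a mere collection of individual isomorphisms.
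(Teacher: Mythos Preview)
Your argument is correct and follows the same core idea as the paper: twist by $g$, observe the twist is still a $\bmu$-minimizer, and invoke uniqueness. Two minor differences are worth noting. First, the paper sidesteps your $d_g=1$ computation by appealing to the \emph{proof} (not the statement) of Theorem~\ref{t:HNfilts}.2: since $\Fut(\cX_g,\cD_g)=\Fut(\cX,\cD)$ already, no rescaling is needed and the argument there gives a direct isomorphism. Second, the paper takes as a black box the ``alternate characterization'' stated just before the corollary---that $G$-equivariance is equivalent to $(\cX_g,\cD_g)\simeq(\cX,\cD)$ for all $g$---and so stops where your second paragraph ends; your third paragraph via filtrations is effectively a proof of that characterization, making your write-up more self-contained at the cost of a little extra length.
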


Recall, a special test configuration $(\cX,\cD)$ is \emph{$G$-equivariant} if the
 induced $G$-action on $(\cX,\cD)_1$ extends to a $G$-action on $(\cX,\cD)$ that commutes with the $\G_m$-action. 
For an alternate characterization, note that, for $g\in G$,
postcomposing the isomorphism $(\cX,\cD)_{1} \simeq (X,D)$ by $g^{-1}:(X,D) \to   (X,D)$ induces a new  test configuration of $(X,D)$, which we denote by $(\cX_g,\cD_g)$. 
The  test configuration $(\cX,\cD)$ is $G$-equivariant if and only if $(\cX_g,\cD_g)\simeq (\cX,\cD)$ as test configurations for all $g\in G$.

\begin{proof}
Since $(\cX,\cD)$ computes $M^{\bmu}(X,D)$, $(\cX_g,\cD_g)$ also computes $M^{\bmu}(X,D)$ for any $g\in G$. Using that $\Fut(\cX,\cD)= \Fut(\cX_g,\cD_g)<0$, the proof of Theorem \ref{t:HNfilts}.2 implies $(\cX,\cD)$ and  $(\cX_g,\cD_g)$ are isomorphic as test configuration.
\end{proof}

\subsection{Behaviour under field extension}
Let $K'/K$ be an extension of  characteristic zero fields 
with $K'$ algebraically closed. Let $(X,D)$ be a log Fano pair defined over $K$, and set $(X',D')=(X_{K'},D_{K'})$. 

We seek to compare  $M^{\bmu}$ of the two log Fano pairs. (We note that while $M^{\bmu}(X,D)$ of log Fano pair  was defined in Section \ref{ss:bmu} when the base field is algebraically closed, the definition extends verbatim.)
Since any special test configuration of $(X,D)$ induces a special test configuration of $(X',D')$ via field extension, it is clear that
\begin{equation}\label{eq:K'/K}
M^{\bmu}(X',D') \leq M^{\bmu}(X,D).
\end{equation}

\begin{prop}\label{p:fieldextend}
The equality $M^{\bmu}(X',D')= M^{\bmu}(X,D)$ holds.
\end{prop}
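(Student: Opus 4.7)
The plan is to prove the reverse of \eqref{eq:K'/K}, namely $M^{\bmu}(X,D) \leq M^{\bmu}(X',D')$. I would first dispose of the easy cases: if $(X',D')$ is K-semistable, invariance of the stability threshold under algebraically closed base change gives $\delta(X,D) = \delta(X',D') \geq 1$, so $(X,D)$ is K-semistable and both sides equal ${\bf 0}$; if $(X',D')$ is K-unstable with $M_2^{\bmu}(X',D') = +\infty$, then $M_1^{\bmu}(X,D) = M_1^{\bmu}(X',D')$ by the same invariance, while any $K$-rational test configuration base-changes to a $K'$-rational one, forcing $M_2^{\bmu}(X,D) = +\infty$ as well. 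It remains to treat the case in which $M^{\bmu}(X',D')$ is attained by some $(\cX',\cD')$: the task is to descend $(\cX',\cD')$ to a test configuration of $(X,D)$ over $K$ with the same $\bmu$-value.

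To do so, I would leverage the parameter-space setup of \S\ref{ss:paramtest}. Pick $r \gg 0$ so that $L := -r(K_X+D)$ is very ample Cartier with vanishing higher cohomology, and embed $X \hookrightarrow \mathbb{P}^m_K$ via $|L|$, obtaining $z \in Z(K)$. After rescaling $(\cX',\cD')$ so that $-r(K_{\cX'/\A^1}+\cD')$ is $\G_m$-equivariantly very ample, it is induced, as in Proposition \ref{p:M(X,D)=infl}, by a 1-PS $\lambda : \G_m \to \PGL_{m+1}$ over $K'$ acting on $z_{K'}$. Fixing a $k$-split maximal torus $T \subset \PGL_{m+1}$ and choosing $g \in \PGL_{m+1}(K')$ with $\mu := g\lambda g^{-1} \in T$, the cocharacter $\mu$ lies in the field-independent lattice $N := \Hom(\G_m,T)$; setting $y := g \cdot z_{K'}$, we have $\bmu(\cX',\cD') = \bmu(y,\mu)$.

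By Proposition \ref{p:mubehaviour}.1 there is a finite decomposition $Z = \bigsqcup_i Z_i$ into $k$-defined locally closed subschemes on which $\bmu(-,\mu)$ is constant; say $y \in Z_{i_0}(K')$. The intersection $C := (\PGL_{m+1,K} \cdot z) \cap Z_{i_0,K}$ is then a $K$-defined constructible subset of $Z_K$ containing $y$ as a $K'$-point. When $K$ is algebraically closed, $C(K)$ is nonempty, so there exists $g_0 \in \PGL_{m+1}(K)$ with $y_0 := g_0 \cdot z \in C(K)$; the action of $\mu$ on $y_0$ produces a special test configuration $(\widetilde\cX,\widetilde\cD)$ of $(X,D)$ over $K$ with $\bmu(\widetilde\cX,\widetilde\cD) = \bmu(y_0,\mu) = \bmu(y,\mu) = \bmu(\cX',\cD')$, as required. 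For general $K$, the same argument over $\bar K$ produces such a configuration over $\bar K$, and I would then apply Galois descent, using the uniqueness of minimizers from Theorem \ref{t:HNfilts}.2 together with its equivariant form Corollary \ref{c:equiv}: Galois conjugates of a minimizer are again minimizers, hence canonically isomorphic to it, yielding descent data down to $K$.

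The main obstacle is this final Galois-descent step, required only when $K$ is not algebraically closed: promoting ``Galois conjugates of the minimizer are all isomorphic'' to genuine descent data for families over $\A^1$ demands a careful handling of the rescaling ambiguity inherent in Theorem \ref{t:HNfilts}.2 (primitivity of the minimizer suffices to eliminate this). The rest of the argument is a direct application of the constructibility framework developed in \S\ref{s:existance}.
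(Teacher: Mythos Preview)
Your argument is essentially correct but takes a different route from the paper's. The paper does not use the parameter-space constructibility of \S\ref{s:existance} at all here; instead it proves a descent lemma (Lemma \ref{l:K'/K}) directly at the level of filtrations: the minimizing test configuration $(\cX',\cD')$ is $\Gal(K'/K)$-equivariant by Corollary \ref{c:equiv}, hence the associated $\Z$-filtration $\cF'^\bullet$ of $R(X',L')=R(X,L)\otimes K'$ is $\Gal(K'/K)$-invariant, and a Galois-invariant filtration by subspaces descends to a filtration of $R(X,L)$ (citing \cite{Lan75}). This gives a test configuration over $K$ with the same $\bmu$-value in one step, for arbitrary $K$, with no case distinction between $K$ algebraically closed and not.

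Your constructibility argument for the case $K=\overline K$ is valid and gives an alternative, more geometric, proof there. One minor correction: ``rescaling'' $(\cX',\cD')$ does not make $-r(K_{\cX'/\A^1}+\cD')$ very ample; what you actually need is that $\delta(\cX'_0,\cD'_0)=\delta(X',D')\geq\epsilon$ by Theorem \ref{t:BLZinf}.2, so the central fiber lies in $Z$ once $r=r(h,c,\epsilon)$ is chosen with $\epsilon\leq\delta(X',D')$. For general $K$, your plan to first pass to $\overline K$ and then Galois-descend the test configuration itself does work, because an isomorphism of test configurations is determined on the generic fiber and hence unique (so the cocycle condition is automatic once primitivity kills the scaling ambiguity), and effectivity follows from the relative ample polarization. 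But this is visibly more laborious than the paper's approach: descending the \emph{filtration} rather than the scheme avoids all discussion of cocycles, since subspaces have no nontrivial automorphisms. The trade-off is that your approach reuses the machinery of \S\ref{s:existance}, whereas the paper's is more self-contained but requires introducing the filtration correspondence of \cite{BHJ17}.
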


The proposition is a consequence of the following lemma and \cite{Zhu20}.

\begin{lem}\label{l:K'/K}
Assume $(X',D')$ is K-unstable. 
If a special test configuration $(\cX',\cD')$ computes  $M^{\bmu}(X',D')$, then it descends to  a special test configuration of $(X,D)$.
\end{lem}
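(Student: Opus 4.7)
The strategy is faithfully flat descent along $\Spec K' \to \Spec K$, using the uniqueness in Theorem \ref{t:HNfilts}.2 to produce the descent datum. Let $R := K' \otimes_K K'$ and $p_1, p_2 : \Spec R \to \Spec K'$ the two projections. Because $(X,D)$ is defined over $K$, both pullbacks $p_i^*(\cX', \cD')$ are test configurations of the same log Fano pair $(X, D)_R$ over $\A^1_R$, and I would construct an isomorphism $\phi : p_1^*(\cX', \cD') \xrightarrow{\sim} p_2^*(\cX', \cD')$ satisfying the cocycle condition on $\Spec(K' \otimes_K K' \otimes_K K')$. Faithfully flat descent would then yield a test configuration $(\cX, \cD)$ of $(X,D)$ over $K$ whose base change to $K'$ recovers $(\cX', \cD')$, using that $(\cX', \cD')$ is quasi-projective, polarized by $-r(K_{\cX'/\A^1}+\cD')$ for $r$ sufficiently divisible.

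To produce $\phi$, I would work fiber by fiber. For each geometric point $\bar\kappa$ of $\Spec R$, the two resulting embeddings $K' \hookrightarrow \bar\kappa$ give two special test configurations of $(X, D)_{\bar\kappa}$. By Lemma \ref{l:futnormfiberwise} applied to the $\G_m$-action on the central fiber, both have the same $\Fut$, $\mnorm{\cdot}$, and $\lnorm{\cdot}$ as $(\cX', \cD')$. Invoking the base-change invariance of $M^{\bmu}$ under algebraically closed field extensions from \cite{Zhu20}, both compute $M^{\bmu}((X, D)_{\bar\kappa})$, so Theorem \ref{t:HNfilts}.2 produces an isomorphism between them; the constant value of $\mnorm{\cdot}$ on fibers rules out any nontrivial rescaling, and compatibility with the $\G_m$-equivariance and the identification of the fiber over $1 \in \A^1$ with $(X,D)_{\bar\kappa}$ pins down this isomorphism uniquely. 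To globalize, I would use that, for $r$ large enough making $-r(K_{\cX'/\A^1}+\cD')$ very ample, the Isom functor of rigidified polarized test configurations is representable by a finite-type scheme over $R$ (via the Hilbert-scheme setup of Section \ref{ss:paramtest}); fiberwise existence then yields an $R$-section, producing $\phi$. The cocycle condition over $\Spec(K' \otimes_K K' \otimes_K K')$ follows from fiberwise uniqueness.

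The main obstacle is the rigidification needed to make the fiberwise isomorphism unique, since Theorem \ref{t:HNfilts}.2 only gives uniqueness up to rescaling. This is overcome by observing that the scaling factor is pinned by $\mnorm{\cdot}$, which is constant by Lemma \ref{l:futnormfiberwise}, and that requiring compatibility with the identification at $t = 1$ makes any $\G_m$-equivariant isomorphism over $\A^1$ unique (since the $\G_m$-orbit of $1$ is dense and the test configurations are separated). A secondary technicality is the uniform representability of the Isom functor across $\Spec R$, which is provided by the boundedness from Theorem \ref{t:bounded} together with the Hilbert scheme construction already developed in Section \ref{ss:paramtest}.
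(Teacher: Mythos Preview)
Your approach via fpqc descent has the right underlying idea---use uniqueness of the $\bmu$-minimizer to manufacture descent data---but the execution has two genuine gaps that the paper's argument avoids.

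First, there is a circularity in your appeal to Theorem~\ref{t:HNfilts}.2 over $\bar\kappa$. You need both pullbacks $p_i^*(\cX',\cD')_{\bar\kappa}$ to compute $M^{\bmu}((X,D)_{\bar\kappa})$, and for this you invoke ``base-change invariance of $M^{\bmu}$ under algebraically closed field extensions from \cite{Zhu20}.'' But \cite{Zhu20} gives invariance of $\delta$, hence only of $M_1^{\bmu}$; the full invariance of $M^{\bmu}$ is Proposition~\ref{p:fieldextend}, whose proof uses the very lemma you are proving. (The circularity can be broken by a separate spreading-out argument for the extension $K'\hookrightarrow\bar\kappa$ of algebraically closed fields, but you do not supply one.)

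Second, the step ``fiberwise existence then yields an $R$-section'' is unjustified. The ring $R=K'\otimes_K K'$ is typically non-Noetherian, and even granting that the rigidified Isom functor is representable with a unique point in every geometric fiber, this does not by itself produce a section over $\Spec R$; a monomorphism locally of finite presentation that is bijective on geometric points need not be an isomorphism.

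The paper sidesteps both problems by staying over $K'$. Instead of comparing two pullbacks over $\Spec(K'\otimes_K K')$, it compares $(\cX',\cD')$ with its translate under each $\sigma\in\Gal(K'/K)$: both are test configurations of $(X',D')$ defined over $K'$ itself, both visibly compute $M^{\bmu}(X',D')$ (no further field extension needed), so the proof of Corollary~\ref{c:equiv} gives $(\cX',\cD')\cong\sigma(\cX',\cD')$. This makes the associated filtration $\cF'^\bullet$ of $R(X',L')=R(X,L)\otimes_K K'$ invariant under $\Gal(K'/K)$, and descent of Galois-invariant vector subspaces is elementary linear algebra (the paper cites \cite{Lan75}). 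The passage to filtrations thus converts a delicate scheme-theoretic descent into a trivial one.
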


\begin{proof}
The proof uses the correspondence  between test configurations and finitely generated filtrations of the section ring \cite[Section 2.5]{BHJ17}, which also holds when the base field is not algebraically closed.
Fix  $r>0$ so that $L:= -r(K_X+D)$ and $L':= -r(K_{X'}+D')$ are Cartier divisors, 
and write $R(X,L)$ and $ R(X',L')$ for the section rings of $L$ and $L'$. Note that $R(X,L)\otimes K'\simeq R(X',L')$. 

The  test configuration $(\cX',\cD')$ induces a finitely generated $\Z$-filtration $\cF'^\bullet$ of $R(X',L')$.
Since $(X',D')$ is K-unstable and $(\cX',\cD')$ computes $M^{\bmu}(X',D')$, 
the proof of Corollary \ref{c:equiv} implies $(\cX',\cD')$ is invariant with respect to ${\rm Gal}(K'/K)$. Therefore, the filtration $\cF'^{\bullet}$ is invariant with respect to the action of $\Gal(K'/K)$ on $R(X',L')$. 
The latter implies there exists a finitely generated $\Z$-filtration $\cF^{\bullet}$ of $R(X,L)$ so that  $\cF'^{\bullet} \simeq \cF^\bullet \otimes K'$ (see e.g. the proof of \cite[Proposition 3]{Lan75}). Hence,  $\cF^{\bullet}$ induces a special test configuration $(\cX,\cD)$ of $(X,D)$ so that $(\cX',\cD')\simeq (\cX,\cD) \times \Spec(K)$.
\end{proof}

\begin{proof}[Proof of Proposition \ref{p:fieldextend}]
If $(X',D')$ is K-semistable, then $(X,D)$  is K-semistable by \eqref{eq:K'/K}.
Hence, $M^{\bmu}(X,D) = {\bf 0} = M^{\bmu}(X,D)$.

Now assume $(X',D')$ is K-unstable.
By  \cite[Theorem 1.2]{Zhu20},  $\delta(X',D') = \delta(X,D)$.
Using Theorem \ref{t:BLZinf}.1, we see  $M_1^{\bmu}(X',D') = M_1^{\bmu}(X,D)$.
If $M_2^{\bmu}(X',D')=+\infty $, then 
$M_2^{\bmu}(X,D) = +\infty $ as well by \eqref{eq:K'/K}. 
If $M_2^{\bmu}(X',D')<+\infty$, then Proposition \ref{c:tfae}
guarantees  the existence of a special test configuration $(\cX',\cD')$ computing $M^{\bmu}(X',D')$. 
By Lemma \ref{l:K'/K}, $(\cX',\cD')$ descends to a special test configuration $(\cX,\cD)$ of $(X,D)$. 
Since
\[
M^{\bmu}(X',D') = \bmu(\cX',\cD')= \bmu(\cX,\cD) \geq M^{\bmu}(X,D)
\]
and 
 $M^{\bmu}(X',D') \leq M^{\bmu}(X,D)$ by \eqref{eq:K'/K}, we conclude 
 $M_2^{\bmu} (X',D') =M_2^{\bmu}(X,D)$.
 \end{proof}

\section{Behaviour in families}\label{s:lsc}

In this section we describe the behaviour $M^{\bmu}$ under specialization.

\subsection{Semicontinuity under specialization}
Let $R$ be a DVR essentially of finite type over $k$. Write $K$ and $\kappa$ for the fraction field and residue field of $R$. 
 
\begin{prop}\label{p:lscDVR}
If $(X,D)\to \Spec(R)$ is a family of log Fano pairs, then
\[
M^{\bmu}(X_{K},D_{K}) \geq M^{\bmu}(X_{\kappa}, D_{\kappa}). 
\]
\end{prop}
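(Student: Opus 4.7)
The plan is to split into three cases according to whether $(X_K, D_K)$ is K-semistable, K-unstable with $M_2^{\bmu}(X_K, D_K) = +\infty$, or K-unstable with $M^{\bmu}(X_K, D_K)$ attained.

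In the K-semistable case, $M^{\bmu}(X_K, D_K) = \mathbf{0}$ and the conclusion is automatic since the value of $M^{\bmu}$ for any log Fano pair is $\leq \mathbf{0}$ in the lex order. In the K-unstable case with $M_2^{\bmu}(X_K, D_K) = +\infty$, Proposition \ref{c:tfae} says that no special test configuration attains $M_1^{\bmu}(X_K, D_K)$. Here I would invoke the lower semicontinuity of the stability threshold in families of log Fano pairs to conclude $\delta(X_\kappa, D_\kappa) \leq \delta(X_K, D_K)$ (which also forces $(X_\kappa, D_\kappa)$ to be K-unstable); combined with \eqref{e:M_1}, this gives $M_1^{\bmu}(X_\kappa, D_\kappa) \leq M_1^{\bmu}(X_K, D_K)$, which suffices in the lex order because $+\infty$ is maximal in the second coordinate.

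The remaining case is when $(X_K, D_K)$ is K-unstable and the infimum is attained by a special test configuration $(\cX_K, \cD_K)$. The strategy is to extend $(\cX_K, \cD_K)$ to a relative special test configuration over $\Spec R$ and appeal to Lemma \ref{l:futnormfiberwise}. Concretely, fix $r$ so that $L := -r(K_{X/R} + D)$ is relatively very ample, giving an embedding $X \hookrightarrow \PP^m_R$ with $m = h(r) - 1$; the test configuration $(\cX_K, \cD_K)$ then arises, as in Section \ref{ss:1psPGL}, from a 1-PS $\lambda_K : \G_{m,K} \to \PGL_{m+1,K}$. After replacing $R$ by a finite extension and conjugating by an element of $\PGL(R)$, one may arrange that $\lambda_K$ is the base change of a 1-PS $\lambda : \G_{m,R} \to \T_R$ valued in a fixed split maximal torus $\T \subset \PGL_{m+1}$. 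Applying $\lambda$ to the embedded family yields a flat $\G_m$-equivariant family $(\cX_R, \cD_R) \to \A^1_R$ whose generic fiber recovers $(\cX_K, \cD_K)$. Once one knows $(\cX_R, \cD_R)$ is a family of \emph{special} test configurations, that is, its central fiber $(\cX_R)_0 \to \Spec R$ is a family of log Fano pairs carrying a fiberwise $\G_m$-action, Lemma \ref{l:futnormfiberwise} implies $\bmu(\cX_\kappa, \cD_\kappa) = \bmu(\cX_K, \cD_K) = M^{\bmu}(X_K, D_K)$, and hence $M^{\bmu}(X_\kappa, D_\kappa) \leq M^{\bmu}(X_K, D_K)$.

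The main obstacle is precisely this last verification: while the generic central fiber $(\cX_K)_0$ is log Fano by hypothesis and the Mumford divisor structure propagates, klt-ness of the special central fiber $(\cX_\kappa)_0$ does not follow automatically from the naive specialization of the $\lambda$-orbit on the embedded family. I would attempt to resolve this either by applying the $\G_m$-equivariant relative MMP of \cite{LX14} over $\Spec R$ to replace $(\cX_R, \cD_R)$ by a relative special test configuration without changing its generic fiber and without increasing $\bmu$, or by working directly in the parameter space $Z$ from Section \ref{ss:paramtest}, where constructibility of $Z_{\geq \mathbf{m}}$ (Proposition \ref{p:descending}) combined with Lemma \ref{l:futnormfiberwise} should reduce the claim to showing stability of $Z_{\geq \mathbf{m}}$ under specialization.
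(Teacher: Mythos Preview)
Your case split and the first two cases are fine and match the paper's argument. The gap is in case 3, and it is precisely the one you flag but do not close. Neither of your proposed fixes works as stated.

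The naive orbit closure under a 1-PS in $\PP^m_R$ almost never yields a special test configuration over the residue field: the limit over $\kappa$ need not be normal, let alone klt and Fano. Your fix (a), running a $\G_m$-equivariant MMP over $\Spec(R)$ \`a la \cite{LX14}, is the right mechanism, but the assertion ``without changing its generic fiber'' is exactly where the content lies. The relevant result is \cite[Theorem 5.2]{BLZ19}, recorded in the paper as Proposition~\ref{t:BLZTheta}: the extension to a $\G_m$-equivariant family of log Fano pairs over $\A^1_R$ with unchanged generic fiber exists \emph{provided} $\mu_1(\cX_K,\cD_K) \leq M_1^{\bmu}(X_\kappa,D_\kappa)$. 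Without this hypothesis the MMP may well modify $(\cX_K,\cD_K)$. Your fix (b) is circular: Proposition~\ref{p:descending} gives constructibility of $Z_{\geq \mathbf m}$, and combining constructibility with stability under specialization to conclude openness is exactly the content of Proposition~\ref{p:lscDVR} itself.

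The missing step is therefore to first invoke lower semicontinuity of $\delta$ (as you already do in case 2) to get $M_1^{\bmu}(X_\kappa,D_\kappa)\leq M_1^{\bmu}(X_K,D_K)$ in case 3 as well. If the inequality is strict, you are done by the lexicographic order with no extension needed. If equality holds, then since $(\cX_K,\cD_K)$ computes $M^{\bmu}(X_K,D_K)$ you have $\mu_1(\cX_K,\cD_K)=M_1^{\bmu}(X_K,D_K)=M_1^{\bmu}(X_\kappa,D_\kappa)$, so the hypothesis of Proposition~\ref{t:BLZTheta} is satisfied and the extension exists; Lemma~\ref{l:futnormfiberwise} then finishes the argument exactly as you describe. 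This is precisely how the paper proceeds.
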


The statement follows from combining the lower semi-continuity of the $\delta$-invariant \cite{BLX19}
with the following result from \cite{BLZ19} on extending  test configurations.

\begin{prop}\label{t:BLZTheta}
Let $(X,D)\to \Spec(R)$ be a family of log Fano pairs. If $(\cX_K,\cD_K)$ is a test configuration of $(X_K,D_K)$ such that 
\[ 
\mu_1(\cX_K,\cD_K) \leq M_{1}^{\bmu}(X_\kappa,D_\kappa) \quad \text{ and } 
\quad 
\mu_1(\cX_K,\cD_K)\leq  0,\]
then the test configuration extends to a $\G_m$-equivariant family of a log Fano pairs $(\cX,\cD)\to \A^1_{R}$ with the data of an isomorphism $(\cX,\cD)_{1} \simeq (X,D)$ over $\Spec(R)$.
\end{prop}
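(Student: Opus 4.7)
The plan is to extend the test configuration using the correspondence between test configurations and finitely generated filtrations of the section ring, followed by a relative MMP over $\A^1_R$ to ensure the output has log Fano fibers. The hypothesis on $\mu_1$ will be crucial at the very end, to force the central fiber over the closed point of $\Spec(R)$ to remain klt.

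First I would set up the algebraic framework. Fix $r>0$ so that $\cL := -r(K_{X/R}+D)$ is a relatively very ample Cartier divisor, and consider the flat graded $R$-algebra
\[
\cR := \bigoplus_{m \geq 0} f_* \cO_X(m\cL),
\]
whose generic fiber is the section ring $\cR_K \simeq R(X_K, \cL_K)$. The test configuration $(\cX_K, \cD_K)$ corresponds to a finitely generated multiplicative decreasing $\Z$-filtration $\cF_K^\bullet$ on $\cR_K$ (see \cite[Section 2.5]{BHJ17}). Define an extension to $\cR$ by intersection, $\cF^p \cR_m := \cF_K^p \cR_{K,m} \cap \cR_m$. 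Since $R$ is a DVR and $\cF_K^\bullet$ is finitely generated, $\cF^\bullet$ is a finitely generated $R$-filtration whose reduction modulo the uniformizer gives a filtration on the central-fiber section ring $R(X_\kappa, \cL_\kappa)$.

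Next, I would form the associated Rees algebra $\bigoplus_{m, p} \cF^p\cR_m \, t^{-p}$ as a $\G_m$-equivariant graded $R[t]$-algebra, and let $\tilde{\cX} \to \A^1_R$ be its relative Proj, with $\tilde{\cD}$ the natural extension of $\cD_K$ as a K-flat Mumford divisor (available via \cite{Kol19}). By construction $\tilde{\cX}|_{\A^1_K} \simeq \cX_K$ and $\tilde{\cX}|_{(\A^1_R \setminus 0)} \simeq X_R \times (\A^1_R \setminus 0)$, and $\tilde{\cX}$ is flat over $\A^1_R$. The only fiber that might fail to be a log Fano pair is the one over $(0, \kappa)$, which is  the special fiber of a test configuration for $(X_\kappa, D_\kappa)$ induced by the restricted filtration.

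Then I would run a relative MMP over $\A^1_R$ on an appropriate twist of $K_{\tilde\cX/\A^1_R} + \tilde\cD$ to contract any non-klt or non-relatively-ample locus concentrated over the closed point of $\Spec(R)$, following the approach of \cite{LX14}, applied here in a relative setting over $R$. BCHM-type finite generation (combined with the fact that $-K_{\tilde\cX/\A^1_R}-\tilde\cD$ is relatively big) gives termination and an anti-ample model $(\cX, \cD) \to \A^1_R$ with $K_{\cX/\A^1_R}+\cD$ relatively $\Q$-Cartier. Away from the closed point of $\Spec(R)$ this MMP is an isomorphism, so the restriction to $\A^1_K$ is still $(\cX_K, \cD_K)$.

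The main obstacle, and the step where the hypotheses $\mu_1(\cX_K,\cD_K)\leq M_1^{\bmu}(X_\kappa,D_\kappa)$ and $\mu_1(\cX_K,\cD_K)\leq 0$ enter, is showing that the fiber of $(\cX, \cD)$ over $(0, \kappa)\in \A^1_R$ is a log Fano pair, i.e.\ that the MMP terminates with klt singularities on this fiber. If it failed, the restriction of $(\cX,\cD)$ to $\A^1_\kappa$ would still yield a non-trivial $\G_m$-equivariant degeneration of $(X_\kappa, D_\kappa)$ whose invariants $\Fut$ and $\mnorm{\,\cdot\,}$ are inherited from $(\cX_K, \cD_K)$ by Lemma~\ref{l:futnormfiberwise} applied to the $\A^1_R \setminus 0$ locus together with lower semicontinuity. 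The condition $\mu_1(\cX_K,\cD_K)\leq \delta(X_\kappa,D_\kappa)-1$ combined with Theorem~\ref{t:BLZinf} and \cite[Theorem~1.1]{BLX19} (lower semicontinuity of $\delta$) rules out such a failure, because any valuation extracted from a non-klt degeneration would produce a test configuration of $(X_\kappa,D_\kappa)$ violating the definition of $\delta(X_\kappa,D_\kappa)$. The condition $\mu_1\leq 0$ guarantees the extended family is nontrivial. This yields the desired special-test-configuration extension over $\A^1_R$.
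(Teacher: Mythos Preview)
The paper does not prove this proposition directly: it simply records that the case where $R$ is the germ of a smooth curve is \cite[Theorem 5.2]{BLZ19} with twist parameter $\mu := \mu_1(\cX_K,\cD_K)$, and observes that the argument there extends verbatim to DVRs essentially of finite type over $k$. So your write-up is really an attempt to unpack that cited argument, and the broad architecture you give (extend the test configuration over $\A^1_R$, then run an LX14-type relative MMP to force the $\kappa$-fiber to be special) is indeed the shape of the proof in \cite{BLZ19}.

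The gap is in your last paragraph, where the hypotheses are supposed to do the work. Your sentence ``any valuation extracted from a non-klt degeneration would produce a test configuration of $(X_\kappa,D_\kappa)$ violating the definition of $\delta(X_\kappa,D_\kappa)$'' is not an argument: an lc place on the degenerated fiber $\cX_{(0,\kappa)}$ is a valuation on that variety, not on $X_\kappa$, and there is no direct way to read off from it a test configuration of $(X_\kappa,D_\kappa)$ with $\mu_1$ below $\delta(X_\kappa,D_\kappa)-1$. Likewise, invoking Lemma~\ref{l:futnormfiberwise} ``together with lower semicontinuity'' on $\A^1_R\setminus 0$ does not tell you anything about the possibly non--log-Fano fiber at $(0,\kappa)$. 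What actually happens in \cite{BLZ19} (following \cite{LX14}) is that the relative MMP is run in a specific way---lc modification followed by a $-(K+\cD)$-MMP---and one uses the monotonicity of the \emph{twisted} Futaki invariant $\Fut_{1-\beta}$ (with $\beta=\delta(X_\kappa,D_\kappa)$) under each step of this process. The hypothesis $\mu_1(\cX_K,\cD_K)\le M_1^{\bmu}(X_\kappa,D_\kappa)$ translates exactly to $\Fut_{1-\beta}(\cX_K,\cD_K)\le 0$, while $(X_\kappa,D_\kappa)$ is $\beta$-twisted K-semistable; these two facts together force the LX14 procedure over $\kappa$ to terminate with a genuine special test configuration rather than a non-klt model. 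Finally, your claim that ``$\mu_1\le 0$ guarantees the extended family is nontrivial'' misidentifies the role of that hypothesis: nontriviality is already given by $(\cX_K,\cD_K)$ being nontrivial; the condition $\mu_1\le 0$ is what places the twist parameter in the range $[0,1)$ where the twisted LX14 machinery of \cite{BLZ19} applies.
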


\begin{proof}
When $R$ is the germ of a smooth curve, this  is \cite[Theorem 5.2]{BLZ19} with $\mu:= \mu_1(\cX_K,\cD_K)$. The argument in \emph{loc. cit.} extends verbatim to the case when $R$ is a DVR essentially of finite type over $k$.
\end{proof}

\begin{proof}[Proof of Proposition \ref{p:lscDVR}]
If $(X_K,D_K)$ is K-semistable, then the statement holds trivially.
Now, assume  $(X_K,D_K)$ is K-unstable.
By \cite[Corollary 1.2]{BLX19},  $(X_\kappa,D_\kappa)$ is also K-unstable.
Using Theorem \ref{t:BLZinf} and Proposition \ref{p:fieldextend}, we see
\[
M_1^{\bmu}(X_{K}, D_{{K}})= \delta(X_{\overline{K}}, D_{\overline{K}}) -1 
\quad \text{ and } \quad 
M_1^{\bmu}(X_{{\kappa }}, D_{{\kappa }})= \delta(X_{\overline{\kappa }}, D_{\overline{\kappa }})-1.\]
Since $\delta$ is lower semicontinuous along geometric fibers \cite{BL18}, 
$M_1^{\bmu}(X_{K}, D_{{K}})\geq
 M_1^{\bmu}(X_{{\kappa }}, D_{{\kappa }})$.
 If the inequality  is strict, then $M^{\bmu}(X_K,D_K)> M^{\bmu}(X_\kappa, D_\kappa)$. 
 
We are left to consider the case when $M_1^{\bmu}(X_{K}, D_{{K}})=
 M_1^{\bmu}(X_{{\kappa }}, D_{{\kappa }})$. 
 First assume there exits a test configuration $(\cX_K,\cD_K)$ of $(X_K,D_K)$
such that
 $ M^{\bmu}(X_K,D_K)=\bmu( \cX_K,\cD_K) $.
 By Proposition \ref{t:BLZTheta},  the test configuration extends to a $\G_m$-equivariant family of a log Fano pairs $(\cX,\cD)\to \A^1_{R}$ with the data of an isomorphism $(\cX,\cD)_{1} \simeq (X,D)$ over $\Spec(R)$. Therefore,
 \[
M^{\bmu}(X_K,D_K) = \bmu  (\cX_K,\cD_K)  = \bmu(\cX_\kappa, \cD_\kappa) \geq 
M^{\bmu}(X_\kappa ,D_\kappa),
 \]
 where the second equality is Lemma \ref{l:futnormfiberwise}.
  Next, assume $M^{\bmu}(X_{K},D_{K})$ is not achieved by a test configuration. By Proposition \ref{c:tfae}, $M_2^{\bmu}(X_K,D_K)=+\infty$. 
  Therefore,  $M^{\bmu}(X_{K},D_{K})\geq M^{\bmu}(X_\kappa,D_\kappa)$ also holds.
\end{proof}

\subsection{Specialization of minimizers}

\begin{prop}\label{p:minimum-same}
If $(X,D)$ is a log Fano pair that is K-unstable and $(\cX,\cD)$ is a special test configuration of $(X,D)$ computing $M^{\bmu}(X,D)$, then 
\[M^{\bmu}(X,D)= M^{\bmu}(\cX_0,\cD_0).\]
\end{prop}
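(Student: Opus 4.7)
The plan is to prove the two inequalities $M^{\bmu}(\cX_0,\cD_0)\leq M^{\bmu}(X,D)$ and $M^{\bmu}(\cX_0,\cD_0)\geq M^{\bmu}(X,D)$ separately. I first observe that since $(\cX,\cD)$ minimizes $\bmu$, it in particular minimizes $\mu_1$, so Theorem \ref{t:BLZinf}(2) gives $\delta(\cX_0,\cD_0)=\delta(X,D)<1$. Thus $(\cX_0,\cD_0)$ is K-unstable and $M^{\bmu}(\cX_0,\cD_0)$ is a genuine infimum over non-trivial special test configurations.

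For the upper bound I use a product test configuration. Let $\xi$ denote the induced $\G_m$-action on $(\cX_0,\cD_0)$. Since $(\cX,\cD)$ is non-trivial, $\mnorm{\cX,\cD}>0$, so $\xi$ is non-trivial. The product $\mathcal{Y}:=(\cX_0,\cD_0)\times\A^1$ with the $\G_m$-action $t\cdot(x,s)=(\xi(t)\cdot x,ts)$ is therefore a non-trivial special test configuration of $(\cX_0,\cD_0)$, and its central fiber is precisely $(\cX_0,\cD_0;\xi)$. By Lemma \ref{l:futnormfiberwise}, $\Fut$, $\mnorm{\,\,}$, and $\lnorm{\,\,}$ are fiberwise invariants of the $\G_m$-action on the central fiber, so they coincide for $\mathcal{Y}$ and $(\cX,\cD)$. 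Hence $\bmu(\mathcal{Y})=\bmu(\cX,\cD)=M^{\bmu}(X,D)$, giving $M^{\bmu}(\cX_0,\cD_0)\leq M^{\bmu}(X,D)$.

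For the lower bound I combine specialization semicontinuity with field-invariance. Set $R:=k[t]_{(t)}$, a DVR essentially of finite type over $k$ with fraction field $K:=k(t)$ and residue field $k$, and pull back $(\cX,\cD)\to\A^1$ along $\Spec R\to\A^1$ to obtain a family $(\cX_R,\cD_R)\to\Spec R$ of log Fano pairs. Its special fiber is $(\cX_0,\cD_0)$, and because the $\G_m$-action trivializes $(\cX,\cD)$ over $\A^1\setminus\{0\}$ (via the isomorphism $(\cX,\cD)_1\simeq(X,D)$), its generic fiber is $(X_K,D_K)$. Proposition \ref{p:lscDVR} yields $M^{\bmu}(X_K,D_K)\geq M^{\bmu}(\cX_0,\cD_0)$. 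Applying Proposition \ref{p:fieldextend} to the extensions $k\hookrightarrow\overline{K}$ and $K\hookrightarrow\overline{K}$, I get $M^{\bmu}(X,D)=M^{\bmu}(X_{\overline{K}},D_{\overline{K}})=M^{\bmu}(X_K,D_K)$. Chaining these equalities and inequalities gives $M^{\bmu}(X,D)\geq M^{\bmu}(\cX_0,\cD_0)$.

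There is no serious obstacle, as the heavy lifting is done by Theorem \ref{t:BLZinf}(2), Proposition \ref{p:lscDVR}, and Proposition \ref{p:fieldextend}. The only points requiring care are (i) choosing a DVR that is essentially of finite type over $k$ so that Proposition \ref{p:lscDVR} applies, and (ii) using the canonical $\G_m$-equivariant trivialization of a special test configuration over $\A^1\setminus\{0\}$ to identify the generic fiber as $(X_K,D_K)$.
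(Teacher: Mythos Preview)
Your proof is correct, and the lower-bound argument takes a genuinely different route from the paper's. Both proofs handle the inequality $M^{\bmu}(\cX_0,\cD_0)\leq M^{\bmu}(X,D)$ the same way, via the product test configuration induced by $\xi$. For the reverse inequality, the paper argues by contradiction: assuming $M^{\bmu}(\cX_0,\cD_0)<M^{\bmu}(X,D)$, it first shows $M^{\bmu}(\cX_0,\cD_0)$ is computed by some $(\cX',\cD')$, then uses equivariance (Corollary~\ref{c:equiv}) and the construction from \cite[Lemma 3.1]{LWX18} to build, for $k\gg0$, a test configuration $(\cX'',\cD'')$ of $(X,D)$ whose central fiber carries the action $k\xi+\xi'$, and finally applies the strict quasi-convexity of $\bmu$ (Proposition~\ref{p:quasiconvex}) to obtain $\bmu(\cX'',\cD'')<M^{\bmu}(X,D)$, a contradiction. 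Your approach instead views $(\cX,\cD)\to\A^1$ as a specialization over a DVR and invokes Proposition~\ref{p:lscDVR} together with Proposition~\ref{p:fieldextend}. This is shorter and more conceptual, avoiding the external construction from \cite{LWX18} and the quasi-convexity lemma; the price is that it leans on the heavier machinery assembled in Propositions~\ref{p:lscDVR} and~\ref{p:fieldextend} (which themselves rely on Theorem~\ref{t:BLZinf}, Proposition~\ref{t:BLZTheta}, and ultimately the uniqueness result via Corollary~\ref{c:equiv}). The paper's argument is more self-contained and makes the role of convexity explicit.
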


The proof is similar to that of \cite[Lemma 3.1]{LWX18}.

\begin{proof}
Let $\xi$ denote the $\G_m$-action on $(\cX_0,\cD_0)$
and obverse that 
\[M^{\bmu}(X,D) =\bmu(\cX,\cD)= \bmu(\cX_0,\cD_0; \xi) \geq M^{\bmu}(\cX_0,\cD_0).
\]
To prove the reverse inequality,
 suppose for sake of contradiction that $M^{\bmu}(X,D) >M^{\bmu}(\cX_0,\cD_0)$. 

Observe that
$M^{\bmu}(\cX_0,\cD_0)$ is computed. 
Indeed, 
since
$M_1^{\bmu}(X,D) = M_1^{\bmu}(\cX_0,\cD_0)$
(Theorem \ref{t:BLZinf})
and $ M_1^{\bmu}(X,D) =\mu_1(\cX_0,\cD_0 ;\xi)$, 
$M_1^{\bmu}(\cX_0,\cD_0)$ is computed by the product test configuration of $(\cX_0,\cD_0)$ induced by $\xi$.
Therefore,  Proposition \ref{c:tfae} implies the existence  of a test configuration $(\cX',\cD')$ computing $M^{\bmu}(\cX_0,\cD_0)$.

Let $\xi'$ denote the $\G_m$-action on $(\cX'_0,\cD'_0)$.
Since $(\cX',\cD')$ is equivariant 
 with respect to the $\G_m$-action $\xi$  on $(\cX_0,\cD_0)$ (Corollary \ref{c:equiv}), $\xi$ induces a $\G_m$-action on $(\cX'_0,\cD'_0)$ commuting with $\xi'$  and we somewhat abusively denote it by $\xi$.
 
By the proof of \cite[Lemma 3.1]{LWX18}, for $k\gg0$ , there exists a test configuration $(\cX'',\cD'')$ of $(X,D)$ such that $(\cX''_0,\cD''_0)$ is isomorphic to $(\cX'_0,\cD'_0)$ and has $\G_m$-action given by $k\xi+\xi'$. 
To compute $\bmu(\cX'',\cD'')$, 
first note that 
\[
\bmu(\cX'_0,\cD'_0 ; \xi ) = \bmu(\cX_0,\cD_0; \xi) = M^{\bmu}(X,D) 
>
M^{\bmu}(\cX_0,\cD_0) =\bmu(\cX'_0,\cD'_0 ; \xi' )
,\]
where the first equality is by Lemma \ref{l:futnormfiberwise}.
By Proposition \ref{p:quasiconvex}, we see \[
\bmu(\cX'_0,\cD'_0; k \xi +  \xi' )
<  \max \left\{ \bmu( \cX'_0; \cD'_0 ;\xi ) , 
\bmu( \cX'_0; \cD'_0 ;  \xi'  ) \right\} = M^{\bmu}(X,D).\]
Since $\bmu(\cX'',\cD'') = \bmu(\cX_0,\cD_0; k\xi + \xi')
$, this gives
$\bmu(\cX'',\cD'') < M^{\bmu}(X,D)$, which is absurd.
\end{proof}

\section{Existence of $\Theta$-stratification and properness}\label{s:theta}

In this section, we will prove Theorem \ref{t-theta}, that is, the existence of $\Theta$-stratification on $\cM_{n,V,c}^{\rm Fano}$ under the assumption of Conjecture \ref{c:optdest}. By \eqref{eq:hilbert}, we know that $\cM_{n,V,c}^{\rm Fano}=\sqcup_{h}\cM_{h,c}^{\rm Fano}$ where $h$ runs over all Hilbert functions. We will construct a $\Theta$-stratification on each $\cM_{h,c}^{\rm Fano}$.

For simplicity, we denote by $\sX:=\cM_{h,c}^{\rm Fano}$. 
Let $\Gamma:=\{M^{\bmu}(X,D)\mid [(X,D)]\in |\cM_{h,c}^{\rm Fano}|\}$ be the subset of $(\R\cup\{\pm\infty\})^2$ equipped with the lexicographic order.
For each $\bfm\in \Gamma$, we define  the subfunctor $\sX_{\geq \bfm}$ of $\sX$ as 
\[
 \sX_{\geq \bfm}(T) = \{ [(X,D) \to T] \in \  \sX(T) \, \vert \, M^{\bmu}(X_{\overline{t}},D_{\overline{t}}) \geq \bfm \quad \text{for all }\, t\in T \}.
\]
It is clear that $\sX_{\geq\mathbf{0}}=\cM_{h,c}^{\rm Kss}$.

\begin{prop}
Assume Conjecture \ref{c:optdest} holds. 
For each $\bfm\in \Gamma$, the functor $\sX_{\geq \bfm}$ is represented by an open substack of $\sX$ of finite type.
\end{prop}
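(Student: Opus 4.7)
The plan is to reduce the statement to the parameter space $Z$ from \S\ref{ss:paramtest}, where both constructibility (Proposition \ref{p:descending}) and specialization (Proposition \ref{p:lscDVR}) are already at hand, and then invoke the standard fact that a constructible subset closed under generization is open.

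First, I would confine $\sX_{\geq \bfm}$ to a bounded substack. Write $\bfm = (m_1, m_2)$. By Theorem \ref{t:BLZinf}, $M_1^{\bmu}(X,D) = \delta(X,D) - 1 > -1$, since $\delta > 0$ by \cite[Theorem A]{BJ20}. Thus for any $\bfm \in \Gamma$ we have $m_1 > -1$, and any family $[(X,D) \to T] \in \sX_{\geq \bfm}(T)$ has $\delta(X_{\overline{t}},D_{\overline{t}}) \geq m_1+1 > 0$ for every $t \in T$. Setting $\epsilon := m_1 + 1$, this gives a factorization $\sX_{\geq \bfm} \subseteq \cM^{\delta \geq \epsilon}_{h,c}$. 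By Theorem \ref{t:bounded} and the discussion in \S\ref{ss:paramtest}, we may present $\cM^{\delta \geq \epsilon}_{h,c} \simeq [Z/\PGL]$ for a finite type $k$-scheme $Z$ carrying a universal embedded family.

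Next, I would consider the $\PGL$-invariant subset
\[
Z_{\geq \bfm} := \left\{ z = [(X,D) \hookrightarrow \PP^m ] \in Z \,\middle\vert\, M^{\bmu}(X,D) \geq \bfm \right\} \subseteq Z,
\]
which is $\PGL$-invariant because $M^{\bmu}$ depends only on the isomorphism class of $(X,D)$. Proposition \ref{p:descending} (which requires Conjecture \ref{c:optdest}) shows that $Z_{\geq \bfm}$ is constructible. On the other hand, Proposition \ref{p:lscDVR} applied to the restriction of the universal family to any trait in $Z$ shows that whenever $z_\eta \rightsquigarrow z_s$ is a specialization in $Z$, we have $M^{\bmu}(X_{z_\eta}, D_{z_\eta}) \geq M^{\bmu}(X_{z_s}, D_{z_s})$. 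In particular, $Z_{\geq \bfm}$ is closed under generization. Since $Z$ is a Noetherian scheme, a constructible subset closed under generization is open, and hence $Z_{\geq \bfm}$ is an open $\PGL$-invariant subset of $Z$.

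Finally, I would identify $\sX_{\geq \bfm}$ with the open substack $[Z_{\geq \bfm}/\PGL] \subseteq [Z/\PGL]$. Since open substacks of $\cM^{\delta \geq \epsilon}_{h,c}$ are open in $\cM^{\rm Fano}_{h,c} = \sX$ (the inclusion of $\cM^{\delta \geq \epsilon}_{h,c}$ into $\sX$ is itself open by \cite{BLX19}), this gives the desired open immersion. Finite-typeness is immediate from the fact that $Z_{\geq \bfm}$ is a finite-type scheme and $\PGL$ is a linear algebraic group. The only genuinely nontrivial input is the combination of constructibility with specialization; both are already established, so the main content of the proof is to package them correctly, with particular care that both inputs apply over the same parameter space $Z$ and respect the $\PGL$-action.
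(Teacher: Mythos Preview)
Your proof is correct and follows essentially the same route as the paper's own argument: reduce to the bounded substack $\cM^{\delta \geq \epsilon}_{h,c} \simeq [Z/\PGL]$ with $\epsilon = \bfm_1 + 1$, then combine the constructibility of $Z_{\geq \bfm}$ (Proposition \ref{p:descending}) with lower semicontinuity under specialization (Proposition \ref{p:lscDVR}) to conclude openness. The paper's version is slightly terser but uses the identical ingredients in the same order.
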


\begin{proof}
Let $\bfm:=(\bfm_1,\bfm_2)\in\Gamma$. Then we know that every log Fano pair $(X,D)$ with $M^{\bmu}(X,D)\geq \bfm$ must satisfy $\delta(X,D)-1\geq \bfm_1$. Hence, $\sX_{\geq \bfm}$ is a subfunctor of $\cM_{h,c}^{\delta\geq \bfm_1+1}$ which is a finite type open substack of $\cM_{h,c}^{\rm Fano}$ by Theorem \ref{t:bounded} and \cite[Theorem 1.1]{BLX19}. Thus, it suffices to show that $\sX_{\geq \bfm}$ is an open substack of $\cM_{h,c}^{\delta\geq \bfm_1+1}$. By Section \ref{ss:paramtest}, we know that $\cM_{h,c}^{\delta\geq \bfm_1+1}\cong [Z/\PGL_{m+1}]$ where $m=h(r)-1$ and  $Z$ is a locally closed subscheme of a  scheme $\mathbf{P}$ of finite type which parametrizes certain K-flat relative Mumford divisors over a suitable Hilbert scheme.
By constructibility and lower semicontinuity of $M^{\bmu}$ from Propositions \ref{p:descending} and \ref{p:lscDVR}, we know that the locus $Z_{\geq \bfm}:=\{[(X,D)]\in Z\mid M^{\bmu}(X,D)\geq \bfm\}$ is an open subscheme of $Z$. Hence the proof follows.
\end{proof}

Next we construct the $\Theta$-stratum $\sS_{\bfm}\subset \uMap(\Theta, \sX_{\geq \bfm})$. 
First of all, we may write 
$\sX_{\geq \bfm}=[Z_{\geq \bfm}/\PGL_{m+1}]$. Let $\T\cong\G_m^{m}$ be a maximal torus of $\PGL_{m+1}$. Denote by $N:=\Hom (\G_m, \T)$ and $M:=\Hom(\T, \G_m)$. Let $N'\subset N$ be a subset representing conjugacy classes of $1$-PS' in $\PGL_{m+1}$ (e.g. $N'$ consists of 
$1$-PS' with ascending weights under an identification $N\xrightarrow{\cong} \Z^{m}$).   Then we know that 
\[
\uMap(\Theta,\sX_{\geq \bfm})= \uMap(\Theta,[Z_{\geq \bfm}/\PGL_{m+1}])=\bigsqcup_{\lambda\in N'} [Y_\lambda/P_\lambda],
\]
where $Y_\lambda$ is the union of Bialynicki-Birula strata of $Z_{\geq \bfm}$ associated to $\lambda$, and $P_{\lambda}:= \{g\in \PGL_{m+1}\mid \lim_{t\to 0} \lambda(t)g\lambda(t)^{-1} \textrm{ exists}\}$. We know that $Y_\lambda\to Z_{\geq \bfm}$ is a locally closed immersion with image $\{z\in Z_{\geq \bfm}\mid \lim_{t\to 0} \lambda(t)\cdot z\textrm{ exists}\}$. We will often identify a point in $Y_\lambda$ with its image in $Z_{\geq \bfm}$. In particular, if $\lambda=0\in N'$ then $Y_{0}=Z_{\geq \bfm}$ and $P_{0}=\PGL_{m+1}$, i.e. $[Y_0/P_0]=\sX_{\geq \bfm}$ is the connected component of $\uMap(\Theta,\sX_{\geq \bfm})$ parametrizing trivial maps $\Theta_k\to \Spec~k\to \sX_{\geq \bfm}$.
Thus to construct the $\Theta$-stratum $\sS_{\bfm}$, it suffices to find a suitable union of  connected components of $Y_\lambda$ for each $\lambda\in N'$. 

Suppose $\bfm\neq \mathbf{0}$. For each $\lambda\in N'\setminus \{0\}$, consider the subset $S_\lambda\subset Y_{\lambda}$ as 
\[
S_{\lambda} := \{z\in Y_{\lambda}\mid \bmu(z,\lambda)=\bfm\}.
\]
For $\lambda=0\in N'$, we define $S_0:= Y_0$. We will show that $S_{\lambda}$ is a disjoint union of connected components of $Y_{\lambda}$. Indeed, by the definition of $Y_\lambda$ (see \cite[Section 1.4]{HL14}) there is a $\G_m$-equivariant map $\phi_\lambda:Y_\lambda\times\A^1\to Z_{\geq \bfm}$ where the $\G_m$-action on $Z_{\geq \bfm}$ is $\lambda$ and $\phi_{\lambda}(z,1)=z$. Thus pulling back the universal log Fano family over $Z_{\geq \bfm}$ to $Y_\lambda\times\A^1$ under $\phi_\lambda$ and applying Lemma \ref{l:futnormfiberwise}, we see that $z\mapsto \bmu(z,\lambda)$ is a locally constant function on $Y_\lambda$. Hence $S_\lambda$ is a disjoint union of connected components of $Y_\lambda$.



\begin{prop}\label{p:thetaclosed}
Assume Conjecture \ref{c:optdest} holds. 
With the above notation, for any $\bfm\neq \mathbf{0}$ and $\lambda\in N'\setminus\{0\}$ the map $\mathrm{ev}_1(\phi_\lambda):S_\lambda \to Z_{\geq \bfm}$ is a closed immersion.
\end{prop}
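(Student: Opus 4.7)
I would verify the valuative criterion for closedness. Since $S_\lambda$ is a union of connected components of the locally closed immersion $Y_\lambda \hookrightarrow Z_{\geq \bfm}$, the map $\mathrm{ev}_1(\phi_\lambda)|_{S_\lambda}$ is automatically a locally closed immersion, so only closedness of its image needs to be verified. Let $R$ be a DVR essentially of finite type over $k$ with fraction field $K$ and residue field $\kappa$, and let $z:\Spec R \to Z_{\geq \bfm}$ be a morphism whose generic point $z_K$ lies in $S_\lambda$; the task is to show $z_\kappa \in S_\lambda$.

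Writing $(X,D)\to \Spec R$ for the corresponding family of log Fano pairs, the chain $\bfm = \bmu(z_K,\lambda) \geq M^\bmu(X_K,D_K) \geq \bfm$ (the last inequality since $z_K \in Z_{\geq\bfm}$) forces $M^\bmu(X_K,D_K) = \bfm$, and Proposition \ref{p:lscDVR} together with $z_\kappa \in Z_{\geq\bfm}$ yields $M^\bmu(X_\kappa,D_\kappa) = \bfm$. In particular $(X_\kappa,D_\kappa)$ is K-unstable, and $\mu_1(\cX_K,\cD_K) = \bfm_1 = M_1^\bmu(X_\kappa,D_\kappa) < 0$, where $(\cX_K,\cD_K)$ denotes the $\lambda$-induced test configuration of $(X_K,D_K)$. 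The hypotheses of Proposition \ref{t:BLZTheta} are then satisfied, and that proposition extends $(\cX_K,\cD_K)$ to a $\G_m$-equivariant family of log Fano pairs $(\cX,\cD)\to \A^1_R$ with $(\cX,\cD)|_1 \simeq (X,D)$. By Lemma \ref{l:futnormfiberwise}, the restriction $(\cX_\kappa,\cD_\kappa)\to \A^1_\kappa$ is a special test configuration of $(X_\kappa,D_\kappa)$ satisfying $\bmu(\cX_\kappa,\cD_\kappa) = \bmu(\cX_K,\cD_K) = \bfm$, and so computes $M^\bmu(X_\kappa,D_\kappa)$.

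The main obstacle is the final identification: showing that $z_\kappa \in Y_\lambda$ with $\phi_\lambda(z_\kappa,0) = (\cX_\kappa,\cD_\kappa)_0$ inside $Z_{\geq \bfm}$. My strategy is to compare the abstract extension $(\cX,\cD)$ provided by Proposition \ref{t:BLZTheta} with the scheme-theoretic closure $\tilde\cX \subset \PP^m_{\A^1_R}$ of the $\lambda$-orbit of the embedded pair $(X,D)\hookrightarrow \PP^m_R$ (together with its induced K-flat divisor). Both $(\cX,\cD)$ and $\tilde\cX$ are flat extensions over the regular two-dimensional base $\A^1_R$ that coincide on the complement of the codimension-two point $(0,\kappa)$: on $\G_m \times \Spec R$ both give the $\lambda$-translate of $(X,D)$, and on $\A^1_K$ both restrict to $(\cX_K,\cD_K)$. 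By uniqueness of flat extensions across a codimension-two point, $\tilde\cX_{(0,\kappa)}$ must agree with the log Fano pair $(\cX_\kappa,\cD_\kappa)_0$, and by Proposition \ref{p:minimum-same} it satisfies $M^\bmu = \bfm$. Hence $\lim_{t\to 0}\lambda(t)\cdot z_\kappa$ exists in $Z_{\geq \bfm}$, so $z_\kappa \in Y_\lambda$ with $\bmu(z_\kappa,\lambda) = \bfm$, i.e.\ $z_\kappa \in S_\lambda$. The subtle point I expect to require most care is this flat-extension comparison: ensuring that $\tilde\cX$ inherits a log Fano structure at $(0,\kappa)$ (which it does via the comparison to $(\cX,\cD)$) and that the divisor extension is compatible, which may require invoking the K-flat framework of \cite{Kol19} used to define $\bP$.
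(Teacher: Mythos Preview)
Your overall strategy coincides with the paper's: reduce to the valuative criterion, invoke Proposition~\ref{t:BLZTheta} to extend the test configuration $(\cX_K,\cD_K)$ to a $\G_m$-equivariant family $(\cX,\cD)\to\A^1_R$, and use Lemma~\ref{l:futnormfiberwise} to see that $\bmu(\cX_\kappa,\cD_\kappa)=\bfm$. The divergence is only in the ``final identification'' step.

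Your proposed comparison between the abstract family $(\cX,\cD)$ and the scheme-theoretic closure $\tilde\cX\subset\PP^m_{\A^1_R}$ has a gap: you assert that $\tilde\cX$ is a flat extension over $\A^1_R$, but scheme-theoretic closure from the complement of a codimension-two point need not be flat (already for a family of points in $\PP^1$ over $\A^2$ parametrized by $[x:y]$, the closure has a whole $\PP^1$ over the origin). So ``uniqueness of flat extensions'' cannot be applied directly to identify $\tilde\cX_{(0,\kappa)}$ with $(\cX_\kappa,\cD_\kappa)_0$. You correctly flag this step as subtle, but as written the argument does not go through.

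The paper avoids this comparison by running the construction in the other direction: rather than analyzing the closure, it embeds the abstract family $(\cX,\cD)$ back into $\PP^m_{\A^1_R}$ to produce the map $\tilde f:\A^1_R\to Z_{\geq\bfm}$ with $\tilde f_t=\lambda(t)\cdot f$ for $t\in\G_m$. This is possible because every fiber of $(\cX,\cD)\to\A^1_R$---including the central fiber over $(0,\kappa)$---has $\delta\geq\bfm_1+1$ (the central fibers of $\mu_1$-minimizing test configurations inherit $\delta$ by Theorem~\ref{t:BLZinf}.2), so $-r(K_{\cX/\A^1_R}+\cD)$ is relatively very ample and the embedding over $\{1\}\times\Spec R$ extends $\G_m$-equivariantly. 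Restricting $\tilde f$ to $\A^1_\kappa$ then exhibits $\lim_{t\to 0}\lambda(t)\cdot z_\kappa$ in $Z_{\geq\bfm}$, which is exactly what you need. Your argument can be repaired along these lines; the point is to \emph{start} from the flat family $(\cX,\cD)$ and produce the embedding, rather than start from the closure and hope it is flat.
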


\begin{proof}
By definition we know that $\mathrm{ev}_1(\phi_\lambda)$ is a locally closed immersion. Thus it suffices to show that it is proper.
Suppose $f:\Spec(R)\to Z_{\geq \bfm}$ is a morphism from a DVR such that $z_{K}:=f(\Spec (K))\in S_{\lambda}$. Let $(X,D)\to \Spec(R)$ be the $f$-pullback of the universal log Fano family over $Z_{\geq \bfm}$. Then $\lambda$ induces a special test configuration $(\cX_K,\cD_K)$ of $(X_K, D_K)$ such that $\bmu(\cX_K,\cD_K)=\bfm$.
 Since 
$z_\kappa =[(X_\kappa ,D_\kappa)]\in Z_{\geq \bfm}$, we know that $M^{\bmu}(X_\kappa ,D_\kappa)\geq \bfm$.
Hence Proposition \ref{t:BLZTheta} implies that the test configuration $(\cX_K,\cD_K)$ extends to a $\G_m$-equivariant family of log Fano pairs $(\cX,\cD)\to\A_R^1$ such that $(\cX,\cD)_1 \cong (X,D)$ over $\Spec(R)$. Moreover, Lemma \ref{l:futnormfiberwise} implies that the special test configuration $(\cX_\kappa,\cD_\kappa)$ over $\A_{\kappa}^1$ satisfies 
\[
\bmu(\cX_\kappa,\cD_\kappa)=\bmu(\cX_K,\cD_K)=\bfm.
\]
Hence we may extend $f$ to $\tilde{f}: \A_R^1\to Z_{\geq \bfm}$ where $\tilde{f}_t=\lambda(t)\cdot f$ for $t\in\G_m$ where $(\cX,\cD)$ is $\G_m$-equivariantly isomorphic to the $\tilde{f}$-pullback of the universal log Fano family over $Z_{\geq \bfm}$. Therefore, $f$ admits a lifting to $S_\lambda$ which implies that $\mathrm{ev}_1(\phi_\lambda):S_\lambda\to Z_{\geq \bfm}$ is proper.
\end{proof}

Denote by $N'_{\prim}$ the subset of $N'\setminus\{0\}$ consisting of primitive $1$-PS'. For $\bfm\neq \mathbf{0}$ we define 
\[
\sS_{\bfm}:= \bigsqcup_{\lambda\in N'_{\prim}}[S_\lambda/P_\lambda].
\]
For $\bfm=\mathbf{0}$ we define $\sS_{\mathbf{0}}:=[Y_0/P_0]=\sX_{\geq \mathbf{0}}$ parametrizing trivial maps.

\begin{thm}\label{t:sXtheta}
Assume Conjecture \ref{c:optdest} holds. 
Then the data $(\sX_{\geq \bfm}, \sS_{\bfm})_{\bfm\in\Gamma}$ form a well-ordered $\Theta$-stratification of $\sX=\cM_{h,c}^{\mathrm{Fano}}$.
\end{thm}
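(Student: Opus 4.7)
The plan is to verify the axioms of a well-ordered $\Theta$-stratification from Definition \ref{d:ThetaStrat} one at a time, using the preparatory results already in place. Openness and finite type of each $\sX_{\geq\bfm}$ were established above, the nesting $\sX_{\geq\bfm'}\subset\sX_{\geq\bfm}$ for $\bfm'>\bfm$ is immediate from the pointwise definition, and the maximum of $\{\bfm\in\Gamma:x\in\sX_{\geq\bfm}\}$ is $M^\bmu(x)$. So the remaining three conditions to verify are that $\mathrm{ev}_1:\sS_\bfm\to\sX_{\geq\bfm}$ is a closed immersion, that the complement $\sX_{\geq\bfm}\setminus\mathrm{ev}_1(\sS_\bfm)$ equals $\bigcup_{\bfm'>\bfm}\sX_{\geq\bfm'}$, and the well-ordering property.

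For the closed immersion, with $\bfm\neq\mathbf{0}$, I would proceed component by component. Proposition \ref{p:thetaclosed} provides a $P_\lambda$-equivariant closed immersion $S_\lambda\hookrightarrow Z_{\geq\bfm}$ for each $\lambda\in N'_{\prim}$, and standard descent of BB-type strata induces a closed immersion $[S_\lambda/P_\lambda]\hookrightarrow\sX_{\geq\bfm}$. To conclude that the disjoint union $\sS_\bfm=\bigsqcup_\lambda[S_\lambda/P_\lambda]$ remains a closed immersion I would combine two inputs: first, that only finitely many $\lambda\in N'_{\prim}$ give a non-empty $S_\lambda$, since $\sX_{\geq\bfm}$ is of finite type and Proposition \ref{p:descending} bounds the $\bmu$-values that can appear; second, the uniqueness in Theorem \ref{t:HNfilts}.2 together with primitivity of $\lambda$, which forces each K-unstable pair $(X,D)$ with $M^\bmu(X,D)=\bfm$ to admit up to isomorphism a single primitive optimal test configuration, hence to determine a unique $\PGL_{m+1}$-conjugacy class of $\lambda$ after fixing the embedding. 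This pins the images of the different $[S_\lambda/P_\lambda]$ in $\sX_{\geq\bfm}$ to be pairwise disjoint.

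For the complement condition I would check that $[(X,D)]\in\mathrm{ev}_1(\sS_\bfm)$ iff $M^\bmu(X,D)=\bfm$. The ``only if'' direction is immediate: any point of $S_\lambda$ produces a test configuration with $\bmu$-value $\bfm$, so $M^\bmu(X,D)\leq\bfm$, while membership in $\sX_{\geq\bfm}$ gives the reverse inequality. For ``if'', Theorem \ref{t:HNfilts}.1 (which uses Conjecture \ref{c:optdest}) produces a minimizing special test configuration $(\cX,\cD)$, Proposition \ref{p:minimum-same} ensures $M^\bmu(\cX_0,\cD_0)=\bfm$ so the entire $\Theta$-map lands inside $\sX_{\geq\bfm}$, and passing to the primitive representative places this map in some $S_\lambda$.

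For well-orderedness, the key observation is that $\overline{\{x\}}$ is bounded in $\sX$: it is dominated by the $\PGL_{m+1}$-orbit closure of any lift of $x$ inside the finite type parameter space $\mathbf{P}$ from Section \ref{ss:paramtest}, and hence lies in some $\cM_{h,c}^{\delta\geq\epsilon}$. Proposition \ref{p:descending}.1 then forces the set of $M^\bmu$-values on this bounded substack to be finite, so the set $\{\bfm\in\Gamma:\mathrm{ev}_1(\sS_\bfm)\cap\overline{\{x\}}\neq\emptyset\}$ is finite and every nonempty subset has a maximum. The hardest step in this plan is the closed immersion property: without Theorem \ref{t:HNfilts}.2 one could not rule out overlap between the different $\lambda$-strata, so the uniqueness of the optimal degeneration is exactly what is needed to glue the pieces $[S_\lambda/P_\lambda]$ into a single closed substack of $\sX_{\geq\bfm}$.
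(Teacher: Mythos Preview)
Your argument has a genuine gap in the closed-immersion step. From the $P_\lambda$-equivariant closed immersion $S_\lambda\hookrightarrow Z_{\geq\bfm}$ one only obtains a closed immersion $[S_\lambda/P_\lambda]\hookrightarrow [Z_{\geq\bfm}/P_\lambda]$; the further passage $[Z_{\geq\bfm}/P_\lambda]\to[Z_{\geq\bfm}/\PGL_{m+1}]=\sX_{\geq\bfm}$ is a $\PGL_{m+1}/P_\lambda$-bundle, hence proper but certainly not an immersion. So there is no ``standard descent'' that makes $[S_\lambda/P_\lambda]\to\sX_{\geq\bfm}$ a closed immersion. The paper's proof proceeds differently: it shows $\mathrm{ev}_1$ is proper (exactly via this composition), and then separately establishes universal injectivity by analyzing $\psi:\PGL_{m+1}\times_{P_\lambda}S_\lambda\to Z_{\geq\bfm}$. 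The key point you are missing is \emph{within} a single $\lambda$: if $z_1,z_2\in S_\lambda$ lie in the same $\PGL_{m+1}$-orbit, one must show they lie in the same $P_\lambda$-orbit. Uniqueness alone (Theorem~\ref{t:HNfilts}.2) only gives $z_2=p\cdot z_1$ for some $p\in P_\lambda$; one still has to argue that the original $g\in\PGL_{m+1}$ with $z_1=g\cdot z_2$ actually lies in $P_\lambda$, and this is where Corollary~\ref{c:equiv} (equivariance of the optimal test configuration under the full stabilizer) enters. Even then, proper plus universally injective only yields a \emph{weak} $\Theta$-stratum; the paper invokes \cite{HL14}*{Corollary~2.6.1} to upgrade to a closed immersion in characteristic zero. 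Your use of uniqueness solely to separate different $\lambda$-strata misses this entire intra-stratum issue.

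A smaller point: your well-orderedness argument rests on the assertion that $\overline{\{x\}}$ is bounded, but $\sX=\cM_{h,c}^{\rm Fano}$ is only locally of finite type and the $\delta$-invariant can drop under specialization, so a point need not have quasi-compact closure. The paper bypasses this entirely: by Proposition~\ref{p:descending}, for any $\bfm\in\Gamma$ the set $\Gamma_{\geq\bfm}$ is finite (all such values arise from pairs with $\delta\geq\bfm_1+1$, which is a bounded family), and hence every nonempty subset of $\Gamma$ has a maximum. This already gives the well-ordering condition without saying anything about closures.
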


\begin{proof}
We first show that for each $\bfm\in\Gamma$, the stack $\sS_{\bfm}$ is a $\Theta$-stratum of $\sX_{\geq \bfm}$. The statement is clear when $\bfm=\mathbf{0}$ as $\sS_{\mathbf{0}}=\sX_{\mathbf{0}}$. Hence we may assume that $\bfm\neq \mathbf{0}$. By Proposition \ref{p:thetaclosed} we know that $S_\lambda\to Z_{\geq \bfm}$ is a closed immersion. Thus we know that the morphism $\mathrm{ev}_1: \sS_{\bfm}\to \sX_{\geq \bfm}$ is a composition of proper morphisms as below:
\[
\sS_{\bfm}=\sqcup_{\lambda}[S_\lambda/P_\lambda]\to [Z_{\geq \bfm}/P_\lambda] \to [Z_{\geq \bfm}/\PGL_{m+1}]=\sX_{\geq \bfm}.
\]
Hence $\mathrm{ev}_1$ is proper. Next, we show that $\mathrm{ev}_1$ is universally injective. Since we work over characteristic zero, it suffices to show that the $\PGL_{m+1}$-equivariant morphism 
\[
\psi:\PGL_{m+1}\times_{P_{\lambda}}S_\lambda\to Z_{\geq \bfm}
\]
is injective whose $\PGL_{m+1}$-quotient gives $\mathrm{ev}_1$. Suppose $(g_1,z_1)$ and $(g_2,z_2)$ in $\PGL_{m+1}\times S_\lambda$ have the same image in $Z_{\geq\bfm}$, i.e. $z_1=g_1^{-1} g_2 \cdot z_2$. Hence we know that $z_1$ and $z_2$ belong to the same $\PGL_{m+1}$-orbit in $Z_{\geq \bfm}$. In other words, they correspond to different embeddings into $\bP^m$ of the same log Fano pair $(X,D)$ with $M^{\bmu}(X,D)=\bfm$. Since $z_1,z_2\in S_\lambda$, we know that $\bmu(z_1,\lambda)=\bmu(z_2,\lambda)=\bfm$ which implies that $\lambda$ induces $\bmu$-minimizing primitive special test configurations $(\cX^1,\cD^1)$ and $(\cX^2,\cD^2)$ of  $(X,D)$. 
By uniqueness of minimizers from Theorem \ref{t:HNfilts}.2, we know that $(\cX^1,\cD^1)\cong (\cX^2,\cD^2)$ as test configurations. In other words, the two morphisms $\Theta\to \sX_{\geq \bfm}$ induced by $(z_i, \lambda)$ for $i=1,2$ represent the same point in the mapping stack. Therefore, we have that $z_2=p \cdot z_1$ for some $p\in P_\lambda$. Denote by $g:= g_1^{-1} g_2 p$, so that $z_1$ is a $g$-fixed point. By Corollary \ref{c:equiv}, we know that $g$ acts on the special test configuration $(\cX_1,\cD_1)$ which implies that $g\in P_\lambda$. In particular, $g_1^{-1}g_2\in P_\lambda$. Hence $\psi$ is injective which implies that $\mathrm{ev}_1$ is universally injective. As a result, we have shown that $\sS_{\bfm}$ is a weak $\Theta$-stratum of $\sX_{\geq \bfm}$ (see \cite[Definition 2.1]{HL14}). Since we work over characteristic zero, the weak $\Theta$-stratum $\sS_{\bfm}$ is also a $\Theta$-stratum of $\sX_{\geq \bfm}$ by \cite[Corollary 2.6.1]{HL14}.


Next, we show that the complement of $\sS_{\bfm}$ in $\sX_{\geq \bfm}$ is precisely $\sX_{>\bfm}$. This is trivial for $\bfm=\mathbf{0}$, so we assume $\bfm\neq \mathbf{0}$.
If $z=[(X,D)]\in S_{\lambda}$, then we have $M^{\bmu}(X,D)=\bmu(z,\lambda)=\bfm$. Hence $\sS_{\bfm}$ is disjoint from $\sX_{>\bfm}$. On the other hand, if $[(X,D)]\in |\sX_{\geq \bfm}|\setminus |\sX_{>\bfm}|$, then Theorem \ref{t:HNfilts}.1 implies that there exists a primitive special test configuration $(\cX,\cD)$ of $(X,D)$ such that $M^{\bmu}(X,D)=\bmu(\cX,\cD)=\bfm$.
By Proposition \ref{p:minimum-same} we know that $M^{\bmu}(\cX_0,\cD_0)=M^{\bmu}(X,D)=\bfm$. Hence the test configuration $(\cX,\cD)$ corresponds to a point in $\uMap(\Theta, \sX_{\geq m})$ with $\bmu(\cX,\cD)=\bfm$. From the definition of $S_\lambda$ and $\sS_{\bfm}$, we know that $(\cX,\cD)$ is induced by some $\lambda\in N'_{\prim}$ and $z= [(X,D)\hookrightarrow\PP^m]\in S_\lambda$.
 Hence 
 $[(X,D)]$ belongs to the image of $\mathrm{ev}_1:\sS_{\bfm}\to \sX_{\geq \bfm}$. This shows that the complement of $\sS_{\bfm}$ in $\sX_{\geq \bfm}$ is $\sX_{>\bfm}$. 

Finally, we show that $(\sS_{\bfm},\sX_{\geq \bfm})_{\bfm\in\Gamma}$ form a well-ordered $\Theta$-stratification of $\sX$.  By Proposition \ref{p:descending}, we know that for any $\bfm\in \Gamma$ the subset $\Gamma_{\geq \bfm}:=\{\bfm'\in \Gamma \mid \bfm'\geq \bfm\}$ of $\Gamma$ is finite. Hence every non-empty subset of $\Gamma$ has a maximal element. Thus the proof is finished.
\end{proof}

Combining Theorem \ref{t:sXtheta} with \cite{AHLH18} and \cite{LX14}, we deduce the following corollaries.

\begin{cor}\label{c:Mkssproper}
If Conjecture \ref{c:optdest} holds, then $\cM_{n,V,c}^{\rm Kss}$ satisfies the existence part of the valuative criterion for properness with respect to essentially finite type DVRs over $k$. 
\end{cor}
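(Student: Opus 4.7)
The plan is to deduce this from Theorem \ref{t:sXtheta} using the general machinery of \cite{AHLH18}. By the decomposition $\cM_{n,V,c}^{\rm Kss}=\bigsqcup_h \cM_{h,c}^{\rm Kss}$ from \eqref{eq:hilbert}, it suffices to verify the criterion for each component $\cM_{h,c}^{\rm Kss}$. Fix such an $h$, an essentially finite type DVR $R$ with fraction field $K$ and residue field $\kappa$, and a map $\Spec(K) \to \cM_{h,c}^{\rm Kss}$ corresponding to a K-semistable log Fano pair $(X_K,D_K)$. The goal is to produce, after possibly a finite extension of $R$, an extension $\Spec(R) \to \cM_{h,c}^{\rm Kss}$.

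First I would extend $(X_K,D_K)$ to a family of log Fano pairs over $\Spec(R')$ for some finite extension $R'/R$, dropping the K-semistability condition on the special fiber; that is, produce a morphism $\Spec(R') \to \cM_{h,c}^{\rm Fano}$. Since $\delta(X_K,D_K)\geq 1$, one can embed $X_K$ projectively via a large multiple of $-K_{X_K}-D_K$ into some $\PP^m$, extend the embedded subscheme via properness of the Hilbert scheme, and extend the divisor $\Delta_K$ using the K-flat moduli of Mumford divisors from \cite{Kol19}; one then applies the MMP to the resulting special fiber (or an analogous birational modification) to land inside $\cM_{h,c}^{\rm Fano}(R')$.

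The second step applies the Langton-type procedure furnished by the $\Theta$-stratification. If $(X_\kappa,D_\kappa)$ is K-semistable we are done. Otherwise Theorem \ref{t:HNfilts} provides a unique primitive special test configuration $(\cX,\cD)$ of $(X_\kappa,D_\kappa)$ computing $M^{\bmu}(X_\kappa,D_\kappa) < \mathbf{0}$; by Theorem \ref{t:sXtheta} this test configuration realises the canonical HN-filtration of $[(X_\kappa,D_\kappa)]$ in the stratum $\sS_{\bfm}$ with $\bfm := M^{\bmu}(X_\kappa,D_\kappa)$. Following \cite{AHLH18}, the pair consisting of the family $(X,D) \to \Spec(R')$ together with the map $\Theta_\kappa \to \cM_{h,c}^{\rm Fano}$ defined by $(\cX,\cD)$ determines, after a further finite base change $R''/R'$, a new family $(X',D') \to \Spec(R'')$ with $(X'_K,D'_K) \cong (X_K,D_K)_{R''}$ but with $M^{\bmu}(X'_\kappa,D'_\kappa) > \bfm$ strictly (the Theta-surgery moves the special fiber into a strictly higher stratum).

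Iterating this procedure, the well-orderedness of $\Gamma$ from Theorem \ref{t:sXtheta} guarantees termination in finitely many steps with a K-semistable special fiber, yielding the desired extension in $\cM_{h,c}^{\rm Kss}$. The main obstacle is the initial extension step, namely ensuring that the extended special fiber is a klt log Fano pair rather than a worse degeneration; this requires careful use of boundedness and birational techniques. The subsequent Theta-surgery and termination are essentially formal consequences of the well-ordered $\Theta$-stratification and the framework of \cite{AHLH18}.
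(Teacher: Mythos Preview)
Your proposal is correct and follows the same two-step strategy as the paper: first extend the generic fiber to a family in $\cM^{\rm Fano}$ over a finite extension of $R$, then use the well-ordered $\Theta$-stratification together with \cite{AHLH18} to modify the special fiber until it becomes K-semistable. The paper makes your sketch precise by invoking \cite[Theorem~1]{LX14} for the first step (the MMP-type modification producing a log Fano special fiber) and \cite[Theorem~6.5]{AHLH18} for the second, rather than spelling out the Langton iteration you describe.
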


\begin{proof}
Let $R$ be a DVR essentially of finite type over $k$, with fraction field $K$ and residue field $\kappa$
and $[(X_K,D_K)]\in \cM_{n,V,c}^{ \rm Kss}(K)$. 
Fix $r> 0$ such that $\cL_K:= \cO_{X_K}(-r(K_{X_K}+D_K))$ is a very ample line bundle and set $m: = h^0( X_K,\cL_K)-1$. 
By taking the closure of $X_K$ under the embedding 
\[
X_K \hookrightarrow \PP(H^0(X_K,\cL_K))  \simeq \PP_K^m \hookrightarrow \PP^m_R
\]
and then normalizing,  we see $(X_K,D_K;\cL_K)$ extends to a family $(X,D;\cL)\to \Spec(R)$, 
where $X$ is a normal variety with
 a flat projective morphism  $X\to \Spec(R)$, $D$ is $\Q$-divisor on $X$ whose support does not contain a fiber, and $\cL$ is a $\pi$-ample line bundle on $X$.
 
By \cite[Theorem 1]{LX14}, there exists a finite extension $R\to R'$ of DVRs
and a family $[(X',D')\to \Spec(R')] \in \cM_{n,V,c}^{\rm Fano} (R')$ 
so that \[
(X'_{K'},D'_{K'}) \simeq (X,D) \times_R K'.\]
(We note that while the result in \cite{LX14} is proven in
the case when  $\Spec(R)$ is the germ of a smooth curve and there is no boundary divisor, the argument extends with little change to this setting.)
Since  $ \sX:=\cM_{n,V,c}^{\rm Fano}$ admits  a well-ordered $\Theta$-stratification
with $ \sX_{\geq {\bf 0}}=  \cM_{n,V,c}^{\rm Kss}$ (Theorem \ref{t-theta}) 
and  $[(X'_{K'}, D'_{K'})] \in \cM_{n,V,c}^{\rm Kss}(K')$, 
\cite[Theorem 6.5]{AHLH18} implies the  existence of a finite extension $R'\to R''$ of DVRs
and a family  ${[(X'',D'')\to \Spec(R'')] \in \cM_{n,V,c}^{\rm Kss}(R'')}$ so that \[
(X''_{K''},D''_{K''}) \simeq (X',D') \times_{R'} K''.\]
Since the latter is isomorphic to $(X_K,D_K) \times_{K} K''$, the proof is complete.
 \end{proof}
 
 Finally, we prove Corollary \ref{cor:properness}.
 
 \begin{proof}[Proof of Corollary \ref{cor:properness}]
Since $M_{n,V,c}^{\rm Kps}$ is separated (Theorem \ref{t:K-moduli}) and $\cM_{n,V,c}^{\rm Kss}$ satisfies the existence 
part of the valuative criterion for properness with respect to essentially finite type DVRs over $k$ (Corollary \ref{c:Mkssproper}), \cite[Proposition 3.47 and Remark 3.48]{AHLH18} 
implies that $M_{n,V,c}^{\rm Kps}$ is proper.
\end{proof}

\section{Alternative perspective using the general theory of numerical invariants}
\label{s:general_stability}

For the sake of concreteness, we have given the construction of a $\Theta$-stratification explicitly, using the presentation of $\cM_{n,V,c}^{\delta \geq \epsilon}$ as a global quotient stack under the assumption of Conjecture \ref{c:optdest}. We now explain how our setup fits into the more general framework of \cite{HL14}, which provides necessary and sufficient criteria for the existence of a $\Theta$-stratification. The general existence criterion gives a shorter proof of Theorem \ref{t:sXtheta}, but it uses many of the same inputs. We will consider an algebraic stack $\sX$ locally of finite presentation over $k$ and whose points have affine automorphism groups.

\begin{defn}
A \emph{numerical invariant} on $\sX$ with values in a complete totally ordered vector space $V$ is the data of an assignment to any homomorphism with finite kernel $\gamma : (\G_m^n)_{k'} \to \Aut_\sX(p)$, where $k'/k$ is an extension field and $p \in \sX(k')$ is a $k'$-point, a function $\mu_\gamma : \bR^{n} \setminus \{0\} \to V$ that is invariant under scaling by $\bR_{>0}^\times$ and such that:
\begin{enumerate}
    \item $\mu_\gamma$ is compatible with field extensions;
    \item given a group homomorphism with finite kernel $\phi : (\G_m^r)_{k'} \to (\G_m^n)_{k'}$ the function $\mu_{\gamma \circ \phi}$ agrees with the restriction of $\mu_\gamma$ along the corresponding inclusion $\bR^r \hookrightarrow \bR^n$; and
    \item for a family $\xi : S \to \sX$ from a finite type scheme $S$ and a homomorphism $\gamma : (\G^n_m)_S \to \Aut_\sX(\xi)$, the function $\mu_{\gamma_s} : \bR^{n} \setminus 0 \to V$ is a locally constant function of the point $s \in S$.
\end{enumerate}
\end{defn}
The fiber of ${\rm ev}_{1} :\underline{\Map}( \Theta,\sX) \to \sX$ over a point $p \in \sX(k')$, denoted $\Filt(p):= {\rm ev}_1^{-1}(p)$, parameterizes filtrations of $p$. It is an algebraic space if $\sX$ has separated inertia. A point $f \in \Filt(p)$ corresponds to a map $f : \Theta_{k''} \to \sX$ along with an isomorphism $f(1) \cong p|_{k''}$ for some further field extension $k''/k'$.
Note that $f(0)$ has a canonical cocharacter in its automorphism group, and we define
\[
\mu(f) := \mu_{\left[ (\G_m)_{k''} \to \Aut_{\sX}(p) \right]}(1).
\]
Therefore, we regard $\mu$ as giving a function on the space of non-degenerate filtrations, meaning those for which $\G_m$ acts non-trivially on the special fiber. As in \eqref{eq:M=infmu}, for any numerical invariant we consider the stability function on $|\sX|$
\begin{equation} \label{E:stability_function}
M^\mu(x) := \inf_{f \in |\Filt(x)|} \mu(f).
\end{equation}

\begin{defn}
Given a numerical invariant $\mu$ on an algebraic stack $\sX$, consider an arbitrary homomorphism $\gamma : (\G_m^n)_{k'} \to \Aut_\sX(p)$. We will say that
\begin{enumerate}
    \item $\mu$ is \emph{standard} if for any $\gamma$, $\mu_\gamma(-x)$ and $\mu_\gamma(x)$ cannot both be negative, and $\mu$ is strictly quasi-convex in the sense that for two linearly independent $x_0, x_1 \in \bR^n$ with $\mu_\gamma(x_0),\mu_\gamma(x_1) < 0$, and any $t \in (0,1)$, one has $$\mu_\gamma(tx_0 + (1-t)x_1) < \max\{\mu_\gamma(x_0),\mu_\gamma(x_1)\}.$$
    \item $\mu$ satisfies condition $(R)$ (rationality) if for any $\gamma$, if $\mu_\gamma(x)<0$ for some $x \in \bR^n \setminus \{0\}$, then $\mu_\gamma$ achieves a minimum at some point in $\bQ^n \setminus \{0\}$.
\end{enumerate}
\end{defn}

These conditions are straightforward to verify in practice, because they only depend on the functional form of the $\mu_\gamma$ and not on the geometry of the stack $\sX$. We can now state the main theorem of \cite{HL14} in our current context, over a base field of characteristic $0$:

\begin{thm}[\cite{HL14}*{Thm.~4.38}] \footnote{The current arXiv version of \cite{HL14} states a weaker version of Theorem \ref{T:existence_criterion} that applies to real valued numerical invariants, and also includes the condition of uniqueness of HN filtrations. The theorem we discuss here appears in an update of \cite{HL14} that is not yet available on the arXiv.} \label{T:existence_criterion}
Let $\sX$ be a locally finite type $k$-stack with affine automorphism groups, and let $\mu$ be a standard numerical invariant on $\sX$ that satisfies condition $(R)$. Then $\mu$ defines a $\Theta$-stratification on $\sX$ if and only if
\begin{enumerate}
    \item \textbf{HN specialization}: Let $R$ be an essentially finite type DVR $R$ with fraction field $K$ and residue field $\kappa$, and consider a family $\xi : \Spec(R) \to \sX$. For any filtration $f_K$ of $\xi_K$, one has $\mu(f_K) \geq M^\mu(\xi_\kappa)$, and if equality holds then $f_K$ extends uniquely to a filtration of the whole family, i.e., $\exists ! f : \Theta_R \to \sX$ with $f|_{\Theta_K} \cong f_K$ and $f|_{\Spec(R)} \cong \xi$.
    \item \textbf{HN boundedness}: For any bounded subset $S \subset |\sX|$, there is another bounded subset $S' \subset |\sX|$ such that for any point $x \in S$, in computing the infimum \eqref{E:stability_function} it suffices to consider only filtrations whose associated graded point lies in $S'$.
\end{enumerate}
\end{thm}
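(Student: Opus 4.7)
The plan is to prove both directions, with the hard work concentrated in sufficiency. The necessity direction (assuming a $\Theta$-stratification exists) is largely formal: each unstable point $x$ acquires a canonical HN filtration as the unique preimage under the closed immersion $\mathrm{ev}_1 : \sS_c \to \sX_{\geq c}$, where $c$ is the maximal stratum index containing $x$. The inequality part of HN specialization then follows from openness of $\sX_{>c}$, and the uniqueness part from properness of $\sS_c$ combined with the valuative criterion. HN boundedness follows because each $\sS_c$ is locally of finite type and, by the well-ordering hypothesis, only finitely many strata meet any bounded subset of $\sX$.

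For the converse, I would first establish existence and uniqueness of optimal filtrations. Given $x \in |\sX|$ with $M^\mu(x) < 0$, HN boundedness reduces the infimum in \eqref{E:stability_function} to an infimum over a bounded family of homomorphisms into automorphism groups, and condition $(R)$ then forces this infimum to be attained at a rational $1$-PS. Strict quasi-convexity of standard $\mu$ yields uniqueness up to rescaling: two minimizing filtrations $f_1, f_2$ of $x$ can be assembled into an equivariant family over $[\mathbb{A}^2 / \mathbb{G}_m^2]$ (just as in the proof of Theorem \ref{t:HNfilts}.2), and strict convexity on the span of the two cocharacters forces them to coincide.

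Next, I would define $\sX_{\geq c} \subset \sX$ as the locus where $M^\mu \geq c$, and let $\sS_c \subset \uMap(\Theta, \sX_{\geq c})$ be the union of those connected components of non-degenerate filtrations $f$ satisfying $\mu(f) = c$ and $M^\mu(\mathrm{ev}_1(f)) = c$. Openness of $\sX_{\geq c}$ is exactly lower semicontinuity of $M^\mu$, which is the inequality part of HN specialization. That $\sS_c$ is a union of connected components of the mapping stack uses property $(3)$ in the definition of a numerical invariant, so that $\mu$ is locally constant. Well-orderedness of the resulting stratification follows from HN boundedness together with condition $(R)$, which restrict the possible $\mu$-values accumulating at any given point.

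The main obstacle is showing $\mathrm{ev}_1 : \sS_c \to \sX_{\geq c}$ is a closed immersion. Properness is precisely the uniqueness clause of HN specialization, packaged through the valuative criterion. Universal injectivity reduces to the uniqueness of optimal filtrations established above, combined with the fact that any automorphism of $x$ must carry an optimal filtration to another optimal filtration and hence fix it up to the $\mathbb{G}_m$ of reparametrizations. The subtlest ingredient is that $\mathrm{ev}_1$ is unramified: I would analyze first-order deformations of an optimal filtration, observing that any non-trivial such deformation would, after integration, produce a $2$-parameter family of filtrations on which strict quasi-convexity of $\mu$ could be applied to contradict the optimality of the central fiber. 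This infinitesimal rigidity of optimal filtrations is the technical heart of the argument and the place where the standardness hypothesis is essentially used.
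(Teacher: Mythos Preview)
The paper does not contain a proof of this theorem. It is stated as a citation to \cite{HL14}*{Thm.~4.38}, with the footnote explicitly noting that the version being cited ``appears in an update of \cite{HL14} that is not yet available on the arXiv.'' The paper then \emph{applies} this black-box criterion to the specific setting of $\cM_{n,V,c}^{\rm Fano}$ (via Example~\ref{E:numerical_invariant_1} and the proposition that follows), but never attempts to prove the general existence criterion itself.

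So there is nothing to compare your proposal against in this paper. Your sketch is a reasonable outline of the structure one would expect for such a proof in \cite{HL14}: necessity is formal, and sufficiency hinges on (i) existence and uniqueness of optimal filtrations via boundedness plus condition $(R)$ plus quasi-convexity, and (ii) verifying that $\mathrm{ev}_1$ is a closed immersion by combining the valuative criterion for properness (from HN specialization) with universal injectivity (from uniqueness). One caution: your proposed argument for unramifiedness via ``integrating a first-order deformation and applying strict quasi-convexity'' is vague as stated, and in the paper's own concrete verification (Theorem~\ref{t:sXtheta}) this step is sidestepped entirely by invoking \cite[Corollary 2.6.1]{HL14}, which upgrades a weak $\Theta$-stratum to a $\Theta$-stratum in characteristic zero. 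If you want a self-contained argument, that is the place where more detail would be needed; otherwise your outline matches the expected shape of the proof in \cite{HL14}.
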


The HN Specialization property implies the uniqueness, up to scaling, of minimizing filtrations, via an argument similar to that of Theorem \ref{t:BLZ} above. The HN boundedness condition implies the existence of minimizing filtrations and the constructibility of $M^\mu$.

\begin{example} \label{E:numerical_invariant_1}
In this paper, we have taken $V = \bR^2$ with its lexicographic order, and for any $\gamma : (\G_m^d)_{k'} \to \Aut(X,D)$, our numerical invariant is defined using the functions of Section \ref{S:stability_function} by
\[
\bm{\mu}_\gamma(x) := \left( \frac{\Fut(x)}{\mnorm{x}}, \frac{\Fut(x)}{\lnorm{x}} \right).
\]
For a test configuration, this gives $\bm{\mu}(\cX,\cD)$ as defined in Section \ref{S:main_results}. $\bm{\mu}$ is a standard numerical invariant that satisfies condition $(R)$. To see that this is standard, we observe that $\bm{\mu}_{\gamma}(x)$ and $\bmu_\gamma(-x)$ can not both be negative is automatic because $\Fut(x)$ is linear, and the quasi-convexity is established in Proposition \ref{p:quasiconvex}. The condition $(R)$ is established in Proposition \ref{p:minimizeroncone}.
\end{example}

Now that we have seen that our numerical invariant $\bm{\mu}$ is standard and satisfies condition (R), Theorem \ref{T:existence_criterion} implies that our main results, Theorems \ref{t:HNfilts} and \ref{t-theta}, follow from the following:

\begin{prop}
The numerical invariant of Example \ref{E:numerical_invariant_1} satisfies the HN specialization and HN boundedness conditions if Conjecture \ref{c:optdest} holds.
\end{prop}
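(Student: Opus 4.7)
Given $\xi : \Spec(R) \to \sX$ corresponding to a family $(X, D) \to \Spec(R)$ and a filtration $f_K$ of $\xi_K$ (i.e., a special test configuration $(\cX_K, \cD_K)$ of $(X_K, D_K)$), I would first observe that the inequality $\bmu(f_K) \geq M^{\bmu}(\xi_K) \geq M^{\bmu}(\xi_\kappa)$ is immediate: the first bound is the definition of $M^{\bmu}$ as an infimum, and the second is the lower semicontinuity statement of Proposition \ref{p:lscDVR}. In the equality case all three are equal, and in particular $\mu_1(f_K) = M_1^{\bmu}(\xi_\kappa) \leq 0$ (since $M_1^{\bmu} \leq 0$ in general, being $\delta-1$ or $0$). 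I would then invoke Proposition \ref{t:BLZTheta} to extend $(\cX_K, \cD_K)$ to a $\G_m$-equivariant family $(\cX, \cD) \to \A^1_R$ with $(\cX, \cD)_1 \cong (X, D)$, yielding a morphism $f : \Theta_R \to \sX$ restricting to $f_K$ and $\xi$. For uniqueness of $f$, I plan to use the correspondence \cite[\S2.5]{BHJ17} between special test configurations and finitely generated multiplicative $\Z$-filtrations of the section ring $R(X, L)$ with $L = -r(K_{X/R} + D)$: any two extensions determine two filtrations of $R(X, L)$ restricting to the same filtration of $R(X_K, L_K)$, and the saturated extension $\cF^p_K \cap R(X, L)$ is the only possibility.

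\textbf{HN Boundedness.} For a bounded $S \subset |\sX|$, I would note first that only finitely many Hilbert functions $h$ occur among points of $S$, and that constructibility of $\delta$ on bounded families \cite{BLX19} yields $\epsilon > 0$ with $\delta \geq \epsilon$ throughout $S$. Under Conjecture \ref{c:optdest}, Theorem \ref{t:HNfilts}(1) guarantees a minimizing filtration $f$ for each K-unstable $x = [(X,D)] \in S$, and its associated graded $f(0) = [(\cX_0, \cD_0)]$ satisfies $\delta(\cX_0, \cD_0) = \delta(X, D) \geq \epsilon$ by Theorem \ref{t:BLZinf}(2), and has the same Hilbert function as $(X,D)$ by Proposition \ref{p:invarianceEuler}. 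Consequently $f(0)$ lies in the finite disjoint union $\bigsqcup_h \cM_{h,c}^{\delta \geq \epsilon}$, which is bounded by Theorem \ref{t:bounded}. For K-semistable $x$ the infimum is attained by the trivial filtration with $f(0) = x$. I would then take $S'$ to be $S$ together with this bounded family.

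\textbf{Main obstacle.} The substantive input behind both verifications is Proposition \ref{t:BLZTheta}, whose proof rests on the birational-geometric techniques of \cite{BLZ19} and ultimately on a form of Conjecture \ref{c:optdest}. Granted this, both HN specialization and HN boundedness will reduce to lower semicontinuity of $\bmu$, uniqueness of minimizers (Theorem \ref{t:HNfilts}(2)), and the boundedness of $\{\delta \geq \epsilon\}$-loci. The subtlest point I anticipate is pinning down full uniqueness of the extension $f : \Theta_R \to \sX$ (rather than merely isomorphism of the fiber test configurations up to scaling), which will require identifying the extension with the saturation construction on the section ring and verifying that no ambiguity enters when recovering the normal total space from the filtration data.
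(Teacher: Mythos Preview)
Your proposal is correct and follows essentially the same route as the paper: HN specialization is exactly Propositions \ref{p:lscDVR} and \ref{t:BLZTheta}, and HN boundedness rests on the fact (Theorem \ref{t:BLZinf}(2), which is also what underlies Proposition \ref{p:M(X,D)=infl}) that a $\mu_1$-minimizing test configuration has central fiber with the same $\delta$-invariant, hence lands in the bounded substack $\cM^{\delta \geq \epsilon}_{h,c}$. Your added discussion of the uniqueness of the extension $f:\Theta_R \to \sX$ via saturation of the section-ring filtration is a detail the paper leaves implicit; the paper's brief proof simply cites Propositions \ref{p:lscDVR}, \ref{t:BLZTheta}, and \ref{p:M(X,D)=infl}.
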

\begin{proof}
The HN specialization property is shown in Propositions \ref{p:lscDVR} and \ref{t:BLZTheta}. The HN boundedness condition follows from Proposition \ref{p:M(X,D)=infl}, which implies that to find a filtration in $\cM^{\rm Fano}_{n,V,c}$, i.e., a special test configuration, of $(X,D) \in \cM^{\delta \geq \epsilon}_{n,V,c}$ that minimizes $\bm{\mu}$, it suffices to consider filtrations in the bounded substack $\cM^{\delta \geq \epsilon}_{n,V,c}$ itself.
\end{proof}

\subsection{Formal perturbation of numerical invariants}

We can regard the numerical invariant $\bmu$ in Example \ref{E:numerical_invariant_1} as a function
\begin{equation} \label{E:numerical_invariant_simple}
\bm{\mu} \left( \cX,\cD \right) = \mu_1(\cX,\cD) + \epsilon \mu_2(\cX,\cD)
\end{equation}
taking values in $\bR[\![\epsilon]\!]$, which we regard as a totally ordered vector space in which $f(\epsilon) > 0$ if the lowest order coefficient of $f$ is $>0$. We regard $\epsilon$ as a positive formal infinitesimal parameter, and $\bm{\mu}$ as a formal perturbation of $\mu_1$.

For any numerical invariant $\bmu'$ with values in $\bR[\![\epsilon]\!]$, which we write as $\bm{\mu}'(\cX,\cD) = \mu'_1(\cX,\cD) + \epsilon \mu'_2(\cX,\cD) + \cdots$, we can consider the truncated numerical invariants $\bm{\mu}'_{\leq n} := \mu'_1 + \epsilon \mu'_2 + \cdots + \epsilon^{n-1} \mu'_{n}$ for $n\geq 1$. Because $\bR[\![\epsilon]\!]$ has a lexicographic ordering, any filtration of $p\in \sX(k')$ that minimizes $\bm{\mu}'$ will also minimize $\bm{\mu}'_{\leq n}$ for all $n$. Conversely, if $S_n \subset |\Filt(p)|$ denotes the set of filtrations (up to rescaling) that minimize $\bm{\mu}'_{\leq n}$, then one can compute $S_{n+1} \subset S_n$ as the subset of minimizers of $\mu_{n+1}'$, and a minimizer for $\bm{\mu}'$ exists if and only if $\bigcap_n S_n$ is non-empty. Note that if $S_n$ is a singleton for any $n$, then so is $S_{n+1}$, and thus $\bigcap_n S_n$ is a singleton and hence non-empty.

The choice of the perturbation \eqref{E:numerical_invariant_simple} used in this paper is convenient, but we do not claim that it is the most natural from the perspective of K-stability. There are many numerical invariants that give rise to a $\Theta$-stratification subject to Conjecture \ref{c:optdest}. To illustrate this, consider another natural choice: perturbing the min norm $\mnorm{\cX,\cD}$ itself. This leads to a numerical invariant
\[
\bm{\mu}'(\cX,\cD) := \frac{\Fut(\cX,\cD)}{\mnorm{\cX,\cD}+\epsilon \lnorm{\cX,\cD}},
\]
which is to be understood as its Taylor expansion in $\epsilon$
\[
\bm{\mu}'(\cX,\cD) = \frac{\Fut(\cX,\cD)}{\mnorm{\cX,\cD}} \left( 1 - \epsilon \frac{\lnorm{\cX,\cD}}{\mnorm{\cX,\cD}} + O(\epsilon^2) \right).
\]
The following observation is purely formal.
\begin{lem}
For $p \in \sX(k')$, $\bm{\mu}$ and $\bm{\mu}'$ define the same notion of semistability, and if $p$ is unstable, then minimizing $\bm{\mu}'$ is equivalent to minimizing $\bm{\mu}$ among filtrations of $p$.
\end{lem}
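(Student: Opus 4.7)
The plan is to expand $\bm{\mu}'$ as a Taylor series in $\epsilon$ and show that, order by order in the lexicographic ordering on $\bR[\![\epsilon]\!]$, it selects the same filtrations as $\bm{\mu}$. The work is entirely formal once one uses the fact that on the locus where the leading term has been minimized, the second order conditions simplify in parallel ways.

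First I would handle semistability. For any test configuration $(\cX,\cD)$, both $\bm{\mu}$ and $\bm{\mu}'$ have leading term (in $\epsilon$) equal to $\mu_1(\cX,\cD) = \Fut(\cX,\cD)/\mnorm{\cX,\cD}$, so a power series in $\bR[\![\epsilon]\!]$ is $\geq 0$ in the lexicographic sense iff its leading coefficient is $>0$, or the leading coefficient vanishes and the remaining series is $\geq 0$. Since $\mu_1(\cX,\cD)=0$ forces $\Fut(\cX,\cD)=0$ and hence every subsequent coefficient vanishes as well, $\bm{\mu}(\cX,\cD) \geq 0 \iff \Fut(\cX,\cD) \geq 0 \iff \bm{\mu}'(\cX,\cD) \geq 0$. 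Thus $p$ is semistable with respect to one invariant iff it is with respect to the other.

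Next, assume $p=(X,D)$ is unstable and let $M := M_1^{\bm{\mu}}(X,D) < 0$ denote the common infimum of $\mu_1$ over non-trivial special test configurations. Lexicographic minimization in $\bR[\![\epsilon]\!]$ proceeds order by order, so a filtration $(\cX,\cD)$ minimizes either invariant only if it minimizes the leading coefficient $\mu_1$, i.e.\ $\mu_1(\cX,\cD)=M$. On this set, I would compute the second coefficient of each invariant. For $\bm{\mu} = \mu_1 + \epsilon\mu_2$ the second coefficient is
\[
\mu_2(\cX,\cD) = \frac{\Fut(\cX,\cD)}{\lnorm{\cX,\cD}} = M \cdot \frac{\mnorm{\cX,\cD}}{\lnorm{\cX,\cD}},
\]
and since $M<0$, minimizing this is equivalent to minimizing $\lnorm{\cX,\cD}/\mnorm{\cX,\cD}$. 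For $\bm{\mu}'$, the Taylor expansion recorded in the paper gives second coefficient
\[
-\mu_1(\cX,\cD)\cdot \frac{\lnorm{\cX,\cD}}{\mnorm{\cX,\cD}} = -M \cdot \frac{\lnorm{\cX,\cD}}{\mnorm{\cX,\cD}},
\]
and since $-M>0$, minimizing this is again equivalent to minimizing $\lnorm{\cX,\cD}/\mnorm{\cX,\cD}$. So the set of minimizers agrees through second order.

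Finally, I would argue that no higher-order terms can intervene. Once one restricts to the subset of filtrations achieving the common minimum through the first two orders, the functions $\mu_1$ and $\lnorm{\,}/\mnorm{\,}$ are both constant, and therefore \emph{all} subsequent coefficients of $\bm{\mu}'$ (which are universal polynomial expressions in $\mu_1$ and $\lnorm{\,}/\mnorm{\,}$, coming from the geometric series expansion of $1/(\mnorm{\,}+\epsilon\lnorm{\,})$) are likewise constant on this subset; meanwhile $\bm{\mu}$ has no higher-order terms at all. Hence the sets of minimizers of $\bm{\mu}$ and $\bm{\mu}'$ over $\Filt(p)$ coincide. There is no real obstacle here; the only thing to double-check is the sign bookkeeping from $M<0$ which is what makes the two perturbations, despite having opposite-signed second coefficients, produce the same optimization problem.
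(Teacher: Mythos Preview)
Your argument is correct and follows essentially the same approach as the paper: both proofs reduce to the observation that every coefficient of $\bm{\mu}'$ is a function of the pair $(\mu_1,\lnorm{\,}/\mnorm{\,})$, so once the first two orders are matched the rest is forced. The paper packages this as ``the value of $\bm{\mu}'_{\leq 2}$ uniquely determines the value of $\bm{\mu}'$'' and then compares $\bm{\mu}'_{\leq 2}$ with $\bm{\mu}$, whereas you proceed order by order, but the content and the sign analysis are the same.
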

\begin{proof}
Both $\bm{\mu}$ and $\bm{\mu}'$ are positive multiples of $\Fut(\cX,\cD) / \mnorm{\cX,\cD}$, so they define the same notion of semistability.

We first observe that a destabilizing test configuration minimizes $\bm{\mu}'$ if and only if it minimizes $\bm{\mu}'_{\leq 2}$. Indeed, if $(\cX,\cD)$ minimizes $\bm{\mu}'_{\leq 2}$ and $(\cX',\cD')$ is another test configuration with $\bm{\mu}'(\cX',\cD') \leq \bm{\mu}'(\cX,\cD)$, then one must have $\bm{\mu}'_{\leq 2}(\cX',\cD')=\bm{\mu}'_{\leq 2}(\cX,\cD)$. But the value of $\bm{\mu}'_{\leq 2}$ uniquely determines the value of $\bm{\mu}'$, so one must have $\bm{\mu}'(\cX,\cD) = \bm{\mu}'(\cX',\cD')$ as well.

We must now show that minimizing $\bm{\mu}'_{\leq 2}$ is equivalent to minimizing $\bm{\mu}$. Minimizing $\bm{\mu}'_{\leq 2}(\cX,\cD)$ is equivalent to first minimizing $\mu_1(\cX,\cD)$ above, and then maximizing $\lnorm{\cX,\cD} / \mnorm{\cX,\cD}$ among the set $S$ of test configurations that minimize $\mu_1(\cX,\cD)$. This in turn is equivalent to minimizing $\mnorm{\cX,\cD}/\lnorm{\cX,\cD}$ among test configurations in $S$, which is then equivalent to minimizing $\mu_2(\cX,\cD) = \mu_1(\cX,\cD) \mnorm{\cX,\cD} / \lnorm{\cX,\cD}$ in $S$.
\end{proof}

As an immediate consequence, we have the following:

\begin{cor}
If Conjecture \ref{c:optdest} holds, then $\bm{\mu}'$ defines a $\Theta$-stratification of $\cM^{\rm Fano}_{n,V,c}$ that coincides with that of Theorem \ref{t:sXtheta}.
\end{cor}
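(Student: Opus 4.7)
The plan is to apply Theorem \ref{T:existence_criterion} to $\bm{\mu}'$ and then identify the resulting $\Theta$-stratification with that of Theorem \ref{t:sXtheta} using the preceding Lemma. First I would check that $\bm{\mu}'$ is a standard numerical invariant satisfying condition $(R)$. For any homomorphism $\gamma : (\G_m^n)_{k'} \to \Aut_\sX(p)$ with finite kernel, the Lemma gives a bijection (up to rescaling by $\bR_{>0}$) between the minimizers of $\bm{\mu}_\gamma$ and those of $\bm{\mu}'_\gamma$ on $\bR^n \setminus \{0\}$. Standardness of $\bm{\mu}'$ then transfers from the strict quasi-convexity of $\bm{\mu}$ in Proposition \ref{p:quasiconvex}, and condition $(R)$ follows from Proposition \ref{p:minimizeroncone} since the unique rational minimizer of $\bm{\mu}_\gamma$ on a rational polyhedral cone is likewise a minimizer of $\bm{\mu}'_\gamma$.

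Second, I would verify HN specialization and HN boundedness for $\bm{\mu}'$. Because the $\bm{\mu}$- and $\bm{\mu}'$-minimizing filtrations agree at each point by the Lemma, HN boundedness is immediate from Proposition \ref{p:M(X,D)=infl} applied to $\bm{\mu}$. For HN specialization, given $\xi : \Spec(R) \to \sX$ and a filtration $f_K$ of $\xi_K$, the inequality $\bm{\mu}'(f_K) \geq M^{\bm{\mu}'}(\xi_\kappa)$ reduces to the corresponding inequality for $\bm{\mu}$ from Proposition \ref{p:lscDVR}, while the extension of an equality-achieving filtration to $\Theta_R$ is provided by Proposition \ref{t:BLZTheta}, exactly as in the proof of Theorem \ref{t:sXtheta}. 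Theorem \ref{T:existence_criterion} then yields a $\Theta$-stratification $(\sX^{\bm{\mu}'}_{\geq \bm{c}}, \sS^{\bm{\mu}'}_{\bm{c}})$ associated to $\bm{\mu}'$.

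To identify this stratification with $(\sX_{\geq \bfm}, \sS_{\bfm})$ from Theorem \ref{t:sXtheta}, I would invoke the Lemma once more to produce an order-preserving bijection of indexing sets $\Gamma \to \Gamma'$ via $M^{\bm{\mu}}(X,D) \mapsto M^{\bm{\mu}'}(X,D)$: this is well-defined and monotone because the Lemma identifies the set of minimizing primitive special test configurations at each point, so each value determines the other. Under this bijection, $\sX^{\bm{\mu}'}_{\geq \bm{c}}$ and $\sX_{\geq \bfm}$ share the same $k$-points and, being open substacks of $\cM^{\rm Fano}_{n,V,c}$, must coincide; the HN strata $\sS^{\bm{\mu}'}_{\bm{c}}$ and $\sS_{\bfm}$ likewise parametrize the same primitive special test configurations, hence define equal closed substacks of $\uMap(\Theta, \sX)$. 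The main obstacle is arranging that the pointwise equivalence of the Lemma propagates to an equality of the corresponding substacks in families, which is what the HN specialization property together with constructibility of $M^{\bm{\mu}'}$ (the analogue of Proposition \ref{p:descending}, with the same proof) supply.
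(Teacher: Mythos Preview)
The paper treats this corollary as an immediate consequence of the preceding Lemma, with no further argument: since a $\Theta$-stratification arising from a numerical invariant is determined by the HN filtrations at each point together with the level sets of the stability function, and since the Lemma identifies the $\bm{\mu}'$-minimizers with the $\bm{\mu}$-minimizers (and $\bm{\mu}'$ is a monotone function of $(\mu_1,\mu_2)$, so that $M^{\bm{\mu}}$ and $M^{\bm{\mu}'}$ cut out the same open substacks up to relabeling), the stratification for $\bm{\mu}'$ coincides with the one already constructed for $\bm{\mu}$ in Theorem~\ref{t:sXtheta}. There is no need to re-verify the hypotheses of Theorem~\ref{T:existence_criterion} for $\bm{\mu}'$.

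Your approach instead re-runs Theorem~\ref{T:existence_criterion} for $\bm{\mu}'$ from scratch, which is legitimate in principle but contains a genuine gap. You assert that ``standardness of $\bm{\mu}'$ then transfers from the strict quasi-convexity of $\bm{\mu}$'' via the bijection of minimizers furnished by the Lemma. This does not follow: knowing that two scale-invariant functions on $\bR^n\setminus\{0\}$ share the same set of minimizers says nothing about their behavior on an arbitrary segment between linearly independent points $x_0,x_1$, which is what strict quasi-convexity demands. (In fact $\bm{\mu}'$ \emph{is} standard, but one must argue directly: normalize so $\Fut(x_0)=\Fut(x_1)<0$ and show the denominator $\mnorm{\cdot}+\epsilon\lnorm{\cdot}$ satisfies the required strict inequality in $\bR[\![\epsilon]\!]$, using convexity of $\mnorm{\cdot}$ for the leading term and strict convexity of $\lnorm{\cdot}^2$ to break ties, exactly as in Proposition~\ref{p:quasiconvex}.) A similar issue appears in your HN specialization step: reducing $\bm{\mu}'(f_K)\geq M^{\bm{\mu}'}(\xi_\kappa)$ to the corresponding inequality for $\bm{\mu}$ requires knowing that the assignment $(\mu_1,\mu_2)\mapsto \mu_1/(1+\epsilon\,\mu_1/\mu_2)$ is order-preserving on $\bR^2_{<0}$, a short but non-vacuous check you omit.
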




\section*{References}
\begin{biblist}

\bib{AK19}{article}{
   author={Altmann, Klaus},
   author={Koll\'{a}r, J\'{a}nos},
   title={The dualizing sheaf on first-order deformations of toric surface
   singularities},
   journal={J. Reine Angew. Math.},
   volume={753},
   date={2019},
   pages={137--158},

}

\bib{AZ20}{article}{
      AUTHOR={Ahmadinezhad, Hamid}
      AUTHOR={Zhuang, Ziquan},
      TITLE={K-stability of Fano varieties via admissible flags},
       eprint={arXiv:2003.13788}, 
      YEAR={2020}
      }

\bib{ABHLX19}{article}{
      AUTHOR={Alper, Jarod},
      AUTHOR={Blum, Harold},
      AUTHOR={Halpern-Leistner, Daniel},
      AUTHOR={Xu, Chenyang},
      TITLE={Reductivity of the automorphism group of K-polystable Fano varieties},
journal={Invent. Math.},
   volume={222},
   date={2020},
   number={3},
   pages={995--1032},
      }

\bib{AHLH18}{article}{
    AUTHOR={Alper, Jarod},
    AUTHOR={Halpern-Leistner, Daniel},
    AUTHOR={Heinloth, Jochen},
    TITLE={Existence of moduli spaces for algebraic stacks},
     eprint={arXiv:1812.01128},
    YEAR={2018}
    }


\bib{BJ20}{article}{
    AUTHOR = {Blum, Harold}
    AUTHOR ={Jonsson, Mattias},
     TITLE = {Thresholds, valuations, and {K}-stability},
   JOURNAL = {Adv. Math.},
    VOLUME = {365},
      YEAR = {2020},
     PAGES = {107062},
}

\bib{BL18}{article}{
    AUTHOR={Blum, Harold},
    AUTHOR={Liu, Yuchen},
    TITLE={Openness of uniform K-stability in families of $\bQ$-Fano varieties},
    JOURNAL={Ann. Sci. \'{E}c. Norm. Sup\'{e}r. (to appear)}
    EPRINT = {arXiv:1808.09070},
    YEAR={2018},
    }

\bib{BLZ19}{article}{
    AUTHOR = {Blum, Harold},
    AUTHOR = {Liu, Yuchen},
    AUTHOR = {Zhou, Chuyu},
    TITLE = {Optimal destabilizations of K-unstable Fano varieties via stability thresholds},
    JOURNAL = {Geom. Topol. (to appear)}
    EPRINT = {arxiv:1907.05399},
    YEAR = {2019},
    }

\bib{BLX19}{article}{
    AUTHOR={Blum, Harold},
    AUTHOR={Liu, Yuchen},
    AUTHOR={Xu, Chenyang},
    TITLE={Openness of K-semistability for Fano varieties}
    EPRINT= {arXiv:1907.02408}
    YEAR={2019}
    }

\bib{BX19}{article}{
    AUTHOR = {Blum, Harold},
    AUTHOR = {Xu, Chenyang},
     TITLE = {Uniqueness of K-polystable degenerations of
              {F}ano varieties},
   JOURNAL = {Ann. of Math. (2)},
    VOLUME = {190},
      YEAR = {2019},
    NUMBER = {2},
     PAGES = {609--656},
}

\bib{BHJ17}{article}{
    AUTHOR = {Boucksom, S\'{e}bastien},
    AUTHOR = {Hisamoto, Tomoyuki},
    AUTHOR = {Jonsson, Mattias},
     TITLE = {Uniform {K}-stability, {D}uistermaat-{H}eckman measures and
              singularities of pairs},
   JOURNAL = {Ann. Inst. Fourier (Grenoble)},
    VOLUME = {67},
      YEAR = {2017},
    NUMBER = {2},
     PAGES = {743--841},
}

\bib{Che18}{article}{
    AUTHOR={Chen, Weichung},
    TITLE={Boundedness of varieties of Fano type with alpha-invariants and volumes bounded below},
 JOURNAL = {Publ. Res. Inst. Math. Sci.},
  FJOURNAL = {Publications of the Research Institute for Mathematical
              Sciences},
    VOLUME = {56},
      YEAR = {2020},
    NUMBER = {3},
     PAGES = {539--559},
}

\bib{CP21}{article}{
    AUTHOR = {Codogni, Giulio}
    AUTHOR = {Patakfalvi, Zsolt},
     TITLE = {Positivity of the {CM} line bundle for families of {K}-stable
              klt {F}ano varieties},
   JOURNAL = {Invent. Math.},
  FJOURNAL = {Inventiones Mathematicae},
    VOLUME = {223},
      YEAR = {2021},
    NUMBER = {3},
     PAGES = {811--894},
}


\bib{DS16}{article}{
    AUTHOR = {Datar, Ved},
    AUTHOR = {Sz\'{e}kelyhidi, G\'{a}bor},
     TITLE = {K\"{a}hler-{E}instein metrics along the smooth continuity method},
   JOURNAL = {Geom. Funct. Anal.},
    VOLUME = {26},
      YEAR = {2016},
    NUMBER = {4},
     PAGES = {975--1010},
}

\bib{Der16}{article}{
    AUTHOR = {Dervan, Ruadha\'{\i}},
     TITLE = {Uniform stability of twisted constant scalar curvature
              {K}\"{a}hler metrics},
   JOURNAL = {Int. Math. Res. Not. IMRN},
      YEAR = {2016},
    NUMBER = {15},
     PAGES = {4728--4783},
}

\bib{Don02}{article}{
    AUTHOR = {Donaldson, S. K.},
     TITLE = {Scalar curvature and stability of toric varieties},
   JOURNAL = {J. Differential Geom.},
    VOLUME = {62},
      YEAR = {2002},
    NUMBER = {2},
     PAGES = {289--349},
}

\bib{Don05}{article}{
    AUTHOR = {Donaldson, S. K.},
     TITLE = {Lower bounds on the {C}alabi functional},
   JOURNAL = {J. Differential Geom.},
    VOLUME = {70},
      YEAR = {2005},
    NUMBER = {3},
     PAGES = {453--472},
}

\bib{DS14}{article}{
    AUTHOR = {Donaldson, Simon},
    AUTHOR = {Sun, Song},
     TITLE = {Gromov-{H}ausdorff limits of {K}\"{a}hler manifolds and algebraic
              geometry},
   JOURNAL = {Acta Math.},
    VOLUME = {213},
      YEAR = {2014},
    NUMBER = {1},
     PAGES = {63--106},
}

\bib{FO18}{article}{
    AUTHOR = {Fujita, Kento},
    AUTHOR={Odaka, Yuji},
     TITLE = {On the {K}-stability of {F}ano varieties and anticanonical
              divisors},
   JOURNAL = {Tohoku Math. J. (2)},
    VOLUME = {70},
      YEAR = {2018},
    NUMBER = {4},
     PAGES = {511--521},
}

\bib{Fuj19}{article}{
    AUTHOR = {Fujita, Kento},
     TITLE = {A valuative criterion for uniform {K}-stability of {$\Bbb
              Q$}-{F}ano varieties},
   JOURNAL = {J. Reine Angew. Math.},
    VOLUME = {751},
      YEAR = {2019},
     PAGES = {309--338},
}

\bib{FM95}{article}{
   author={Futaki, Akito},
   author={Mabuchi, Toshiki},
   title={Bilinear forms and extremal K\"{a}hler vector fields associated with
   K\"{a}hler classes},
   journal={Math. Ann.},
   volume={301},
   date={1995},
   number={2},
   pages={199--210},
   issn={0025-5831},
}

\bib{Hein08}{article}{
    AUTHOR = {Heinloth, Jochen},
     TITLE = {Semistable reduction for {$G$}-bundles on curves},
   JOURNAL = {J. Algebraic Geom.},
    VOLUME = {17},
      YEAR = {2008},
    NUMBER = {1},
     PAGES = {167--183},
}

\bib{HL14}{article}{
    AUTHOR = {Halpern-Leistner, Daniel},
    TITLE = {On the structure of instability in moduli theory},
    EPRINT = {arXiv:1411.0627v4},
    YEAR = {2014}
}



\bib{His16}{article}{
   author={Hisamoto, Tomoyuki},
   title={On the limit of spectral measures associated to a test
   configuration of a polarized K\"{a}hler manifold},
   journal={J. Reine Angew. Math.},
   volume={713},
   date={2016},
   pages={129--148},
}

\bib{HMX14}{article}{
   author={Hacon, Christopher D.},
   author={McKernan, James},
   author={Xu, Chenyang},
   title={ACC for log canonical thresholds},
   journal={Ann. of Math. (2)},
   volume={180},
   date={2014},
   number={2},
   pages={523--571},
  }

\bib{HR19}{article}{
   author={Hall, Jack},
   author={Rydh, David},
   title={Coherent Tannaka duality and algebraicity of Hom-stacks},
   journal={Algebra Number Theory},
   volume={13},
   date={2019},
   number={7},
   pages={1633--1675},
   issn={1937-0652},
}

\bib{HLP20}{article}{
    author={Halpern-Leistner, Daniel},
    author={Preygel, Anatoly},
    title={Mapping stacks and categorical notions of properness}, 
    journal={Compositio Mathematica (to appear)},
    date={2020},
    eprint={arXiv:1402.3204},
}

\bib{HN74}{article}{
   author={Harder, G.},
   author={Narasimhan, M. S.},
   title={On the cohomology groups of moduli spaces of vector bundles on
   curves},
   journal={Math. Ann.},
   volume={212},
   date={1974/75},
   pages={215--248},
}

\bib{Jia17}{article}{
    AUTHOR = {Jiang, Chen},
   TITLE = {Boundedness of {$\Bbb Q$}-{F}ano varieties with degrees and
              alpha-invariants bounded from below},
   JOURNAL = {Ann. Sci. \'{E}c. Norm. Sup\'{e}r. (4)},
  FJOURNAL = {Annales Scientifiques de l'\'{E}cole Normale Sup\'{e}rieure. Quatri\`eme
              S\'{e}rie},
    VOLUME = {53},
      YEAR = {2020},
    NUMBER = {5},
     PAGES = {1235--1248},
}

\bib{Kem78}{article}{
    AUTHOR = {Kempf, George R.},
     TITLE = {Instability in invariant theory},
   JOURNAL = {Ann. of Math. (2)},
    VOLUME = {108},
      YEAR = {1978},
    NUMBER = {2},
     PAGES = {299--316},
}


\bib{Kol13}{book}{
    AUTHOR = {Koll\'{a}r, J\'{a}nos},
     TITLE = {Singularities of the minimal model program},
    SERIES = {Cambridge Tracts in Mathematics},
    VOLUME = {200},
      NOTE = {With a collaboration of S\'{a}ndor Kov\'{a}cs},
 PUBLISHER = {Cambridge University Press, Cambridge},
      YEAR = {2013},
}

\bib{Kol17}{book}{
    AUTHOR = {Koll\'{a}r, J\'{a}nos},
     TITLE = {Families of varieties of general type},
      NOTE = {Book in preparation. Available at: \url{https://web.math.princeton.edu/~kollar/book/modbook20170720.pdf}},
      YEAR = {2017},
}

\bib{Kol19}{article}{
    AUTHOR = {Koll\'{a}r, J\'{a}nos},
     TITLE = {Families of divisors},
      EPRINT={arXiv:1910.00937}
      YEAR = {2019},
}

\bib{KM98}{book}{
    AUTHOR = {Koll\'{a}r, J\'{a}nos},
    AUTHOR = {Mori, Shigefumi},
     TITLE = {Birational geometry of algebraic varieties},
    SERIES = {Cambridge Tracts in Mathematics},
    VOLUME = {134},
      NOTE = {With the collaboration of C. H. Clemens and A. Corti},
 PUBLISHER = {Cambridge University Press, Cambridge},
      YEAR = {1998},
}

\bib{Lan75}{article}{
    AUTHOR = {Langton, Stacy G.},
     TITLE = {Valuative criteria for families of vector bundles on algebraic
              varieties},
   JOURNAL = {Ann. of Math. (2)},
    VOLUME = {101},
      YEAR = {1975},
     PAGES = {88--110},
}

\bib{Li17}{article}{
    AUTHOR = {Li, Chi},
     TITLE = {K-semistability is equivariant volume minimization},
   JOURNAL = {Duke Math. J.},
    VOLUME = {166},
      YEAR = {2017},
    NUMBER = {16},
     PAGES = {3147--3218},
}

\bib{LLX18}{article}{
    AUTHOR = {Li, Chi},
    AUTHOR = {Liu, Yuchen},
    AUTHOR = {Xu, Chenyang},
    TITLE = {A guided tour to normalized volume},
      conference={
      title={Geometric analysis, in honor of Gang Tian's 60th birthday
},
   },
   book={
      series={Progr. Math.},
      volume={333},
      publisher={Birkh\"{a}user/Springer, Cham},
   },
   date={2020},
   pages={167--219},
}

\bib{LWX19}{article}{
    AUTHOR = {Li, Chi}
    AUTHOR = {Wang, Xiaowei}
    AUTHOR = {Xu, Chenyang},
     TITLE = {On the proper moduli spaces of smoothable {K}\"{a}hler-{E}instein
              {F}ano varieties},
   JOURNAL = {Duke Math. J.},
    VOLUME = {168},
      YEAR = {2019},
    NUMBER = {8},
     PAGES = {1387--1459},
}

\bib{LWX18}{article}{
    AUTHOR = {Li, Chi},
    AUTHOR = {Wang, Xiaowei},
    AUTHOR = {Xu, Chenyang},
    TITLE = {Algebraicity of metric tangent cones and equivariant K-stability},
    JOURNAL = {J. Amer. Math. Soc.},
    VOLUME = {34},
      YEAR = {2021},
    NUMBER = {4},
     PAGES = {1175--1214},
}

\bib{LX14}{article}{
    AUTHOR = {Li, Chi},
    AUTHOR = {Xu, Chenyang},
     TITLE = {Special test configuration and {K}-stability of {F}ano
              varieties},
   JOURNAL = {Ann. of Math. (2)},
    VOLUME = {180},
      YEAR = {2014},
    NUMBER = {1},
     PAGES = {197--232},
}

\bib{LXZ21}{article}{
    AUTHOR={Liu, Yuchen},
    AUTHOR={Xu, Chenyang},
    AUTHOR={Zhuang, Ziquan},
    TITLE={Finite generation for valuations computing stability thresholds and applications to K-stability},
    EPRINT={arXiv:2102.09405},
    YEAR={2021}
    }

\bib{GIT}{book}{
    AUTHOR = {Mumford, David},
    AUTHOR = {Fogarty, John},
    AUTHOR = {Kirwan, Frances},
     TITLE = {Geometric invariant theory},
    SERIES = {
            Results in Mathematics and Related Areas (2)},
    VOLUME = {34},
   EDITION = {Third},
 PUBLISHER = {Springer-Verlag, Berlin},
      YEAR = {1994},
}

\bib{Oda13}{article}{
    AUTHOR = {Odaka, Yuji},
     TITLE = {On the moduli of K\"ahler-Einstein Fano manifolds},
   JOURNAL = {Proc. Kinosaki symposium},
      YEAR = {2013},
}

\bib{Oda13b}{article}{
    AUTHOR = {Odaka, Yuji},
     TITLE = {A generalization of the {R}oss-{T}homas slope theory},
   JOURNAL = {Osaka J. Math.},
    VOLUME = {50},
      YEAR = {2013},
    NUMBER = {1},
     PAGES = {171--185},
      ISSN = {0030-6126},
}


\bib{RS19}{article}{
    AUTHOR = {Ross, Julius}
    AUTHOR = {Sz\'{e}kelyhidi, G\'{a}bor},
     TITLE = {Twisted K\"ahler-Einstein metrics },
   JOURNAL = {Pure Appl. Math. Q.},
      YEAR = {2021},
       volume={17},
   number={3},
   pages={1025--1044},
}

\bib{Sha77}{article}{
   author={Shatz, Stephen S.},
   title={The decomposition and specialization of algebraic families of
   vector bundles},
   journal={Compositio Math.},
   volume={35},
   date={1977},
   number={2},
   pages={163--187},
}

\bib{Sze08}{article}{
    AUTHOR = {Sz\'{e}kelyhidi, G\'{a}bor},
     TITLE = {Optimal test-configurations for toric varieties},
   JOURNAL = {J. Differential Geom.},
    VOLUME = {80},
      YEAR = {2008},
    NUMBER = {3},
     PAGES = {501--523},
}

\bib{Tia97}{article}{
    AUTHOR = {Tian, Gang},
     TITLE = {K\"{a}hler-{E}instein metrics with positive scalar curvature},
   JOURNAL = {Invent. Math.},
    VOLUME = {130},
      YEAR = {1997},
    NUMBER = {1},
     PAGES = {1--37},
}

\bib{TW19}{article}{
    AUTHOR = {Tian, Gang},
    AUTHOR = {Wang, Feng},
     TITLE = {On the existence of conic Kahler-Einstein metrics},
   EPRINT = {arXiv:1903.12547},
      YEAR = {2019},
}

\bib{Wan12}{article}{
    AUTHOR = {Wang, Xiaowei},
     TITLE = {Height and {GIT} weight},
   JOURNAL = {Math. Res. Lett.},
    VOLUME = {19},
      YEAR = {2012},
    NUMBER = {4},
     PAGES = {909--926},
}

\bib{WZ04}{article}{
   author={Wang, Xu-Jia},
   author={Zhu, Xiaohua},
   title={K\"{a}hler-Ricci solitons on toric manifolds with positive first Chern
   class},
   journal={Adv. Math.},
   volume={188},
   date={2004},
   number={1},
   pages={87--103},
   issn={0001-8708},
}

\bib{Xia19}{article}{
    AUTHOR = {Xia, Mingchen},
     TITLE = {On sharp lower bounds for Calabi type functionals and destabilizing properties of gradient flows},
  eprint= {arXiv:1901.07889},
      YEAR = {2019},
}

\bib{Xu20}{article}{
    AUTHOR = {Xu, Chenyang},
     TITLE = {A minimizing valuation is quasi-monomial},
   JOURNAL = {Ann. of Math. (2)},
    VOLUME = {191},
      YEAR = {2020},
    NUMBER = {3},
     PAGES = {1003--1030},
}

\bib{Xu20b}{article}{
    AUTHOR = {Xu, Chenyang},
     TITLE = {Toward finite generation of higher rational rank valuations},
      JOURNAL = {Mat. Sb.},
  FJOURNAL = {Matematicheski\u{\i} Sbornik},
    VOLUME = {212},
      YEAR = {2021},
    NUMBER = {3},
     PAGES = {157--174},,
}

\bib{XZ19}{article}{
    AUTHOR = {Xu, Chenyang},
    AUTHOR = {Zhuang, Ziquan},
        TITLE = {On positivity of the {CM} line bundle on {K}-moduli spaces},
   JOURNAL = {Ann. of Math. (2)},
  FJOURNAL = {Annals of Mathematics. Second Series},
    VOLUME = {192},
      YEAR = {2020},
    NUMBER = {3},
     PAGES = {1005--1068},
}

\bib{XZ20}{article}{
    AUTHOR = {Xu, Chenyang},
    AUTHOR = {Zhuang, Ziquan},
     TITLE = {Uniqueness of the minimizer of the normalized volume function},
   JOURNAL = {Cam. J. Math.},
       VOLUME = {9},
      YEAR = {2021},
    NUMBER = {1},
     PAGES = {149-176},
}

\bib{Zhu20}{article}{
    AUTHOR = {Zhuang, Ziquan},
     TITLE = {Optimal destabilizing centers and equivariant K-stability},
   JOURNAL = {Invent. Math. (to appear)}, 
   eprint={arXiv:2004.09413},
      YEAR = {2020},
}

\end{biblist}

\end{document}